\documentclass[twoside]{article}

% \usepackage{aistats2025}
% If your paper is accepted, change the options for the package
% aistats2025 as follows:
%
\usepackage[accepted]{aistats2025}
%
% This option will print headings for the title of your paper and
% headings for the authors names, plus a copyright note at the end of
% the first column of the first page.

% If you set papersize explicitly, activate the following three lines:
%\special{papersize = 8.5in, 11in}
%\setlength{\pdfpageheight}{11in}
%\setlength{\pdfpagewidth}{8.5in}

% If you use natbib package, activate the following three lines:
\usepackage[round]{natbib}

% If you use BibTeX in apalike style, activate the following line:
% \bibliographystyle{apalike}

%%%%%%%%%%%%%%%%%%%%%%%%%%%%%%%%
% Our own packages
%%%%%%%%%%%%%%%%%%%%%%%%%%%%%%%%

\usepackage{graphicx} % support the \includegraphics command and options
\graphicspath{{./Figs/}}

\usepackage[skip=0.333\baselineskip]{caption}
\usepackage{subcaption}
\usepackage{lipsum}
\usepackage[htt]{hyphenat}
\usepackage{url}
\usepackage{enumitem}

% For theorems and such
\usepackage{amsmath}
\usepackage{amssymb}
\usepackage{mathtools}
\usepackage{amsthm}
\usepackage{algorithm}
\usepackage{algorithmic}

\usepackage{csquotes}
\usepackage{comment}

% if you use cleveref..
\usepackage[capitalize,noabbrev]{cleveref}
\crefname{problem}{Problem}{Problems}
\crefformat{problem}{Problem~(#2#1#3)}
\crefrangelabelformat{Problem}{#3(#1)#4 to #5(#2)#6}
\crefname{assumption}{Assumption}{Assumptions}
\usepackage{color}

%%%%%%%%%%%%%%%%%%%%%%%%%%%%%%%%
% THEOREMS
%%%%%%%%%%%%%%%%%%%%%%%%%%%%%%%%
\theoremstyle{plain}
\newtheorem{theorem}{Theorem}[section]
\newtheorem{proposition}[theorem]{Proposition}
\newtheorem{lemma}[theorem]{Lemma}

\theoremstyle{definition}
\newtheorem{definition}[theorem]{Definition}
\newtheorem{assumption}[theorem]{Assumption}
\theoremstyle{remark}

%%shortcuts

% \newtheorem{remark}{Remark}[section]
% \newtheorem{proposition}{Proposition}
% \newtheorem{assumption}{Assumption}[section]
% \newtheorem{lemma}{Lemma}
% \newtheorem{corollary}{Corollary}[section]
% \newtheorem{definition}{Definition}[section]
% \newtheorem{theorem}{Theorem}
% \theoremstyle{remark}

\newcommand{\bbR}{\mathbb{R}}

\newcommand{\bbE}{\mathbb{E}}
\newcommand{\bbP}{\mathbb{P}}

\newcommand{\blambda}{\boldsymbol{\lambda}}

\newcommand{\lambdarv}{\tilde{\lambda}}

\newcommand{\betabar}{\boldsymbol{\bar{\beta}}}
\newcommand{\bbeta}{\boldsymbol{\beta}}
\newcommand{\btheta}{\boldsymbol{\theta}}
\newcommand{\thetahat}{\boldsymbol{\hat{\theta}}}
\newcommand{\bPhi}{\boldsymbol{\Phi}}

\newcommand{\betaavg}{\boldsymbol{\beta}_\mathrm{avg}^\ast}
\newcommand{\betasp}{\boldsymbol{\beta}_{\rm sp}^\ast}
\newcommand{\epsilonsp}{\epsilon_{\rm sp}}

\newcommand{\bv}{\boldsymbol{v}}
\newcommand{\bw}{\boldsymbol{w}}
\newcommand{\bx}{\boldsymbol{x}}

\newcommand{\abs}[1]{\left|#1\right|}
\newcommand{\Ib}[1]{\mathbb I \left\{ #1 \right\} }
\newcommand{\Eb}[1]{\mathbb E \left[ #1 \right]}

\newcommand{\lsim}{\lesssim}

%\DeclareMathOperator*{\argmax}{argmax}
%\DeclareMathOperator*{\argmin}{argmin}

% Vishal's commands, added for response
\newcommand{\blambdarv}{\bm{\tilde \lambda}}
\usepackage{bm}

\begin{document}

% If your paper is accepted and the title of your paper is very long,
% the style will print as headings an error message. Use the following
% command to supply a shorter title of your paper so that it can be
% used as headings.
%
%\runningtitle{I use this title instead because the last one was very long}

% If your paper is accepted and the number of authors is large, the
% style will print as headings an error message. Use the following
% command to supply a shorter version of the authors names so that
% they can be used as headings (for example, use only the surnames)
%
%\runningauthor{Surname 1, Surname 2, Surname 3, ...., Surname n}

\twocolumn[

\aistatstitle{Beyond Discretization: Learning the Optimal Solution Path}

\aistatsauthor{ Qiran Dong \And Paul Grigas \And Vishal Gupta }

\aistatsaddress{ University of California, Berkeley \And  University of California, Berkeley \And University of Southern California } ]

\begin{abstract}
  Many applications require minimizing a family of optimization problems indexed by some hyperparameter $\lambda \in \Lambda$
  to obtain an entire solution path.  
 Traditional approaches proceed by discretizing $\Lambda$ and solving a series of optimization problems.
  We propose an alternative approach
  that parameterizes the solution path with a set of basis functions and solves a \emph{single} stochastic optimization problem to learn the entire solution path.  Our method 
  offers substantial complexity improvements over discretization. When using constant-step size SGD, the uniform error of our learned solution path relative to the true path exhibits linear convergence to a constant related to the expressiveness of the basis. When the true solution path lies in the span of the basis, this constant is zero. We also prove stronger results for special cases common in machine learning: When $\lambda \in [-1, 1]$ and the solution path is $\nu$-times differentiable, constant step-size SGD learns a path with $\epsilon$ uniform error after at most $O(\epsilon^{\frac{1}{1-\nu}} \log(1/\epsilon))$ iterations, and when the solution path is analytic, it only requires $O\left(\log^2(1/\epsilon)\log\log(1/\epsilon)\right)$.  By comparison, the best-known discretization schemes in these settings require at least $O(\epsilon^{-1/2})$ discretization points (and even more gradient calls). Finally, we propose an adaptive variant of our method that sequentially adds basis functions and demonstrate strong numerical performance through experiments.
\end{abstract}

%!TEX root = 0_Main.tex

\section{INTRODUCTION}

Many decision-making applications entail solving a family of \emph{parametrized} optimization problems:
\begin{equation}\label[problem]{eq:ParametricProb}
    \btheta^*(\lambda) \in \arg
     \min_{\btheta\in \bbR^d} h(\btheta, \lambda),\quad \lambda\in\Lambda,
\end{equation} 
where $\Lambda$ is an arbitrary set of parameters indexing the problems. 
% and the optimization admits an optimal solution $\btheta^*(\lambda)$ for each $\lambda \in \Lambda$. 
(We assume \cref{eq:ParametricProb} admits an optimal solution for each $\lambda \in \Lambda$.)  In such applications, we often seek to compute the entire solution path 
$\{\btheta^*(\lambda): \lambda \in \Lambda\}$ in order to present experts with a portfolio of possible solutions to compare and assess tradeoffs.

% one seeks the ``best" hyperparameter $\lambda^*$, i.e., one such that $\btheta^*(\lambda^*)$ minimizes some auxiliary criterion, e.g., out-of-sample performance.  By contrast, when there is no obvious auxiliary criterion, we might prefer to compute the
% entire solution path $\{\btheta^*(\lambda): \lambda \in \Lambda\}$ and present experts with a ``portfolio" of possible solutions which they can compare qualitatively to assess tradeoffs.  In this paper, we focus this second question of  computing this entire solution path efficiently.

% In these settings, we typically seek the entire solution path $\{\btheta^*(\lambda): \lambda \in \Lambda\}$ for a variety of reasons, e.g., so that experts might compare solutions qualitatively to assess tradeoffs.
%In other words, we need to solve $P(\lambda)$ for all $\lambda \in \Lambda$. 

%\qrd{@Paul: What's the written comment saying here?}\paul{I wonder if we should use the phrase ``portfolio'' (I think I'm borrowing this from Swati) or something similar to sell the idea that decision-makers often need to be presented with a whole portfolio of potential solutions.} 
As an example, consider the $p$-norm fair facility location problem \citep{gupta2023lp}, which minimizes facility opening costs while incorporating $l_p$-regularization to promote fairness across socioeconomic groups. Since there is no obvious choice for $p$, we might prefer computing solutions for all $p$ and allowing experts to (qualitatively) assess the resulting solutions.  %Similar issues often arise in other fair resource allocation problems.
%
%
% As a second example, consider
% binary classification with imbalanced classes \citep{johnson2019survey}. A standard approach is to upweight the minority class, but it is not apriori obvious what the right weight should be.  Different choices induce different tradeoffs between Type I and Type II error and the real-world costs associated with these errors may be hard to assess.  Hence, we might prefer to train a set of classifiers across all weightings and compare them across \emph{multiple} performance metrics (precision, explainability, implementation cost, etc.).
Many other applications 
entail navigating similar tradeoffs, including %where 
%there is no clear ``best" answer and hence 
%we would prefer to learn the entire solution path: 
upweighting the minority class in binary classification to tradeoff Type I and II errors 
%\citep{johnson2019survey} 
or
%demographic classes to improve the within-group accuracy at the expense of overall accuracy, 
aggregating features to increase interpretability at the expense of accuracy.
%\citep{yan2021rare}.
In each case, we seek the entire solution path because 
selecting the ``best" solution requires weighing a variety of criteria, some of which may be qualitative.
These settings differ from classical hyperparameter tuning where there is a clear auxiliary criterion (like out-of-sample performance), and it would be enough to identify the single $\lambda^* \in \Lambda$ such that $\btheta^*(\lambda^*)$  optimizes this criteria.

The most common approach to learning the entire solution path is discretization: discretize $\Lambda$, solve \cref{eq:ParametricProb} at each grid point, and interpolate the resulting solutions.  With enough grid points, interpolated solutions are approximately optimal along the entire path.  Several authors have sought to determine the minimal the number of discretization points needed to achieve a target level of accuracy, usually under minimal assumptions on $h(\btheta, \lambda)$.  
%(See \cref{sec:OtherRelatedWork} below for other related work that focuses on specific instances of $h(\cdot, \cdot)$.)  
%\vg{@Qiran: You cite this paper here originally: \citep{ndiaye2017gap}.  I don't see how this paper (2017) is related. Was it a typo?  Did you mean the 2019 paper?}
%The complexity of these discretization methods then depends on the minimum number of partitions required for $\Lambda$, as well as the computational complexity of each subproblem. 
% \qrd{@Paul: this part looks like it's in a right place to me. Maybe I'm missing something?}\paul{Pascal's paper has nothing to do with discretization.} 
\citet{giesen2012approximating} considers convex optimization problems over the unit simplex when $\Lambda \subseteq \mathbb R$ and show that learning the solution path to accuracy $\epsilon$ requires at least $O(1/\epsilon)$ grid points. \citet{giensen2012approximating} consider the case where $h(\btheta, \lambda)$ is concave in $\lambda$ and $\Lambda \subseteq \mathbb R$ and show only $O(1/\sqrt{\epsilon})$ points are needed.   
% proposed the first \emph{absolute} $\epsilon$-approximation of the solution path assuming $P(\lambda)$ is concave in $\lambda \in \Lambda \subseteq \mathbb R$, requiring $O(\epsilon^{-1/2})$ grid points. \vg{@Qiran: What is $P(\lambda)$ in the previous statement?}
More recently, \citet{ndiaye2019safe} relate the required number of points to the regularity of $h(\btheta, \lambda)$, arguing that if $h$ is uniformly convex of order $d$, one requires 
$O(1/\sqrt[d]{\epsilon})$ points. 

A potential criticism of this line of research is that the computational complexity of these methods depends not only on the number of grid points but also depends on the amount of work per grid point, e.g., as measured by the number of gradient calls used by a first-order method.  Generally speaking, these methods do not share much information across grid points, at most warm-starting subsequent optimization problems.  
However, gradient evaluations at nearby grid points contain useful information for optimizing the current grid point, and leveraging this information presents an opportunity to reduce the total work. 

\subsection{Our Contributions}

We propose a novel, simple algorithmic procedure to learn the solution path that applies to an arbitrary set $\Lambda$.
%
% Assume the parametric optimization problem \eqref{eq:ParametricProb} admits an optimal solution $\btheta^*(\lambda)$ for each $\lambda \in \Lambda$.
% Formally define the solution path $\btheta^*(\cdot): \Lambda \mapsto \bbR^d$ by 
% \begin{equation} \label[problem]{eq:ParametricProb}
%     \btheta^*(\lambda) \in \arg\min_{\btheta \in \bbR^d} \ h(\btheta, \lambda), \quad \text { for all } \lambda\in\Lambda,
% \end{equation}
% where ties are broken arbitrarily.
%Assuming an optimal solution of $P(\lambda)$ exists for all $\lambda\in\Lambda$, let $\btheta^*(\cdot): \Lambda \mapsto \bbR^d$ be a function that returns an arbitrary optimal solution, i.e.,
%
%In reality, the exact solution to this problem is often not obtainable within reasonable computation cost\qrd{example}. Instead, we may want to take the second best option available, which is finding an $\epsilon$-solution path, defined as follows.
% \begin{definition}
%     Given $\epsilon > 0$, an \emph{$\epsilon$-solution path} $\thetahat(\cdot): \Lambda \mapsto \bbR^d$ is a function such that 
%     \[
%     h(\thetahat(\lambda), \lambda) - h(\btheta^*(\lambda), \lambda) \leq \epsilon, \quad \text { for all } \lambda\in\Lambda.
%     \]
% \end{definition}
The key idea is to replace the family of problems in \eqref{eq:ParametricProb} with a {\em single} stochastic optimization problem. This stochastic optimization problem depends on two user-specified components: a distribution $\bbP_\lambda$ over values of $\lambda \in \Lambda$ and a collection of basis functions $\bPhi_j(\cdot): \Lambda \to \bbR^{d}$, $j=1, \ldots, p$.  We then seek to approximate $\btheta^*(\lambda)$ as a linear combination of basis functions, $\bPhi_{1:p}(\lambda)\hat{\bbeta}$,  where 
$\bPhi_{1:p}:= \left[\bPhi_1\ \bPhi_2\ \cdots\ \bPhi_p\right]$ and $\hat{\bbeta}$ is an approximate solution to

% $\bPhi_{1:p}(\lambda) \in $ os the matrix whose $j^\text{th}$ column is $\bPhi_j(\lambda)$, i.e. . 

% Specifically, let $\lambdarv$ be a user-defined random variable with distribution $D$ and let $\bPhi_i(\lambda) \in \bbR^d$ for $i =1, \ldots, p$ be user-defined, vector-valued functions of $\lambda$.  Let $\bPhi(\lambda) \in \bbR^{p \times d}$ be the matrix whose $i^\text{th}$ row equals to $\bPhi_i(\lambda)$. 
%Let $\bbeta \in \bbR^p$ be a vector of coefficients on the basis functions $\bPhi_i(\cdot)$. 
% Our goal is to approximate $\btheta^*(\cdot)$ as a linear combination of basis functions. 
% We formalize the error of a given linear combination in Definition \ref{def:sp_error} below.
% \begin{definition}\label{def:sp_error}
% For any $\bbeta \in \bbR^p$, we define the {\em solution path error} of $\bbeta$ as
% \begin{equation*}
% \epsilonsp(\bbeta) := \sup_{\lambda\in \Lambda}\left\{h(\bPhi(\lambda) \bbeta, \lambda) - h(\btheta^*(\lambda), \lambda)\right\}.
% \end{equation*}
% \end{definition}
\begin{equation}\label[problem]{eq:StochasticProb}
    \min_{\bbeta \in \bbR^p} \  \bbE_{\lambdarv\sim\mathbb{P}_\lambda}\left[h(\bPhi_{1:p}(\lambdarv) \bbeta, \lambdarv)\right].
\end{equation}

In contrast to discretization schemes which only leverage local information, (stochastic) gradient evaluations of \cref{eq:StochasticProb} inform \emph{global} structure.
Moreover, through a suitable choice of basis functions, we can naturally accomodate complex sets $\Lambda$
 in contrast to earlier work that only treats $\Lambda \subseteq \mathbb R$.
 %, without suffering the curse of dimensionality \citep{gjm-elbpcrp-12}. 
 Finally, any stochastic optimization routine can be used on \cref{eq:StochasticProb} beyond just SGD (see, e.g., \citet{lan2020first, bottou2018optimization}, among others).
 % including ADAM, trust-region or accelerated methods.

\begin{figure}[hbt!] 
\begin{center}
\includegraphics[width=.35\textwidth]{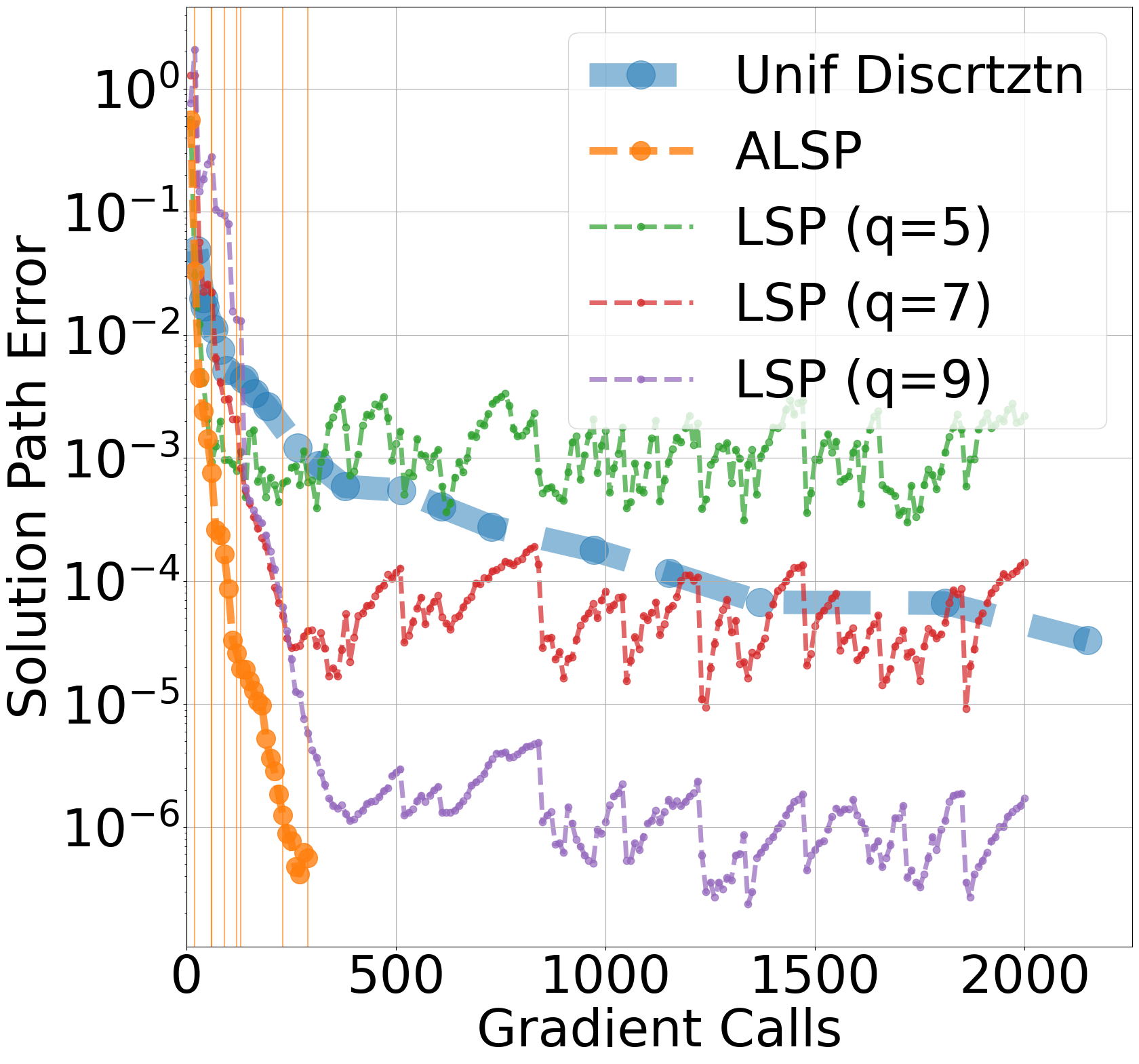}
\end{center}
\caption{\textbf{Learning Solution Path of Weighted Binary Classification.} See \cref{sec:experiment} for setup. We compare our method with $q = 5, 7, 9$ \emph{Legendre} polynomials as our basis with uniform discretization.  Orange line is our adaptive method (c.f. \cref{alg:adaptiveSgd}). 
 Vertical lines indicate when new basis functions are added.
\label{fig:preview}}
\end{figure}

Despite its simplicity, our approach can approximate the solution path to higher accuracy 
than discretization with fewer gradient evaluations.  See \cref{fig:preview} for a sample of our numerical results on weighted binary classification using SGD to solve \cref{eq:StochasticProb}.  
We prove this behavior is typical.   Loosely, 
\begin{enumerate}[label= \roman*)]
    \item \vskip -10pt When using constant step-size SGD to solve \cref{eq:StochasticProb}, we prove that 
the uniform error of our learned path $\bPhi_{1:p}(\lambda)\hat{\bbeta}$ to the true path $\btheta^*(\lambda)$ exhibits linear convergence to an irreducible constant that is proportional to the expressiveness of the basis (\cref{thm:exact1}).  This behavior is already visible in \cref{fig:preview}.
\end{enumerate}
\vskip -10pt
The proof of this result utilizes ideas from the convergence of SGD under various ``growth conditions" of the gradient \citep{bottou2018optimization, nguyen2018sgd, vaswani2019fast, liu2024aiming,bertsekas1996neuro}. See \citet{khaled2020better} for a summary and comparison of these various conditions. Our contribution to this literature is to relate the expressiveness of the basis $\bPhi_{1:p}(\lambda)$ to a relaxed, weak-growth condition of \citet[Lemma 2.4]{gower2019sgd}.  Indeed, we show that under some assumptions, \cref{eq:StochasticProb} always satisfies this relaxed weak-growth condition, and if the solution path lies in the span of the basis, it satisfies the weak-growth condition of \cite{vaswani2019fast}.  This allows us to leverage results from those works to establish \cref{thm:exact1}.

% \citet{vaswani2019fast} \qrd{\citet{khaled2020better} gives a nice summary of the growth condition concept along with comparison between ``strong/weak'' and ``relax''. Relaxed growth condition first appeared in Propsition 4.2 of \citep{}. A similar notion to our RWGC was used in \citet{}  as a direct consequence of expected smoothness.}:

% The proof requires generalizing results on the convergence of SGD in an interpolable regime (\cite{vaswani2019fast}) to a ``nearly" interpolable setting (\cref{sec:rwgc}).

In special cases, we can leverage a priori knowledge of the structure of $\btheta^*(\lambda)$ to prove stronger results.  For example, suppose $\Lambda = [-1,1]$, and we use Legendre polynomials as our basis. We prove that
\begin{enumerate}[resume,label= \roman*)]
    \item \vskip -10pt If the solution path $\btheta^*(\lambda)$ is $\nu$-times differentiable, then using $p = O\left(\epsilon^{\frac{1}{2(1-\nu)}}\right)$ 
    %\qrd{$O(d(\frac{dLV^2}{\nu^2})^{1/2\nu}((1+2\bar\eta/\mu)/\epsilon)^{1/2\nu} + d\nu)$}
    polynomials ensures that
    after $T = O\left(
    \epsilon^{\frac{1}{1-\nu}}\log(1/\epsilon)
    \right)$ gradient calls, we obtain an $\epsilon$-approximation to the solution path (\cref{thm:SGDnuDifferentiable}).
 % \qrd{$T = O((L/\mu)((\frac{dLV^2}{\nu^2})^{1/2\nu}((1+2\bar\eta/\mu)/\epsilon)^{1/2\nu} + \nu)^2 \log((1+2\bar\eta/\mu)/\epsilon))$}  
\end{enumerate}
\vskip -10pt
For comparison, \citet{ndiaye2019safe} implies that when $h(\btheta, \lambda)$ is strongly-convex, discretization requires at least $O(\epsilon^{-1/2})$ points. Hence, even if the optimization problem at each grid point can be solved with $1$ gradient evaluation, our approach requires asymptotically fewer evaluations whenever $\nu > 3$.  If $\nu$ is large, the savings is substantive.

\begin{enumerate}[resume,label= \roman*)]
    \item \vskip -10pt If the solution path $\btheta^*(\lambda)$ is analytic ($\nu = \infty$), then using $p = O( \log(1/\epsilon))$ basis polynomials ensures that after $T=O(\log^2(1/\epsilon) \log\log(1/\epsilon))$ iterations, we obtain an $\epsilon$-approximation to the solution path (\cref{thm:SGDforAnalytic}).
    % \vg{@Qiran:What constants are hidden here?}  
    % \qrd{$p = O((1/\log \omega) d \log(((1+2\bar\eta/\mu)/\epsilon)(dL)(M/(\omega-1))^2)$}
    % \qrd{$T = (L/\mu) $}
\end{enumerate}
\vskip -10pt
This is almost \emph{exponentially} less work than discretization for unidimensional hyperparameter space. 

These specialized results strongly suggest that in the general setting our approach should be competitive as long as we use enough basis functions.  
%Of course, in general, convergence to an irreducible constant is somewhat ``unsatisfying."  
%Simple discretization can recover the $\btheta^*(\lambda)$ to any accuracy with enough grid points.  
This observation motivates a natural heuristic in which we adaptively add basis functions whenever the stochastic optimization routine (e.g., SGD) stalls (see \cref{alg:adaptiveSgd} and \cref{sec:Adaptive} for details).  
We illustrate this idea in \cref{fig:preview} (orange line) where we can see that by progressively adding functions we drive the uniform error to zero rapidly.

\begin{algorithm}[tb]
\caption{Adaptively Learn the Solution Path (ALSP)}
\begin{algorithmic}[1]\label{alg:adaptiveSgd}
\STATE \textbf{Initialize:} $\bPhi_{1:0}(\cdot) \gets [\ ]$ and $\hat{\bbeta}_0  \gets [\ ]$
\STATE \textbf{Return:} A sequence of coefficients: $\hat{\bbeta}_1, \hat{\bbeta}_2, \ldots$
\vskip 5pt
\STATE {\bf At iteration $p = 1, 2, \ldots$ do}
\STATE \quad Append a new basis function: \vspace{-5pt}
    \[
    \bPhi_{1:p}(\cdot) \gets \left[\bPhi_{1:{p-1}}(\cdot)\quad \bPhi_p(\cdot)\right]
    \]
\STATE \quad Run a first-order method, starting from 
\STATE \quad $[\hat{\bbeta}_{p-1} , 0] \in \bbR^{p}$, to obtain 
    % With $[\hat{\bbeta}_{p-1} , 0] \in \bbR^{p}$ as the initialization, use a \INDSTATE stochastic first-order method to obtain
\vspace{-5pt}
    \begin{equation*}
        \hat{\bbeta}_p 
        ~\approx~
        \arg\min_{\bbeta \in \bbR^p} \  \bbE_{\lambdarv\sim\mathbb{P}_\lambda}\left[h(\bPhi_{1:p}(\lambdarv) \bbeta, \lambdarv)\right].
    \end{equation*}  
\end{algorithmic}
\end{algorithm}

\subsection{Other Related Work}
\label{sec:OtherRelatedWork}

There is a rich literature on specialized methods for computing the solution path under \emph{regularization}, i.e., where $\lambda \in \mathbb R_+$ is the weight on a convex regularizer.  These methods generally employ a path-following algorithm.  See, for example, \citet{rosset2004following,friedman2010regularization,10.1111/j.1467-9868.2007.00607.x} for cases where $\btheta^*(\lambda)$ is smooth, and \citet{rosset2007piecewise} for when it is piecewise linear.  The LASSO, or $\ell_1$ regularized regression, has received special emphasis \citep{osborne2000new,efron2004least,10.1214/11-AOS878},  as has support vector machines \citep{hastie2004entire} and certain structured regularizers \citep{bao2019efficient}.  \citet{liu2023new} adopts a slightly different perspective than this existing literature, using ordinary differential equations to motivate a second-order method to compute the path. Overall, like our work, these works generally consider the total computational time to compute the path, not just the number of discretization points.

However, our work differs from this literature in two important ways.  First, our proposed method applies to general $\lambda \in\Lambda$ which may be multidimensional, whereas the previous literature largely focuses on the case of $\lambda \in \mathbb R_+$. This limitation of the previous literature is perhaps fundamental as \emph{path}-following algorithms do not generalize easily to multidimensional settings.  
%(We \emph{do} present specialized results for our method in the unidimensional case to facilitate comparison to this literature.) 
Second, we treat a general function $h(\cdot, \cdot)$, not just the case of regularization. Despite our generality, our paper performs a relatively precise computational analysis by bounding total computation time in terms of the number of gradient calls, i.e., we treat the gradient as a block-box oracle instead of a more complicated black-box consisting of minimizing the objective $h(\cdot, \cdot)$ at fixed parameter values.
Furthermore, our numerical experiments go beyond simple regularization examples that are common in the literature by considering the upweighing of a minority class in classification and weighting different components of a multi-objective optimization problem.  Thus, our method applies more broadly than the aforementioned works.

%!TEX root = 0_Main.tex

\section{MODEL SETUP AND PRELIMINARIES}
\label{sec:Model}
We denote the $\ell_2$-norm by $\|\cdot\|$ throughout.
We focus on the case of smooth, strongly convex functions: 
\begin{assumption}[{\bf Uniform Smoothness and Strong Convexity}]\label{assu:regularity}
There are constants $0 < \mu \leq L$ such that, for all $\lambda \in \Lambda$, $h(\cdot, \lambda)$ is $\mu$-strongly convex and $L$-smooth, i.e., for all $\btheta, \bar\btheta \in \bbR^d$,
\begin{equation*}
\frac{\mu}{2}\|\btheta - \bar\btheta\|^2 ~\leq~ h(\btheta) - h(\bar\btheta) - \nabla h(\bar\btheta)^\top(\btheta - \bar\btheta) ~\leq~ \frac{L}{2}\|\btheta - \bar\btheta\|^2.
\end{equation*}
\end{assumption}

For any candidate solution path $\btheta(\cdot)$, we define the \emph{solution path error} of $\btheta(\cdot)$ by 
\begin{equation*}
    \epsilonsp(\btheta(\cdot)) := \sup_{\lambda\in \Lambda}\left\{h(\btheta(\lambda), \lambda) - h(\btheta^*(\lambda), \lambda)\right\}.
\end{equation*}
An \emph{$\epsilon$-solution path} is a solution path $\btheta(\cdot)$ such that $\epsilonsp(\btheta(\cdot)) < \epsilon$.  Finally, given any vector of coefficients $\bbeta$ and basis functions $\bPhi_{1:p}(\cdot)$, we define 
$\epsilonsp(\bbeta) := \epsilonsp(\bPhi_{1:p}(\cdot) \bbeta)$
to be the solution path error of $\bPhi_{1:p}(\cdot)\bbeta$.

% Thus, if $\epsilonsp(\hat{\bbeta}) \leq \epsilon$ for some $\hat{\bbeta}$, then $\thetahat(\cdot)$, defined by $\thetahat(\lambda) := \bPhi_{1:p}(\lambda)\hat{\bbeta}$, is an $\epsilon$-solution path.  

% \begin{definition}\label{def:sp_error}
%     ({\bf Solution Path Error and $\epsilon$-Solution Path}).
% \begin{enumerate}
%     \item For any solution path $\btheta(\cdot): \Lambda \mapsto \bbR^d$, we define the {\em solution path error} of $\btheta(\cdot)$ as
%     \begin{equation*}
%     \epsilonsp(\btheta(\cdot)) := \sup_{\lambda\in \Lambda}\left\{h(\btheta(\lambda), \lambda) - h(\btheta^*(\lambda), \lambda)\right\}.
% \end{equation*}

%     \item An \emph{$\epsilon$-solution path} $\thetahat(\cdot): \Lambda \mapsto \bbR^d$ is a solution path such that $\epsilonsp(\thetahat(\cdot)) < \epsilon$.
% \end{enumerate}

As mentioned, our method depends on two user-chosen parameters: a distribution $\mathbb P_\lambda$ over $\lambda \in \Lambda$ and a series of basis functions $\bPhi_j:\Lambda \mapsto \mathbb R^d$, for $j=1, \ldots, p$.  We require some minor assumptions on these choices:
\begin{assumption}\label{assum:components}
    ({\bf Distribution, Basis Functions, and Linear Independence}).
\begin{enumerate}[label = \roman*), itemsep=0pt]
    \item \vskip -10pt It is easy to generate i.i.d. samples from $\mathbb P_\lambda$.
    \item It is easy to compute $\bPhi_j(\lambda) \in \bbR^d$ for any $\lambda \in \Lambda$, and $1\leq j \leq p$.
    
    \item There does not exist 
    $\bbeta \in \bbR^p$ with $\bbeta \neq 0$ such that $\bPhi_{1:p}(\lambdarv)\bbeta = \boldsymbol 0$ holds $\mathbb{P}_\lambda$-almost everywhere.
\end{enumerate}
\end{assumption}
\vskip -10pt
This last condition is a linear independence assumption.  If violated, one can remove a function without affecting the span of the basis on the support of $\mathbb P_{\lambda}$.

Given $\bPhi_{1:p}$, we define the minimal  solution path error for this basis by
\begin{equation*}
    \epsilonsp^\ast := \inf_{\bbeta \in \bbR^p}\epsilonsp(\bbeta).
\end{equation*}
We also define the following auxiliary constants:
\begin{align}\label{def:bigClittleC}
    C &~:=~ \sup_{\lambda\in \Lambda} \sigma_{\max}\left(\bPhi(\lambda)^\top \bPhi(\lambda)\right),
\\  \notag
    c &~:=~ \sigma_{\min}\left\{\bbE_{\lambdarv \sim \mathbb{P}_\lambda}\left[\bPhi(\lambdarv)^\top \bPhi(\lambdarv)\right]\right\}.
\end{align}

%%%%%%%%%%%%%%%%%%%
%%%%%%%%%%%%%%%%%%%
%%%%%%%%%%%%%%%%%%%
In words, $C$ is a uniform bound on the largest eigenvalue of the positive semidefinite matrix $\bPhi(\lambda)^\top \bPhi(\lambda)$, and $c$ is the smallest eigenvalue of the corresponding expected matrix.
% are bounds on the largest and smallest \emph{eigenvalues} of the feature covariance matrix and its expectation respectively. 
By construction, $0 \leq c \leq C$.
% $\left(\bPhi(\lambda)^\top \bPhi(\lambda)\right)$ and $\bbE_{\lambdarv}\left[\bPhi(\lambdarv)^\top \bPhi(\lambdarv)\right]$ are positive semidefinite. 
Under \cref{assum:components}, both constants are strictly positive. 

\begin{lemma}[{\bf Positive Spectral Values}]\label{lma:cPositive}
If \cref{assum:components} holds, then $0 < c \leq C$.
\end{lemma}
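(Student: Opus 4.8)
The plan is to recast everything as statements about Rayleigh quotients of the two symmetric positive semidefinite matrices $\bPhi(\lambda)^\top \bPhi(\lambda)$ (for fixed $\lambda$) and $\bM := \bbE_{\lambdarv\sim\mathbb{P}_\lambda}\!\left[\bPhi(\lambdarv)^\top \bPhi(\lambdarv)\right]$, and then invoke the linear independence part of \cref{assum:components}. Since both matrices are symmetric and PSD, their extreme singular values coincide with their extreme eigenvalues; in particular $c = \sigma_{\min}(\bM) = \min_{\|\bbeta\|=1}\bbeta^\top \bM \bbeta$, $\sigma_{\max}(\bM) = \max_{\|\bbeta\|=1}\bbeta^\top \bM \bbeta$, and $\sigma_{\max}(\bPhi(\lambda)^\top \bPhi(\lambda)) = \max_{\|\bbeta\|=1}\|\bPhi(\lambda)\bbeta\|^2$. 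The one identity that does all the work is $\bbeta^\top \bM \bbeta = \bbE\!\left[\|\bPhi(\lambdarv)\bbeta\|^2\right]$ for every $\bbeta \in \bbR^p$, which follows from linearity of expectation. Nonnegativity $0 \le c$ is then immediate, since each $\bPhi(\lambdarv)^\top\bPhi(\lambdarv)$ is PSD.

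For the inequality $c \le C$: fix any unit vector $\bbeta$. Then $\bbeta^\top \bM \bbeta = \bbE\!\left[\|\bPhi(\lambdarv)\bbeta\|^2\right] \le \sup_{\lambda\in\Lambda}\|\bPhi(\lambda)\bbeta\|^2 \le \sup_{\lambda\in\Lambda}\sigma_{\max}\!\left(\bPhi(\lambda)^\top\bPhi(\lambda)\right) = C$. Maximizing the left side over unit $\bbeta$ gives $\sigma_{\max}(\bM) \le C$, and hence $c = \sigma_{\min}(\bM) \le \sigma_{\max}(\bM) \le C$. (This also trivially covers any degenerate case $C = +\infty$; note that $\bM$ has finite entries whenever $C < \infty$, which is exactly what makes $c$ well defined.)

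For the strict positivity $c > 0$: let $\bbeta^\ast$ be a unit eigenvector of the symmetric matrix $\bM$ associated with its smallest eigenvalue, so that $(\bbeta^\ast)^\top \bM \bbeta^\ast = c$. Suppose toward a contradiction that $c = 0$. Then $\bbE\!\left[\|\bPhi(\lambdarv)\bbeta^\ast\|^2\right] = 0$, and since the integrand is nonnegative we conclude $\|\bPhi(\lambdarv)\bbeta^\ast\|^2 = 0$, i.e.\ $\bPhi(\lambdarv)\bbeta^\ast = \boldsymbol 0$, $\mathbb{P}_\lambda$-almost everywhere. But $\bbeta^\ast \neq 0$, which directly contradicts \cref{assum:components}(iii). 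Therefore $c > 0$, which together with the previous paragraph yields $0 < c \le C$.

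The only genuinely delicate step is the measure-theoretic one — that the expectation of a nonnegative random variable vanishes only if the random variable is zero almost surely — combined with ensuring the chosen minimizer $\bbeta^\ast$ is nonzero so that \cref{assum:components}(iii) can be applied; everything else is routine bookkeeping with Rayleigh quotients and linearity of expectation. A minor side point is the well-definedness of $\bM$, but this is not an obstacle: the statement of the lemma already presupposes $c$ (hence $\bM$) is defined, and finiteness of $\bM$ is automatic when $C < \infty$.
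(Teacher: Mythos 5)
Your proof is correct and follows essentially the same route as the paper's: the strict positivity $c>0$ is obtained by exactly the same contradiction argument (a nonzero eigenvector of the smallest eigenvalue would force $\bPhi(\lambdarv)\bbeta^\ast=\boldsymbol 0$ almost everywhere, violating \cref{assum:components}(iii)). The only cosmetic difference is in the $c\le C$ step, where you bound the Rayleigh quotient pointwise by the supremum rather than invoking Jensen's inequality on the convex function $\sigma_{\max}$ as the paper does; both yield the same chain $c\le\sigma_{\max}(\bM)\le C$.
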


We can now state our first key result which relates the suboptimality of a feasible solution $\bbeta$ in \cref{eq:StochasticProb} to its solution path error.  Define 
\begin{equation*} 
    \betaavg ~\in ~\arg\min_{\bbeta \in \bbR^p} \  \bbE_{\lambdarv\sim\mathbb{P}_\lambda}\left[h(\bPhi(\lambdarv) \bbeta, \lambdarv)\right].
\end{equation*}

\begin{theorem}[{\bf Relating Suboptimality to Solution Path Error}]\label{thm:decompose}
Under \cref{assum:components,assu:regularity}, for any $\bbeta \in \bbR^p$, we have
\begin{equation*}
\epsilonsp(\bbeta) ~\leq~ 2CL\|\bbeta - \betaavg\|^2 + \left(\frac{8CL}{c\mu}\right)\epsilonsp^\ast.
\end{equation*}
\end{theorem}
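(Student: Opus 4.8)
The plan is to avoid comparing an arbitrary $\bbeta$ directly to the true path $\btheta^*(\lambda)$, and instead to route through two intermediate points: the population minimizer $\betaavg$ and a coefficient vector $\betasp$ that (nearly) attains the minimal path error, i.e.\ $\epsilonsp(\betasp)\le\epsilonsp^\ast+\eta$ for an arbitrarily small $\eta>0$. (Sending $\eta\to0$ at the very end handles the possibility that the infimum defining $\epsilonsp^\ast$ is not attained; if $\epsilonsp^\ast=\infty$ the bound is vacuous.) The two quantities I would control are the coefficient distance $\|\betaavg-\betasp\|$ and the pointwise suboptimality $h(\bPhi(\lambda)\betasp,\lambda)-h(\btheta^*(\lambda),\lambda)$, each bounded by $\epsilonsp^\ast$ up to problem constants.

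\textbf{Step 1 (control $\|\betaavg-\betasp\|$).} Let $F(\bbeta):=\bbE_{\lambdarv}[h(\bPhi(\lambdarv)\bbeta,\lambdarv)]$ be the population objective of \cref{eq:StochasticProb}. Pushing the $\mu$-strong-convexity inequality for $h(\cdot,\lambda)$ through the linear map $\bPhi(\lambdarv)$ and taking expectations shows $F$ is $\mu c$-strongly convex, with $c>0$ by \cref{lma:cPositive}. Since $\betaavg$ minimizes $F$, strong convexity gives $\tfrac{\mu c}{2}\|\betasp-\betaavg\|^2\le F(\betasp)-F(\betaavg)$. To bound the right side, observe that $h(\btheta^*(\lambda),\lambda)\le h(\bPhi(\lambda)\betaavg,\lambda)$ pointwise, so $\bbE_{\lambdarv}[h(\btheta^*(\lambdarv),\lambdarv)]\le F(\betaavg)$, whence $F(\betasp)-F(\betaavg)\le F(\betasp)-\bbE_{\lambdarv}[h(\btheta^*(\lambdarv),\lambdarv)]=\bbE_{\lambdarv}[h(\bPhi(\lambdarv)\betasp,\lambdarv)-h(\btheta^*(\lambdarv),\lambdarv)]\le\epsilonsp(\betasp)$. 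Therefore $\|\betasp-\betaavg\|^2\le\tfrac{2}{\mu c}\epsilonsp(\betasp)$.

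\textbf{Step 2 (pointwise smoothness and the triangle inequality).} Fix $\lambda$. Since $\btheta^*(\lambda)$ is an unconstrained minimizer, $\nabla_\btheta h(\btheta^*(\lambda),\lambda)=0$, so $L$-smoothness gives $h(\btheta,\lambda)-h(\btheta^*(\lambda),\lambda)\le\tfrac{L}{2}\|\btheta-\btheta^*(\lambda)\|^2$ for every $\btheta$; likewise $\mu$-strong convexity gives $\|\btheta-\btheta^*(\lambda)\|^2\le\tfrac{2}{\mu}\bigl(h(\btheta,\lambda)-h(\btheta^*(\lambda),\lambda)\bigr)$. Taking $\btheta=\bPhi(\lambda)\bbeta$ and splitting through $\bPhi(\lambda)\betaavg$ and $\bPhi(\lambda)\betasp$: the first piece satisfies $\|\bPhi(\lambda)(\bbeta-\betaavg)\|\le\sqrt{C}\|\bbeta-\betaavg\|$ by the definition of $C$; the second $\|\bPhi(\lambda)(\betaavg-\betasp)\|\le\sqrt{C}\|\betaavg-\betasp\|\le\sqrt{2C\epsilonsp(\betasp)/(\mu c)}$ by Step 1; and the third $\|\bPhi(\lambda)\betasp-\btheta^*(\lambda)\|\le\sqrt{2\epsilonsp(\betasp)/\mu}\le\sqrt{2C\epsilonsp(\betasp)/(\mu c)}$ by the strong-convexity bound above together with $c\le C$.

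\textbf{Step 3 (assemble).} Adding the three pieces, $\|\bPhi(\lambda)\bbeta-\btheta^*(\lambda)\|\le\sqrt{C}\|\bbeta-\betaavg\|+2\sqrt{2C\epsilonsp(\betasp)/(\mu c)}$; squaring via $(a+b)^2\le 2a^2+2b^2$, substituting into the smoothness bound of Step 2, and taking $\sup_{\lambda\in\Lambda}$ gives $\epsilonsp(\bbeta)\le CL\|\bbeta-\betaavg\|^2+\tfrac{8CL}{\mu c}\epsilonsp(\betasp)$; sending $\eta\to0$ (so $\epsilonsp(\betasp)\downarrow\epsilonsp^\ast$) and weakening $CL$ to $2CL$ yields the stated inequality. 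I expect Step 1 to be the crux: $\betaavg$ is only optimal \emph{in expectation}, and the argument hinges on showing it nonetheless sits within $O(\sqrt{\epsilonsp^\ast})$ of a vector that is near-optimal \emph{uniformly} — which is exactly where strong convexity of the population objective, hence $c>0$ and the linear-independence part of \cref{assum:components}, is indispensable. The remaining steps are routine manipulations with the smoothness/strong-convexity sandwich and the triangle inequality.
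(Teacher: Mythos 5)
Your proof is correct and follows essentially the same route as the paper's: decompose through $\betaavg$ and $\betasp$, use $L$-smoothness at $\btheta^*(\lambda)$ to pass to squared distances, and control $\|\bPhi(\lambda)\betasp-\btheta^*(\lambda)\|$ via strong convexity of $h$ and $\|\betaavg-\betasp\|$ via strong convexity of $F$. The only (harmless) differences are that you use a near-minimizer with $\eta\to0$ where the paper proves $\epsilonsp^\ast$ is attained via a Weierstrass argument, and your $(a+b)^2\le 2a^2+2b^2$ squaring actually yields the slightly sharper constant $CL$ on the first term before you weaken it to match the stated bound.
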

This bound decomposes into two terms, one proportional to $\|\bbeta - \betaavg\|^2$, which represents the suboptimality of $\bbeta$
and the other proportional to $\epsilonsp^\ast$, which measures the maximal expressiveness of the basis.  By solving \cref{eq:StochasticProb} to greater accuracy, we can drive down the first term, but we will not affect the second term.  To reduce the second term, we must add basis functions to obtain a better quality approximation.

\Cref{thm:decompose} shows any algorithm for solving \cref{eq:StochasticProb} can be used to approximate the solution path.  In the next section, we argue that when $\epsilonsp^\ast$ is small, constant step-size SGD for \cref{eq:StochasticProb} exhibits linear convergence to a constant proportional to $\epsilonsp^*$, making it an ideal algorithm to study.

\section{SGD TO LEARN THE SOLUTION PATH}\label{sec:convGuaranteeSGD}

% In this section, we show that stochastic gradient descent (SGD) nearly meets the convergence requirements outlined in \cref{prop:adaptiveConv} part 2 when applied to our ``stochastic counterpart'' problem \eqref{eq:StochasticProb}. This offers valuable insight into the effectiveness of ALSP.

In this section, we apply constant step-size SGD to solve \cref{eq:StochasticProb}.  The key idea is 
to show that \cref{eq:StochasticProb} satisfies a certain growth condition and apply \citet[Theorem 3.1]{gower2019sgd}. One minor detail is that \citet{gower2019sgd} proves their result under a stronger ``expected smoothness" condition for the setting where the objective \cref{eq:StochasticProb} is a finite sum.  However, the result holds more generally under a weaker condition (Eq. (9) of their work).  Hence, for clarity, we first restate this condition and the more general result.

\label{sec:rwgc}
% \paul{This subsection has several notation inconsistencies with the rest of the paper.}
% Before presenting our main result, we first present a standalone result on the performance of SGD when the objective function is nearly interpolable.  This result is an extension of \cite{vaswani2019fast} and underlies our main result in Section 3 and 4.

% \cref{def:rwgc} below generalizes the weak growth condition of  \citet{vaswani2019fast} \qrd{\citet{khaled2020better} gives a nice summary of the growth condition concept along with comparison between ``strong/weak'' and ``relax''. Relaxed growth condition first appeared in Propsition 4.2 of \citep{bertsekas1996neuro}. A similar notion to our RWGC was used in \citet{gower2019sgd} lemma 2.4 as a direct consequence of expected smoothness.}:

The following definition is motivated by \citet[Lemma 2.4]{gower2019sgd}:
\begin{definition}[{\bf Relaxed Weak Growth Condition (RWGC)}]\label{def:rwgc}
    Consider a family of functions $g(\cdot, z):\bbR^d \to \bbR$, $z\sim \bbP_z$, and $G(\cdot):= \bbE_{z\sim \bbP_z}\left[g(\cdot, z)\right]$. Let $G^* := \min_{\bw \in \bbR^d} G(\bw)$. Then, $g$ and $G$ are said to satisfy the \emph{relaxed weak growth condition} with constants $\rho \geq 0$ and $\sigma \geq 0$, if for all $\bw \in \bbR^d$, 
    \begin{equation*}
        \bbE_{z\sim \bbP_z}\left[\|\nabla g(\bw, z)\|^2\right] ~\leq~ 2\rho (G(\bw) - G^*) + \sigma^2.
    \end{equation*}
\end{definition}
%\begin{remark}[{\bf RWGC Explained}]
% We shall note that this definition tells us how large the gradient lengths of the family get on average. It is called a "growth" condition because it measures how much the gradient values deviate from some expectation, hence the coefficient $\rho \geq 1$.
When $\sigma = 0$, \cref{def:rwgc} recovers the \emph{weak growth condition} of \cite{vaswani2019fast}. In this case, the variance of the stochastic gradients goes to zero as we approach an optimal solution. Hence, at optimality, not only is the expectation of the gradient zero, but it is zero almost surely in $z$.  For regression problems, this condition corresponds to interpolation.

\cref{def:rwgc} is implied by a standard second moment condition on the gradients (take $\rho = 0$).  However, the most interesting cases are when $\sigma^2$ is small relative to $\rho$ and $\rho > 0$.  (This will be the case for \cref{eq:StochasticProb}. See \cref{sec:exact}.)

We then have the following theorem.
\begin{theorem}[\cite{gower2019sgd}]\label{lma:convergeWGC}
Suppose that the RWGC holds for the family of functions $g(\cdot, z):\bbR^d \to \bbR$, and that $G(\cdot):= \bbE_{z\sim \bbP_z}\left[g(\cdot, z)\right]$ is $\mu_g$-strongly convex for some $\mu_g > 0$. Consider the SGD algorithm, initialized at $\bw_0 \in \bbR^d$, with iterations
\begin{equation*}
    \bw_{t+1} = \bw_t - \frac{\bar{\eta}}{\rho} \nabla g(\bw_t, z_t),
\end{equation*}
where $z_t \sim \bbP_z$ is a random sample and 
%$\eta > 0$ is the step-size. If $\eta = \frac{\bar{\eta}}{\rho}$ is parameterized by some 
$\bar{\eta} \in \left(0, \min\left\{1, \frac{\rho}{\mu_g}\right\}\right]$ parametrizes the step-size.  {Let $\bw^*:= \arg\min_{\bw \in \bbR^d} G(\bw)$.}
Then for all $t \geq 0$, we have
\begin{equation*}
    \bbE[\|\bw_{t} - \bw^*\|^2] ~\leq~ \left(1 - \frac{\bar{\eta}\mu_g}{\rho}\right)^t\|\bw_{0} - \bw^*\|^2 + \frac{\bar{\eta} \sigma^2}{\rho\mu_g}.
\end{equation*}
% If $\sigma^2 > 0$, then for $t \geq \left\lceil\frac{\rho}{\bar{\eta}\mu_g}\log\left(\frac{\rho\mu_g\|\bw_{0} - \bw^*\|^2}{\bar{\eta}\sigma^2}\right)\right\rceil$, it holds that
% \begin{equation*}
% \bbE[\|\bw_{t} - \bw^*\|^2] ~\leq~ \frac{2\bar{\eta} \sigma^2}{\rho\mu_g}.
% \end{equation*}
% If $\sigma^2 = 0$, then for all $t \geq \left\lceil\frac{\rho}{\bar{\eta}\mu_g}\log\left(\frac{\|\bw_{0} - \bw^*\|^2}{\epsilon}\right)\right\rceil$ and $\epsilon > 0$, it holds that
% \begin{equation*}
% \bbE[\|\bw_{t} - \bw^*\|^2] ~\leq~ \epsilon.
% \end{equation*}
\end{theorem}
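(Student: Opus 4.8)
The plan is to run the classical one-step contraction argument for constant step-size SGD, with the RWGC used precisely where a second-moment bound on the stochastic gradient is needed. First I would expand the squared distance to the minimizer along one SGD step:
\[
\|\bw_{t+1} - \bw^*\|^2 = \|\bw_t - \bw^*\|^2 - \frac{2\bar{\eta}}{\rho}\IP{\nabla g(\bw_t, z_t)}{\bw_t - \bw^*} + \frac{\bar{\eta}^2}{\rho^2}\|\nabla g(\bw_t, z_t)\|^2 .
\]
Taking the conditional expectation over $z_t$ given $\bw_t$, using unbiasedness $\bbE_{z_t}[\nabla g(\bw_t, z_t)] = \nabla G(\bw_t)$ (differentiating under the expectation defining $G$) on the cross term and the RWGC on the last term, I obtain
\[
\bbE\!\left[\|\bw_{t+1} - \bw^*\|^2 \mid \bw_t\right] \leq \|\bw_t - \bw^*\|^2 - \frac{2\bar{\eta}}{\rho}\IP{\nabla G(\bw_t)}{\bw_t - \bw^*} + \frac{2\bar{\eta}^2}{\rho}\big(G(\bw_t) - G^*\big) + \frac{\bar{\eta}^2 \sigma^2}{\rho^2}.
\]

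Next I would merge the two function-value contributions before invoking strong convexity. Since $\bar{\eta} \leq 1$ and $G(\bw_t) - G^* \geq 0$, we have $\frac{2\bar{\eta}^2}{\rho}(G(\bw_t) - G^*) \leq \frac{2\bar{\eta}}{\rho}(G(\bw_t) - G^*)$; and $\mu_g$-strong convexity of $G$, applied at the point $\bw^*$ (where $\nabla G(\bw^*) = 0$ and $G(\bw^*) = G^*$, the unique minimizer existing by strong convexity), gives $\IP{\nabla G(\bw_t)}{\bw_t - \bw^*} \geq G(\bw_t) - G^* + \tfrac{\mu_g}{2}\|\bw_t - \bw^*\|^2$. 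Substituting both inequalities, the $G(\bw_t) - G^*$ terms cancel exactly and I am left with the one-step recursion
\[
\bbE\!\left[\|\bw_{t+1} - \bw^*\|^2 \mid \bw_t\right] \leq \left(1 - \frac{\bar{\eta}\mu_g}{\rho}\right)\|\bw_t - \bw^*\|^2 + \frac{\bar{\eta}^2 \sigma^2}{\rho^2}.
\]

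Finally I would take total expectations, set $q := 1 - \bar{\eta}\mu_g/\rho$ (the constraint $\bar{\eta} \leq \rho/\mu_g$ guarantees $q \in [0,1)$, so this is a genuine contraction factor), and unroll:
\[
\bbE\!\left[\|\bw_t - \bw^*\|^2\right] \leq q^t \|\bw_0 - \bw^*\|^2 + \frac{\bar{\eta}^2 \sigma^2}{\rho^2}\sum_{k=0}^{t-1} q^k \leq q^t \|\bw_0 - \bw^*\|^2 + \frac{\bar{\eta}^2 \sigma^2}{\rho^2}\cdot\frac{1}{1-q},
\]
and $\tfrac{1}{1-q} = \tfrac{\rho}{\bar{\eta}\mu_g}$ collapses the last term to $\tfrac{\bar{\eta}\sigma^2}{\rho\mu_g}$, which is the claimed bound.

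I do not expect a real obstacle here — this is Theorem 3.1 of \citet{gower2019sgd} restated under their weaker Eq. (9) hypothesis, and the proof transfers verbatim once the RWGC replaces expected smoothness in the second-moment step. The only points requiring care are bookkeeping: checking that the admissible step-size range keeps $q$ in $[0,1)$, and the small trick of bounding $\bar{\eta}^2 \leq \bar{\eta}$ so that the variance-induced $(G(\bw_t) - G^*)$ term can be absorbed into the descent term and cancel against it after applying strong convexity — without that step the function-value term does not vanish cleanly.
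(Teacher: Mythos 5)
Your proof is correct and follows essentially the same argument as the paper's: expand the squared distance, take conditional expectations, apply the RWGC to the second-moment term and strong convexity to the cross term, cancel the $G(\bw_t)-G^*$ contributions using $\bar\eta\le 1$, and unroll the resulting recursion with the geometric-series bound. The only cosmetic difference is that you bound $\bar\eta^2\le\bar\eta$ before cancelling, whereas the paper combines the function-value terms first and then drops them via the sign of $\eta^2\rho-\eta$; these are the same step.
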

% The above result implies that when RWGC is satisfied, \cref{thm:decompose} immediately yields a bound on the expected solution path error. 
% What is more, if the following assumption also holds true, then we may obtain linear convergence without expectation with a high probability, which translates to a high probability bound directly on the solution path error.
In other words, constant step-size SGD under the RWGC exhibits a ``fast'' linear convergence in expectation up to a constant that is directly proportional to $\sigma^2$, after which it stalls.  When $\sigma^2 =0$, we exactly recover the result of \cite{vaswani2019fast}. To keep the exposition self-contained, we provide a proof in the appendix.

% \begin{assumption}[Light Tail Condition]\label{assu:lightTail}
%     For some fixed $a > 0$, we have for any $\bw \in \bbR^d$ that
%     \[
%     \Eb{\exp\left(\frac{\|\nabla g(\bw, z) - \nabla G(\bw)\|^2}{a^2}\right)} \leq \exp (1)
%     \]
% \end{assumption}
% \qrd{need help making this lemma work in order to provide support for bullet 2 in \cref{prop:adaptiveConv}}
% \begin{lemma}[High Probability Convergence under Light Tail Condition] \label{lma:convergeLightTail}
%     Suppose that functions $g(\cdot, z)$ and $G(\cdot)$ satisfy the assumptions in \cref{lma:convergeWGC} and inaddition, satisfy \cref{assu:lightTail}. Then, the SGD algorithm described in \cref{lma:convergeWGC} gives the following with probability at least 
%     $\left(1- \exp \left(\frac{-b^2}{3}\right) \right)\prod_{k=0}^{t-1} \left(1-\frac{e}{(k+1)^b}\right)$:
% \begin{align*}
%     \|\bw_{t} - \bw^*\|^2 
% &\leq 
%     (1 - \eta\mu_g)^t\|\bw_{0} - \bw^*\|^2 + \frac{\eta^2\sigma^2 + 2\eta D_W ab }{1 - \eta\mu_g} + \eta^2 a^2 b \sum_{k = 0}^{t-1} (1 - \eta\mu_g)^k \log(k+1)
% \end{align*}
% \qrd{$b$ and $D_W$ are place holder for some constants here and will be modified later. See definition in the proof. The last term needs a bound too.}

% \cref{lma:convergeLightTail} demonstrates that the second bullet assumption in \cref{prop:adaptiveConv} can be satisfied.

% \end{lemma}

\subsection{Solving \cref{eq:StochasticProb} with SGD}\label{sec:exact}
We next prove that \cref{eq:StochasticProb} satisfies RWGC, and in particular, that the ``$\sigma^2$'' term depends on $\epsilonsp^\ast$, the minimal solution path error of the basis.  To our knowledge, we are the first to make this observation.

% We next apply \cref{lma:convergeWGC} to analyze constant step-size SGD applied to \cref{eq:StochasticProb}.  
% now move on to develop convergence guarantees of SGD applied to our ``stochastic counterpart'' problem \eqref{eq:StochasticProb}, by applying 
% Our results hold for any $\epsilonsp^\ast > 0$, and lead to strong convergence guarantees that improve upon grid search methods under an interpolable basis function class with $\epsilonsp^\ast = 0$.  
% Recall our modified objective \cref{eq:StochasticProb}:
% \begin{equation*}
%     \min_{\bbeta \in \bbR^p} \  \bbE_{\lambdarv\sim\mathbb{P}_\lambda}\left[h(\bPhi(\lambdarv) \bbeta, \lambdarv)\right].
% \end{equation*}
% We propose to solve it with stochastic gradient descent randomly sampling $\lambdarv$.
% In order to apply SGD, we assume that the gradients of $h(\btheta, \lambda)$ are directly computable.
% \begin{assumption}[{\bf Gradient Oracle}]  \label{asn:ExactGradientOracle} We assume access to an oracle that given any $\lambda \in \Lambda$ and $\btheta \in \bbR^d$ returns $\nabla_{\btheta} h(\btheta, \lambda)$.
% \end{assumption}

% To apply \cref{lma:convergeWGC}, we must first confirm our problem satisfies RWGC.  To that end, 
Define 
% define the family of functions $f(\cdot, \lambda)$ and the expectation function $F(\cdot)$ that SGD is directly applied to. For any $\bbeta \in \bbR^p$ and $\lambda \in \Lambda$, these functions are defined by
\begin{subequations}\label{eq:auxiliary}
\begin{align}
    f(\bbeta, \lambda) &:= h(\bPhi(\lambda)\bbeta, \lambda),
\\    \notag
    F(\bbeta) &:=
    \bbE_{\lambdarv \sim \bbP_\lambda}\left[f(\bbeta, \lambdarv)\right] 
= \bbE_{\lambdarv\sim \bbP_\lambda}\left[h(\bPhi(\lambdarv)\bbeta, \lambdarv)\right],
\end{align}
\end{subequations}

% The above identity leads to the following implementation of SGD.
% \begin{algorithm}[H]
% \caption{Pseudo-code: SGD for Problem \eqref{eq:StochasticProb}}\label{alg:sgdExact}
% \begin{algorithmic}
% \STATE Initialize the starting input of Problem \eqref{eq:StochasticProb}, $\bbeta_0\in \bbR^{d\times p}$, arbitrarily.
% \FOR{ t = 0, 1, ..., T}
%     \STATE Sample $\lambdarv_t \sim\mathbb{P}_\lambda$,
%     \STATE $\bbeta_{t+1} \leftarrow \bbeta_t - \eta\bPhi(\lambdarv_t)^\top\nabla_{\btheta} h(\bPhi(\lambdarv_t)\bbeta_t, \lambdarv_t)$.
% \ENDFOR
% \RETURN $\bbeta_T$
% \end{algorithmic}
% \end{algorithm}
% We shall see that this algorithm produces an $\epsilon$-solution path under $O(\log(1/\epsilon))$ iterations.
% We next demonstrate that the RWGC and strong convexity properties are satisfied under our assumptions, for any $\epsilonsp^\ast \geq 0$.
% We will do so by showing that our objective function satisfies the RWGC, and therefore, has the convergence guarantee when we run SGD. 

% Recall
% \[
%     \epsilonsp^\ast 
%     := 
%     \inf_{\bbeta\in\bbR^p} \epsilonsp(\bbeta)
% \]
% where
% \[
%     \epsilonsp(\bbeta)
%     =
%     \sup_{\lambda \in \Lambda}\left(h(\bPhi(\lambda) \bbeta, \lambda) - h(\btheta^*(\lambda), \lambda)\right).
% \]

\begin{lemma}[{\bf \cref{eq:StochasticProb} satisfies RWGC}]\label{lma:satisfyWGC}
    Suppose that \cref{assum:components,assu:regularity} hold and recall the constants defined in \cref{def:bigClittleC}. Then, the family of functions $f(\cdot, \lambda)$ and the function $F(\cdot)$, defined in \cref{eq:auxiliary}, satisfy the relaxed weak growth condition (RWGC) with constants $\rho = CL$ and $\sigma^2 = 2CL\epsilonsp^\ast$. Namely, for all $\bbeta \in \bbR^d$, 
    \begin{equation*}
        \bbE_{\lambdarv\sim\bbP_\lambda}\left[\|\nabla f(\bbeta, \lambdarv)\|^2\right] ~\leq~  2CL(F(\bbeta) - F^*) + 2CL\epsilonsp^\ast.
    \end{equation*}
    In addition, $F(\cdot)$ is $c\mu$-strongly convex.
\end{lemma}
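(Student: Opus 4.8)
The plan is to verify two things: (a) the RWGC inequality with $\rho = CL$ and $\sigma^2 = 2CL\epsilonsp^\ast$, and (b) the $c\mu$-strong convexity of $F$. I would begin with the strong convexity since it is the cleaner of the two. By the chain rule, $\nabla_\bbeta f(\bbeta,\lambda) = \bPhi(\lambda)^\top \nabla h(\bPhi(\lambda)\bbeta, \lambda)$, and differentiating again gives $\nabla^2_\bbeta f(\bbeta,\lambda) = \bPhi(\lambda)^\top \nabla^2 h(\bPhi(\lambda)\bbeta,\lambda)\,\bPhi(\lambda)$. By \cref{assu:regularity}, $\nabla^2 h \succeq \mu I$, so $\nabla^2_\bbeta f(\bbeta,\lambda) \succeq \mu\,\bPhi(\lambda)^\top\bPhi(\lambda)$. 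Taking expectations and using the definition of $c$ in \cref{def:bigClittleC}, we get $\nabla^2 F(\bbeta) \succeq \mu\,\bbE[\bPhi(\lambdarv)^\top\bPhi(\lambdarv)] \succeq c\mu\, I$, which is exactly $c\mu$-strong convexity. (If $h$ is only assumed once-differentiable, I would instead work directly with the defining inequality of strong convexity, substituting $\btheta = \bPhi(\lambda)\bbeta$ and integrating, but the Hessian argument is cleaner.)

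For the RWGC inequality, the key is to bound $\bbE_\lambdarv[\|\nabla f(\bbeta,\lambdarv)\|^2]$ pointwise in $\lambda$ and then integrate. Fix $\lambda$. Since $\|\nabla f(\bbeta,\lambda)\|^2 = \|\bPhi(\lambda)^\top \nabla h(\bPhi(\lambda)\bbeta,\lambda)\|^2 \leq \sigma_{\max}(\bPhi(\lambda)^\top\bPhi(\lambda))\,\|\nabla h(\bPhi(\lambda)\bbeta,\lambda)\|^2 \leq C\,\|\nabla h(\bPhi(\lambda)\bbeta,\lambda)\|^2$. Now I invoke the standard consequence of $L$-smoothness plus convexity (the "$L$-co-coercivity in gradient-norm" bound): for an $L$-smooth convex function, $\|\nabla h(\btheta,\lambda)\|^2 \leq 2L\big(h(\btheta,\lambda) - h(\btheta^*(\lambda),\lambda)\big)$, applied at $\btheta = \bPhi(\lambda)\bbeta$. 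This yields $\|\nabla f(\bbeta,\lambda)\|^2 \leq 2CL\big(h(\bPhi(\lambda)\bbeta,\lambda) - h(\btheta^*(\lambda),\lambda)\big)$. Taking expectation over $\lambdarv \sim \bbP_\lambda$ gives $\bbE_\lambdarv[\|\nabla f(\bbeta,\lambdarv)\|^2] \leq 2CL\,\bbE_\lambdarv\big[h(\bPhi(\lambdarv)\bbeta,\lambdarv) - h(\btheta^*(\lambdarv),\lambdarv)\big]$.

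The remaining step is to split the expectation on the right into $(F(\bbeta) - F^*)$ plus a term controlled by $\epsilonsp^\ast$. Write
\begin{align*}
\bbE_\lambdarv\big[h(\bPhi(\lambdarv)\bbeta,\lambdarv) - h(\btheta^*(\lambdarv),\lambdarv)\big]
&= \big(F(\bbeta) - F^*\big) + \Big(F^* - \bbE_\lambdarv\big[h(\btheta^*(\lambdarv),\lambdarv)\big]\Big).
\end{align*}
Since $F^* = F(\betaavg) = \bbE_\lambdarv[h(\bPhi(\lambdarv)\betaavg,\lambdarv)]$ and $h(\bPhi(\lambdarv)\betaavg,\lambdarv) - h(\btheta^*(\lambdarv),\lambdarv) \leq \sup_\lambda\{h(\bPhi(\lambda)\betaavg,\lambda) - h(\btheta^*(\lambda),\lambda)\} = \epsilonsp(\betaavg)$ pointwise, the second parenthesized term is at most $\epsilonsp(\betaavg)$. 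One then needs $\epsilonsp(\betaavg) \leq \epsilonsp^\ast$; I expect this to be the one subtle point, since $\betaavg$ minimizes $F$, not $\epsilonsp(\cdot)$, so in general $\epsilonsp(\betaavg) \geq \epsilonsp^\ast$. I believe the intended resolution is that the $\epsilonsp^\ast$ appearing in the lemma should in fact be read through \cref{thm:decompose} (which already shows $\epsilonsp(\betaavg) \leq \tfrac{8CL}{c\mu}\epsilonsp^\ast$ since the suboptimality term vanishes), or alternatively that the bound is stated with $\epsilonsp(\betaavg)$ in mind and the clean constant follows; I would check the paper's exact convention here. Modulo that identification, combining the two displays gives $\bbE_\lambdarv[\|\nabla f(\bbeta,\lambdarv)\|^2] \leq 2CL(F(\bbeta)-F^*) + 2CL\epsilonsp^\ast$, i.e. RWGC with $\rho = CL$, $\sigma^2 = 2CL\epsilonsp^\ast$.

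The main obstacle is the last step: cleanly relating $\bbE_\lambdarv[h(\bPhi(\lambdarv)\betaavg,\lambdarv)] - \bbE_\lambdarv[h(\btheta^*(\lambdarv),\lambdarv)]$ to $\epsilonsp^\ast$ rather than to $\epsilonsp(\betaavg)$. Everything else — the chain rule, the spectral bound via $C$, and the smoothness-to-gradient-norm inequality — is routine. I would make sure to state explicitly which definition of "$\epsilonsp^\ast$" is being invoked and, if necessary, cite \cref{thm:decompose} or re-derive $\epsilonsp(\betaavg) \lesssim \epsilonsp^\ast$ to close the gap.
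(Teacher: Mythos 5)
Your derivation of the pointwise bound $\|\nabla f(\bbeta,\lambda)\|^2 \leq 2CL\bigl(h(\bPhi(\lambda)\bbeta,\lambda) - h(\btheta^*(\lambda),\lambda)\bigr)$ is correct and recovers the paper's intermediate inequality (the paper reaches it via $CL$-smoothness of $f(\cdot,\lambda)$ about its own minimizer $\betabar(\lambda)$ together with $f(\betabar(\lambda),\lambda) \geq h(\btheta^*(\lambda),\lambda)$, but your route through $L$-smoothness of $h$ is equivalent), and the strong-convexity argument is fine. The genuine gap is exactly the step you flagged: you bound $F^* - \bbE_{\lambdarv}\bigl[h(\btheta^*(\lambdarv),\lambdarv)\bigr]$ by $\epsilonsp(\betaavg)$, and, as you observe, $\epsilonsp(\betaavg) \leq \epsilonsp^\ast$ is false in general. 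Neither of your proposed repairs closes this with the stated constant: routing through \cref{thm:decompose} only gives $\epsilonsp(\betaavg) \leq \tfrac{8CL}{c\mu}\epsilonsp^\ast$, which would inflate $\sigma^2$ to order $\tfrac{C^2L^2}{c\mu}\epsilonsp^\ast$ rather than $2CL\epsilonsp^\ast$ (and would make the lemma circularly depend on a downstream result), and the paper's convention really is the infimal error $\epsilonsp^\ast$.

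The missing idea is to route the slack through the minimizer of the solution-path error rather than through $\betaavg$. The proof of \cref{thm:decompose} shows (by a Weierstrass argument on the level set $\{\bbeta : \epsilonsp(\bbeta)\leq 1\}$) that the infimum is attained at some $\betasp$ with $\epsilonsp(\betasp)=\epsilonsp^\ast$. Then, pointwise in $\lambda$,
\[
f(\bbeta,\lambda) - h(\btheta^*(\lambda),\lambda) = \bigl(f(\bbeta,\lambda) - f(\betaavg,\lambda)\bigr) + \bigl(f(\betaavg,\lambda) - f(\betasp,\lambda)\bigr) + \bigl(f(\betasp,\lambda) - h(\btheta^*(\lambda),\lambda)\bigr),
\]
where the last term is at most $\epsilonsp(\betasp)=\epsilonsp^\ast$ by definition, and after taking expectations the middle term becomes $F(\betaavg)-F(\betasp)\leq 0$ by optimality of $\betaavg$ for $F$. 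This gives $\bbE_{\lambdarv}\bigl[\|\nabla f(\bbeta,\lambdarv)\|^2\bigr] \leq 2CL(F(\bbeta)-F^*) + 2CL\epsilonsp^\ast$ with the advertised constants. (If you prefer not to invoke attainment, run the same chain with a $\bbeta_\delta$ satisfying $\epsilonsp(\bbeta_\delta)\leq\epsilonsp^\ast+\delta$ and let $\delta\to 0$.)
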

% \vg{@Qiran:  It might help when you clean this up to state the theorem in terms of $c, C, L, \mu$ instead of $\bar \mu$.  This makes the dependence on the ``condition number" $\frac{C}{c}$ a little more clear.}

We  now combine 
\cref{thm:decompose,lma:convergeWGC,lma:satisfyWGC} to yield our main result: a bound on the expected solution path error for constant step-size SGD.  Recall constant step-size SGD in this setting yields the iteration 
\[
\bbeta_{t+1} \leftarrow \bbeta_t - \eta\nabla_{\bbeta}h(\bPhi(\lambdarv_t)\bbeta_t, \lambdarv_t),
\]
where
\[
\nabla_{\bbeta}h(\bPhi(\lambdarv_t)\bbeta_t, \lambdarv_t) = \bPhi(\lambdarv_t)^\top \nabla_{\btheta} h(\bPhi(\lambdarv_t)\bbeta_t, \lambdarv_t),
\]
and $\lambdarv_t \sim \bbP_\lambda$.
We assume that the gradient $\nabla_{\btheta} h(\btheta, \lambda)$ is easily computable for any $\btheta$ and $\lambda$.
% We now combine the results of \cref{thm:decompose} and \cref{lma:convergeWGC,lma:satisfyWGC} to yield a bound on the expected solution path error for SGD. In particular, if we take $\kappa := \frac{CL}{c\mu}$ to be the condition number of \cref{eq:StochasticProb}, then when $\epsilonsp^\ast >0$, \cref{thm:exact1} says that SGD achieves an expected solution path error of $O(\kappa\epsilonsp^\ast)$ after at most $\tilde{O}(\kappa)$ iterations. When $\epsilonsp^\ast = 0$, \cref{thm:exact1} says that SGD achieves an expected solution path error of $O(\epsilon)$ after at most $O(\kappa\log(1/\epsilon))$ iterations for any $\epsilon > 0$.

We then have:
\begin{theorem}[{\bf Expected Solution Path Error Convergence for SGD}]\label{thm:exact1}
    Under \cref{assum:components,assu:regularity}, consider applying SGD to \cref{eq:StochasticProb} with a constant step-size $\eta = \frac{\bar{\eta}}{CL}$ parameterized by $\bar{\eta} \in \left(0, 1\right]$. Then, after $T$ iterations, the expected solution path error is at most
    \begin{align*}
        \Eb{\epsilonsp(\bbeta_T)}
    ~\leq~& 
        2CL\|\bbeta_0 - \betaavg\|^2 \left(1 - \frac{\bar{\eta}c\mu}{CL}\right)^T \\
        &+ \left(\frac{4CL(\bar{\eta} + 2)}{c\mu}\right)\epsilonsp^\ast.
    \end{align*}
    In particular, when $\epsilonsp^\ast > 0$, then for $T \geq \left\lceil\frac{CL}{\bar{\eta}c\mu}\log\left(\frac{c\mu\|\bbeta_0 - \betaavg\|^2}{2\bar{\eta}\epsilonsp^\ast}\right)\right\rceil$,
    \begin{equation*}
    \Eb{\epsilonsp(\bbeta_T)} ~\leq~ \left(\frac{8CL(\bar{\eta} + 1)}{c\mu}\right)\epsilonsp^\ast.
    \end{equation*}
    When $\epsilonsp^\ast = 0$, then for any $\epsilon > 0$ and $T \geq \left\lceil\frac{CL}{\bar{\eta}c\mu}\log\left(\frac{2CL\|\bbeta_0 - \betaavg\|^2}{\epsilon}\right)\right\rceil$,
    \begin{equation*}
    \Eb{\epsilonsp(\bbeta_T)} ~\leq~ \epsilon.
    \end{equation*}
\end{theorem}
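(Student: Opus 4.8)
The plan is to assemble the result directly from the three ingredients already established: \cref{lma:satisfyWGC}, \cref{lma:convergeWGC}, and \cref{thm:decompose}.

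First I would invoke \cref{lma:satisfyWGC}: the family $f(\cdot,\lambda)$ with $F(\cdot) = \Eb{f(\cdot,\lambdarv)}$ satisfies the RWGC with $\rho = CL$ and $\sigma^2 = 2CL\epsilonsp^\ast$, and $F$ is $\mu_g := c\mu$-strongly convex. Before applying \cref{lma:convergeWGC} I must check that the chosen step-size $\eta = \bar\eta/(CL) = \bar\eta/\rho$ with $\bar\eta \in (0,1]$ lies in the admissible interval $(0,\min\{1,\rho/\mu_g\}]$. Since \cref{lma:cPositive} gives $0 < c \le C$ and \cref{assu:regularity} gives $\mu \le L$, we have $\mu_g = c\mu \le CL = \rho$, so $\rho/\mu_g \ge 1$ and the admissible interval is exactly $(0,1]$; our $\bar\eta$ qualifies. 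Then \cref{lma:convergeWGC}, applied with $\bw_t = \bbeta_t$ and $\bw^* = \betaavg$ (the unique minimizer of $F$ by strong convexity), yields
\begin{equation*}
\Eb{\|\bbeta_T - \betaavg\|^2} ~\leq~ \left(1 - \tfrac{\bar\eta c\mu}{CL}\right)^T\|\bbeta_0 - \betaavg\|^2 + \tfrac{2\bar\eta\epsilonsp^\ast}{c\mu},
\end{equation*}
after simplifying $\tfrac{\bar\eta\sigma^2}{\rho\mu_g} = \tfrac{\bar\eta\cdot 2CL\epsilonsp^\ast}{CL\cdot c\mu}$.

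Next I would apply \cref{thm:decompose} to the (random) vector $\bbeta_T$ and take expectations, using linearity, to get $\Eb{\epsilonsp(\bbeta_T)} \leq 2CL\,\Eb{\|\bbeta_T - \betaavg\|^2} + \tfrac{8CL}{c\mu}\epsilonsp^\ast$; substituting the previous display and collecting the $\epsilonsp^\ast$ terms via $\tfrac{4CL\bar\eta}{c\mu} + \tfrac{8CL}{c\mu} = \tfrac{4CL(\bar\eta+2)}{c\mu}$ gives the first claimed bound. For the two ``in particular'' statements I would use $(1-\tfrac{\bar\eta c\mu}{CL})^T \leq \exp(-\tfrac{\bar\eta c\mu}{CL}T)$ and solve for $T$: when $\epsilonsp^\ast > 0$, forcing the transient term below $\tfrac{4CL\bar\eta}{c\mu}\epsilonsp^\ast$ (so the total stays below $\tfrac{8CL(\bar\eta+1)}{c\mu}\epsilonsp^\ast$) is equivalent to $T \geq \tfrac{CL}{\bar\eta c\mu}\log\!\big(\tfrac{c\mu\|\bbeta_0-\betaavg\|^2}{2\bar\eta\epsilonsp^\ast}\big)$; when $\epsilonsp^\ast = 0$ the second term vanishes and forcing the transient term below $\epsilon$ gives $T \geq \tfrac{CL}{\bar\eta c\mu}\log\!\big(\tfrac{2CL\|\bbeta_0-\betaavg\|^2}{\epsilon}\big)$, with the ceilings rendering $T$ an integer.

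Since every component is already in place, there is no deep obstacle. The points that demand care are: (a) verifying the step-size lies in the admissible interval, which rests on $c\mu \le CL$; (b) confirming that \cref{lma:convergeWGC} may be applied under the weaker RWGC hypothesis (not the expected-smoothness/finite-sum form), which the text preceding its statement already addresses; and (c) threading the $\exp(-x)$ estimate and the ceiling cleanly through both corollary statements without arithmetic slips in the constants.
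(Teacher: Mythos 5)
Your proposal is correct and follows essentially the same route as the paper's proof: invoke \cref{lma:satisfyWGC} to get the RWGC constants, verify $\bar\eta \in (0,1]$ is admissible because $c\mu \le CL$, apply \cref{lma:convergeWGC} to bound $\Eb{\|\bbeta_T - \betaavg\|^2}$, and then substitute into the expectation of \cref{thm:decompose}, with the same threshold for the transient term in the ``in particular'' cases. The only cosmetic difference is that you use $(1-x)^T \le e^{-xT}$ where the paper uses the equivalent $\log(1+x)\le x$; the constants and conclusions match exactly.
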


\Cref{thm:exact1} highlights the role of the basis functions in our results. First we see that as $T\rightarrow \infty$, the expected solution path error plateaus at a constant proportional to $\frac{CL}{c\mu} \epsilon^\ast_{\rm sp}$.  As mentioned,   $\epsilon^\ast_{\rm sp}$ measures the expressiveness of the basis, and we expect $\epsilon_{\rm sp}^*$ to be small as we add more basis functions. We interpret $\frac{CL}{c\mu}$ as a condition number for \cref{eq:StochasticProb}, which also depends on the choice of basis through \cref{def:bigClittleC}.  This constant increases as we grow the basis.  Finally, the iteration complexity scales linearly with this condition number.  
Thus, an ``ideal" basis must navigate this tradeoff between $\epsilon^\ast_{\rm sp}$ and $C/c$.

Fortunately, there exists a rich theory on function approximation that studies the relationship between basis functions, uniform  error, and eigenspectra.  In the next section we leverage this theory to provide a comparison of our method with existing discretization techniques in a specialized setting.

%!TEX root = 0_Main.tex

% \section{Preliminaries}

% \paul{Proposed Changes to Structure:  Move 3.2 to be part of the now Section 4 (and call this Section 3). Move 3.1 Later, maybe to a new Section 4? Idea is to focus on results for fixed basis functions first that show how optimization error can be controlled, and then have a "high level" discussion of the interplay between choice of basis functions and approximation error.}
% Auxiliary results useful to our paper.

\section{SPECIALIZED RESULTS FOR $\Lambda = [-1, 1]$}\label{sec:SpecializedResults}

We next leverage results from function approximation theory to bound the number of basis functions needed to achieve a target solution path error.  We focus on the case $\Lambda =[-1,1]$ as it facilitates simple comparisons to existing results and elucidates key intuition.  

We use a simple basis: we approximate each component $i =1, \ldots, d$ of $\theta^*_i(\cdot)$ by the first $q$ Legendre polynomials.  Hence the total number of basis functions is $p=qd$.  Recall, the Legendre polynomials form an orthogonal basis on $[-1, 1]$ with respect to the uniform distribution, i.e., $\mathbb E_{\lambdarv \sim \text{Unif}[-1, 1]}\left[P_n(\lambdarv) P_m(\lambdarv)\right] = \frac{2}{2n+1}\Ib{n = m}$, where $P_n$ and $P_m$ are the $n^\text{th}$ and $m^\text{th}$ Legendre polynomial, respectively.  Since we approximate each component separately, the matrix $\bPhi(\lambda) \in \mathbb R^{d \times qd}$ is block-diagonal, with d blocks of size $1 \times q$.

For this basis, depending on the value of $p = qd$ implied by the choice of $q$, let $C_p, c_p$ refer to the constants \eqref{def:bigClittleC}. We can bound the constant $C_p/c_p$:
\begin{lemma}[\textbf{$C_p/c_p$ for Legendre Polynomials}] \label{lem:CpcpLegendre}
Take $\Lambda = [-1, 1]$ and $\mathbb P_{\lambda}$ to be the uniform distribution on $[-1, 1]$.  Then, for the above basis, $C_p/c_p \leq q^2.$ 
\end{lemma}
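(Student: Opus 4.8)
The plan is to compute $C_p$ and $c_p$ explicitly using the block-diagonal structure of $\bPhi(\lambda)$ and the orthogonality of the Legendre polynomials. First I would observe that since $\bPhi(\lambda) \in \bbR^{d \times qd}$ is block-diagonal with $d$ identical blocks, each equal to the row vector $\bm{P}(\lambda) := [P_0(\lambda),\ldots,P_{q-1}(\lambda)] \in \bbR^{1 \times q}$, the matrix $\bPhi(\lambda)^\top\bPhi(\lambda)$ is block-diagonal with $d$ copies of $\bm{P}(\lambda)^\top\bm{P}(\lambda)$. Consequently $\sigma_{\max}(\bPhi(\lambda)^\top\bPhi(\lambda)) = \sigma_{\max}(\bm{P}(\lambda)^\top\bm{P}(\lambda)) = \|\bm{P}(\lambda)\|^2 = \sum_{n=0}^{q-1} P_n(\lambda)^2$, since $\bm{P}(\lambda)^\top\bm{P}(\lambda)$ is rank one. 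Similarly, $\bbE[\bPhi(\lambdarv)^\top\bPhi(\lambdarv)]$ is block-diagonal with $d$ copies of $\bbE[\bm{P}(\lambdarv)^\top\bm{P}(\lambdarv)]$, so $\sigma_{\min}$ of the full matrix equals $\sigma_{\min}$ of this $q\times q$ block.

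For the upper bound on $C_p$, I would use the standard pointwise bound $|P_n(\lambda)| \leq 1$ for all $\lambda \in [-1,1]$, which gives $C_p = \sup_{\lambda}\sum_{n=0}^{q-1} P_n(\lambda)^2 \leq q$. For the lower bound on $c_p$, the key point is that the Legendre polynomials are orthogonal with respect to $\mathrm{Unif}[-1,1]$ with $\bbE_{\lambdarv\sim\mathrm{Unif}[-1,1]}[P_n(\lambdarv)P_m(\lambdarv)] = \frac{2}{2n+1}\cdot\frac{1}{2}\,\Ib{n=m} = \frac{1}{2n+1}\Ib{n=m}$ (dividing by $2$ because the density is $1/2$ on $[-1,1]$, as flagged in the excerpt's normalization). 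Hence $\bbE[\bm{P}(\lambdarv)^\top\bm{P}(\lambdarv)]$ is the diagonal matrix $\mathrm{diag}\big(1, \tfrac13, \tfrac15, \ldots, \tfrac{1}{2q-1}\big)$, whose smallest eigenvalue is $\frac{1}{2q-1}$. Therefore $c_p = \frac{1}{2q-1}$ and $C_p/c_p \leq q(2q-1) \leq 2q^2$.

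The only real subtlety — and the step I would be most careful about — is reconciling the constant factor in the claimed bound $C_p/c_p \le q^2$ with the computation above, which naively gives $q(2q-1)$. Two resolutions are possible: either the normalization convention makes $\bbE_{\lambdarv\sim\mathrm{Unif}[-1,1]}[P_n(\lambdarv)^2] = \frac{2}{2n+1}$ directly (treating the expectation with the Lebesgue-measure-weighted inner product rather than probability measure, as literally written in the excerpt), in which case $c_p = \frac{2}{2q-1}$ and $C_p \le q$ gives $C_p/c_p \le \frac{q(2q-1)}{2} \le q^2$; or one sharpens the bound on $C_p$ using $\sup_\lambda \sum_{n=0}^{q-1}P_n(\lambda)^2 = \sum_{n=0}^{q-1}P_n(1)^2 = q$ (attained at the endpoints, since $P_n(1)=1$), which is already tight. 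I would adopt the normalization as written in the excerpt, so that $c_p = \frac{2}{2q-1} \geq \frac{1}{q}$ and $C_p \leq q$, yielding $C_p/c_p \leq q^2$ cleanly.

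In summary, the steps in order are: (1) reduce $C_p$ and $c_p$ to the single-block quantities via block-diagonality; (2) bound $C_p \le q$ using $|P_n|\le 1$ on $[-1,1]$; (3) compute $\bbE[\bm{P}(\lambdarv)^\top\bm{P}(\lambdarv)]$ exactly as a diagonal matrix via Legendre orthogonality, identifying $c_p$ with its smallest diagonal entry; (4) divide and simplify. No deep function-approximation input is needed here — this lemma is purely a spectral computation — the analytic heavy lifting (bounding $\epsilonsp^\ast$) happens in the subsequent results.
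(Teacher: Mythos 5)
Your proof is correct and follows essentially the same route as the paper's: reduce to the rank-one block $\psi(\lambda)^\top\psi(\lambda)$, get $C_p = q$ from $|P_n(\lambda)|\le 1$ (attained at $\lambda=1$), and get $c_p$ from the diagonal matrix produced by Legendre orthogonality, yielding $C_p/c_p = q(2q-1)/2 \le q^2$. Your observation about the normalization is apt — the paper's stated relation $\bbE_{\lambdarv\sim\mathrm{Unif}[-1,1]}[P_n(\lambdarv)P_m(\lambdarv)] = \frac{2}{2n+1}\Ib{n=m}$ is really the unnormalized integral (the true expectation carries an extra factor of $1/2$), and the paper's proof silently uses $c_p = \frac{2}{2q-1}$; under the probabilistic normalization the bound becomes $2q^2$, a constant-factor discrepancy with no downstream consequence.
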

Notice, that this constant is independent of $d$ and grows mildly with $q$.  This is not true of all polynomial bases.  One can check empirically that for the monomial basis, $C_p/c_p$ grows exponentially fast in $q$.

Bounding $\epsilon^*_{\rm sp}$ depends on the properties $\btheta^*(\lambda)$. As a first example,

\begin{lemma}[\textbf{$\epsilon^*_{\rm sp}$ for $\nu$-Differentiable Solution Paths}]\label{lma:chebyshev}
Let $\Lambda = [-1, 1]$. Suppose  \cref{assu:regularity} holds. Further assume that there exists an integer $\nu \geq 0$ and constant $V > 0$ such that for all $i=1, \ldots, d$, $\btheta^*_i(\cdot)$ has $\nu$ derivatives,
% and the $\nu^\text{th}$ derivative $\btheta_i^{\ast (\nu)}$ has total variation bounded by $V$. 
{where $\btheta^{*}_i(\lambda)$, ..., $\btheta^{(\nu-1)*}_i(\lambda)$ are absolutely continuous and $\btheta^{(\nu)*}_i(\lambda)$ has total variation bounded by $V$.}
Then, for any $q \geq \nu+1$, for the basis described above, 
\(
\epsilon^*_{\rm sp} \ \leq \ 
    \frac{dL}{2}\left(\frac{2V}{\pi \nu(q-\nu)^\nu}\right)^2.
\)
\end{lemma}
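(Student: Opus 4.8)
The plan is to bound $\epsilonsp^\ast$ by exhibiting a \emph{specific} coefficient vector $\bbeta$ — namely the one whose induced path $\bPhi_{1:p}(\cdot)\bbeta$ is the componentwise truncated Legendre expansion of $\btheta^\ast(\cdot)$ to degree $q-1$ — and controlling $\epsilonsp(\bbeta)$ through \cref{assu:regularity}. Since $h(\cdot,\lambda)$ is $L$-smooth and $\btheta^\ast(\lambda)$ minimizes $h(\cdot,\lambda)$, we have $\nabla_\btheta h(\btheta^\ast(\lambda),\lambda)=0$, so the smoothness upper bound gives
\begin{equation*}
h(\btheta(\lambda),\lambda)-h(\btheta^\ast(\lambda),\lambda)\ \leq\ \frac{L}{2}\|\btheta(\lambda)-\btheta^\ast(\lambda)\|^2
\end{equation*}
for any candidate path $\btheta(\cdot)$. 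Taking the supremum over $\lambda\in[-1,1]$, it remains to bound $\sup_\lambda\|\btheta(\lambda)-\btheta^\ast(\lambda)\|^2=\sup_\lambda\sum_{i=1}^d (\theta_i(\lambda)-\theta^\ast_i(\lambda))^2\leq d\cdot\max_i\sup_\lambda|\theta_i(\lambda)-\theta^\ast_i(\lambda)|^2$ by the per-component uniform approximation error.

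The second step is the function-approximation input: for each coordinate $i$, the degree-$(q-1)$ truncated Legendre (or, equivalently, best polynomial / Chebyshev-type) approximation of a function whose first $\nu$ derivatives are absolutely continuous with $\|(\theta_i^{(\nu)})'\|_{TV}\leq V$ satisfies a uniform error bound of the form
\begin{equation*}
\sup_{\lambda\in[-1,1]}\big|\theta_i(\lambda)-\theta^\ast_i(\lambda)\big|\ \leq\ \frac{2V}{\pi\,\nu\,(q-\nu)^\nu}.
\end{equation*}
This is a classical estimate — it is exactly the Chebyshev truncation bound in Trefethen's \emph{Approximation Theory and Approximation Practice} (Theorem 7.2), and the analogous statement for Legendre coefficients is standard; one would cite this rather than reprove it. Combining with the previous display: $\epsilonsp(\bbeta)\leq \frac{L}{2}\cdot d\cdot\left(\frac{2V}{\pi\nu(q-\nu)^\nu}\right)^2$, and since $\epsilonsp^\ast\leq\epsilonsp(\bbeta)$ this is exactly the claimed bound. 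The hypothesis $q\geq\nu+1$ ensures $q-\nu\geq 1$ so the denominator is well-defined and positive.

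One technical point to handle carefully: the statement uses Legendre rather than Chebyshev polynomials, and the cited bound is most cleanly stated for Chebyshev interpolants/truncations. I would either (i) invoke the known fact that truncated Legendre series enjoy the same order of uniform convergence (with possibly a different but still $O((q-\nu)^{-\nu})$ constant — and the precise constant $\frac{2V}{\pi\nu(q-\nu)^\nu}$ does appear in the Legendre literature, e.g.\ Wang's results on Legendre coefficient decay), or (ii) note that the definition of $\epsilonsp^\ast$ as an infimum over \emph{all} $\bbeta\in\bbR^p$ means I only need \emph{some} degree-$(q-1)$ polynomial per coordinate achieving this error, and the best degree-$(q-1)$ polynomial approximation is no worse than the Chebyshev truncation, which satisfies the bound directly. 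Route (ii) is cleaner and avoids Legendre-specific constants entirely, since the span of the first $q$ Legendre polynomials is exactly the space of polynomials of degree $\leq q-1$. The main obstacle is thus not conceptual but bookkeeping: making sure the per-coordinate approximation-theory bound is quoted in a form whose constant matches the statement, and correctly propagating the factor $d$ and the squaring through the smoothness inequality.
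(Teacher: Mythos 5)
Your proposal is correct and follows essentially the same route as the paper: exhibit the componentwise Chebyshev truncation (which lies in the span of the first $q$ Legendre polynomials since that span is all polynomials of the corresponding degree), invoke \citet[Theorem 7.2]{trefethen2009approximation} for the per-coordinate uniform error, and propagate through the $L$-smoothness bound with the factor $d$ from the $\ell_2$-norm. Your ``route (ii)'' is precisely the paper's implicit argument, and the minor degree-indexing ambiguity you flag is present in the paper's own proof as well.
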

The proof of \cref{lma:chebyshev} is constructive; we exhibit a polynomial with the given solution path error.  
{The bound confirms the intuition that if $\btheta^*(\cdot)$ is smooth (has many derivatives), then adding basis function drives down the approximation error rapidly.  
Problems arising in many application domains, including machine learning problems like ridge regression and relatives, often exhibit highly or even infinitely differentiable solution paths.}

Combining these lemmas with \cref{thm:exact1}
allows us to calculate the requisite basis size and number of iterations needed to 
achieve a target solution-path error:
\begin{theorem}[\textbf{SGD for $\nu$-differentiable Solution Paths}]
\label{thm:SGDnuDifferentiable}
 Let $\Lambda = [-1, 1]$, $\mathbb P_\lambda$ be the uniform distribution on $[-1, 1]$.  Then, under the conditions of \cref{lma:chebyshev} and assume $\nu \geq 2$, if we use $q = O(\epsilon^{\frac{1}{2(1-\nu)}})$ polynomials in the previous basis, and run constant-step size SGD for $O(\epsilon^{\frac{1}{1-\nu}} \log(1/\epsilon))$ iterations, the resulting iterate $\bbeta_T$ satisfies $\Eb{\epsilon_{\rm sp}(\bbeta_T)} \leq \epsilon$.  
\end{theorem}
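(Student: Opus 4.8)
The plan is to combine the two Legendre-specific estimates already established — the condition-number bound $C_p/c_p \le q^2$ of \cref{lem:CpcpLegendre} and the approximation bound $\epsilonsp^\ast \le \frac{dL}{2}\bigl(\frac{2V}{\pi\nu(q-\nu)^\nu}\bigr)^2$ of \cref{lma:chebyshev} — with the convergence guarantee of \cref{thm:exact1}, and then optimize the number $q$ of Legendre polynomials per coordinate against the number of SGD iterations. (The Legendre basis on $[-1,1]$ together with the uniform distribution satisfies \cref{assum:components}, so \cref{thm:exact1} applies.) I would use the \emph{first}, always-valid bound of \cref{thm:exact1} rather than its stalling-threshold form, because the former needs only an \emph{upper} bound on $\epsilonsp^\ast$ (which \cref{lma:chebyshev} supplies), whereas the latter would require a lower bound on $\epsilonsp^\ast$ that is not available.

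Fixing $\bar\eta = 1$ and writing $p = qd$, \cref{thm:exact1} gives
\begin{equation*}
\Eb{\epsilonsp(\bbeta_T)} ~\le~ 2 C_p L\,\|\bbeta_0 - \betaavg\|^2\Bigl(1 - \tfrac{c_p\mu}{C_p L}\Bigr)^T ~+~ \frac{12\,C_p L}{c_p\mu}\,\epsilonsp^\ast ,
\end{equation*}
and I would force each summand to be at most $\epsilon/2$. For the second summand, substituting $C_p/c_p \le q^2$ and the \cref{lma:chebyshev} bound and using $(q-\nu)^{2\nu} \ge (q/2)^{2\nu}$ for $q \ge 2\nu$, it is bounded by $K q^{2-2\nu}$ for a constant $K = K(d,L,\mu,\nu,V)$; since $\nu \ge 2$ the exponent $2-2\nu$ is negative, so it falls below $\epsilon/2$ once $q \ge (2K/\epsilon)^{1/(2\nu-2)} = \Theta\bigl(\epsilon^{1/(2(1-\nu))}\bigr)$. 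Choosing $q$ at this order also ensures $q \ge \max\{\nu+1, 2\nu\}$ for $\epsilon$ small, so the lemmas' hypotheses hold, and it gives $q^2 = O\bigl(\epsilon^{1/(1-\nu)}\bigr)$.

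With $q$ so chosen, the first summand is at most $\epsilon/2$ — by $(1-x)^T \le e^{-xT}$ — once $T \ge \frac{C_p L}{c_p\mu}\log\bigl(\tfrac{4 C_p L\|\bbeta_0 - \betaavg\|^2}{\epsilon}\bigr)$. The prefactor satisfies $\frac{C_p L}{c_p\mu} \le \frac{q^2 L}{\mu} = O(q^2) = O\bigl(\epsilon^{1/(1-\nu)}\bigr)$, so the remaining — and really the only delicate — point is to verify that the logarithm is $O(\log(1/\epsilon))$ despite $C_p$ and $\betaavg$ depending on $q$, hence on $\epsilon$. For this I would use that $|P_n(\cdot)| \le 1$ on $[-1,1]$ forces $C_p = \sup_\lambda \sigma_{\max}\bigl(\bPhi(\lambda)^\top\bPhi(\lambda)\bigr) \le q$; that $c_p = \tfrac{2}{2q-1} = \Theta(1/q)$ is the smallest diagonal entry of the Legendre Gram matrix; and that, starting SGD at $\bbeta_0 = \boldsymbol 0$ and using $c_p\mu$-strong convexity of $F$ with $\nabla F(\betaavg) = \boldsymbol 0$, $\|\betaavg\|^2 \le \frac{2(F(\boldsymbol 0) - F^*)}{c_p\mu} = O(q)$, since $F(\boldsymbol 0) - F^* \le \bbE_{\lambdarv}\bigl[h(\boldsymbol 0, \lambdarv) - h(\btheta^*(\lambdarv), \lambdarv)\bigr]$ is finite and independent of $q$ (by continuity of $h$ and compactness of $\Lambda = [-1,1]$). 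Hence $C_p\|\bbeta_0 - \betaavg\|^2 = O(q^2) = \mathrm{poly}(1/\epsilon)$, its logarithm is $O(\log(1/\epsilon))$, and therefore $T = O\bigl(q^2\log(1/\epsilon)\bigr) = O\bigl(\epsilon^{1/(1-\nu)}\log(1/\epsilon)\bigr)$, as claimed. (If the chosen $q$ happens to make $\epsilonsp^\ast = 0$, I would instead invoke the $\epsilonsp^\ast = 0$ branch of \cref{thm:exact1}, which yields the same iteration count.) In short, there is no single hard inequality; the main effort is exactly this bookkeeping — tracking the $q$-dependence of $C_p$, $c_p$, $\betaavg$, and $\epsilonsp^\ast$ carefully enough that the logarithmic factor in $T$ stays $O(\log(1/\epsilon))$ rather than smuggling in extra polynomial dependence on $1/\epsilon$.
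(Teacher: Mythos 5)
Your proposal is correct and follows the same overall route as the paper: combine \cref{lem:CpcpLegendre} ($C_p/c_p \le q^2$), \cref{lma:chebyshev} ($\epsilonsp^\ast \lesssim q^{-2\nu}$), and \cref{thm:exact1}, take $q = \Theta(\epsilon^{1/(2(1-\nu))})$, and get $T = O(q^2 \log(1/\epsilon))$. Two execution details differ, both in your favor or neutral. First, the paper invokes the stalling-threshold form of \cref{thm:exact1}, whose iteration threshold contains $\log(1/\epsilonsp^\ast)$, and then bounds that logarithm by $O(\log q)$ using only the \emph{upper} bound $\epsilonsp^\ast \lesssim q^{-2\nu}$ — which, as you observe, is the wrong direction unless one also has a matching lower bound on $\epsilonsp^\ast$; your use of the always-valid first bound with an $\epsilon/2$--$\epsilon/2$ split cleanly avoids this. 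Second, to control the $\log$ factor the paper shows $\|\betaavg\| \lesssim 1$ by bounding the Chebyshev-truncation coefficients via \citet[Theorem 7.1]{trefethen2009approximation} and then using strong convexity to relate $\bar\bbeta$ to $\betaavg$; your cruder bound $\|\betaavg\|^2 \le \tfrac{2}{c_p\mu}(F(\boldsymbol 0)-F^*) = O(q)$ is weaker but simpler and equally sufficient, since only $\log(C_p\|\betaavg\|^2) = O(\log(1/\epsilon))$ is needed. No gaps.
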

{For clarity, while \cref{lma:chebyshev} holds for any $\nu \geq 0$, \cref{thm:SGDnuDifferentiable} requires $\nu \geq 2$.  Moreover,} both big ``Oh'' terms should be interpreted as $\epsilon \rightarrow 0$, and both suppress constants not depending on $\epsilon$ (but possibly depending on $h$).  The theorem establishes that the ``smoother" $\btheta^*(\cdot)$ is (i.e. larger $\nu$), the fewer iterations required by our method to achieve a target tolerance.

Recall, \cite{ndiaye2019safe} established that for strongly convex functions, discretization requires at least $O(\epsilon^{-1/2})$ points.  One might expect the number of gradient evaluations per point to scale like $O(\log(1/\epsilon))$.  Hence, if $\nu \geq 3$, our approach requires asymptotically less work.  The larger $\nu$, the larger the savings.

This gap becomes more striking as $\nu \rightarrow \infty$.  Hence, we next consider the case where $\theta^*_i(\lambda)$ is analytic on $[-1, 1]$, i.e, its Taylor Series is absolutely convergent on this interval.  
\begin{lemma}[\textbf{$\epsilon^*_{\rm sp}$ for Analytic Solution Paths}] \label{lma:analytic}
Suppose \cref{assu:regularity} holds and that for each $i=1, \ldots, d$, $\btheta^*_i(\cdot)$ is analytic on the interval $[-1, 1]$. Then, there exist constants $\omega > 1$ and $M > 0$ such that for any $q > 0$, with the basis described above, 
\(
\epsilon^*_{\rm sp} \leq \frac{dL}{2}\left(\frac{2M \omega^{-q}}{\omega - 1}\right)^2.
\)
\end{lemma}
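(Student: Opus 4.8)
The plan is to mirror the constructive proof of \cref{lma:chebyshev}: rather than estimate $\epsilonsp^\ast$ directly, I would exhibit a single explicit coefficient vector $\bbeta$ and bound $\epsilonsp(\bbeta)$, using $\epsilonsp^\ast \le \epsilonsp(\bbeta)$. The two ingredients are (a) converting a uniform approximation error along the path into a bound on the solution path error via smoothness, and (b) invoking classical approximation theory to produce a polynomial of degree $q-1$ that approximates each analytic component $\theta^*_i$ geometrically fast in $q$.

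For step (a), fix any candidate path $\btheta(\cdot)$. Since $\btheta^*(\lambda)$ is the unconstrained minimizer of the strongly convex function $h(\cdot,\lambda)$, first-order optimality gives $\nabla h(\btheta^*(\lambda),\lambda)=\boldsymbol 0$, so the upper inequality in \cref{assu:regularity} yields $h(\btheta(\lambda),\lambda)-h(\btheta^*(\lambda),\lambda)\le \tfrac{L}{2}\|\btheta(\lambda)-\btheta^*(\lambda)\|^2$. Taking the supremum over $\lambda$ and using $\sup_\lambda\sum_{i=1}^d(\theta_i(\lambda)-\theta^*_i(\lambda))^2\le\sum_{i=1}^d\|\theta_i-\theta^*_i\|_{\infty,[-1,1]}^2$, it suffices to find, for each $i$, a polynomial $p_i$ of degree at most $q-1$ with $\|p_i-\theta^*_i\|_{\infty,[-1,1]}\le \tfrac{2M\omega^{-q}}{\omega-1}$: writing each $p_i$ in the Legendre basis $\{P_0,\dots,P_{q-1}\}$ gives the $i$-th block of a vector $\bbeta$ with $\bPhi(\lambda)\bbeta=(p_1(\lambda),\dots,p_d(\lambda))^\top$, and hence $\epsilonsp^\ast\le\epsilonsp(\bbeta)\le \tfrac{dL}{2}\bigl(\tfrac{2M\omega^{-q}}{\omega-1}\bigr)^2$.

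For step (b), since $\theta^*_i$ is analytic on the compact interval $[-1,1]$ it extends holomorphically to an open complex neighborhood of $[-1,1]$, hence to some Bernstein ellipse $E_{\rho_i}$ with $\rho_i>1$, on which it is bounded by some $M_i<\infty$. The classical geometric-decay estimate for truncated Chebyshev (equivalently, near-best polynomial) approximation of a function analytic on a Bernstein ellipse (see, e.g., Trefethen, \emph{Approximation Theory and Approximation Practice}, Thm.~8.2) then supplies a polynomial $p_i$ of degree $\le q-1$ with $\|p_i-\theta^*_i\|_{\infty,[-1,1]}\le \tfrac{2M_i\rho_i^{-(q-1)}}{\rho_i-1}$. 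Because there are only $d$ components, I would set $\omega:=\min_i\rho_i>1$ and, using $\rho_i^{-(q-1)}=\rho_i\,\rho_i^{-q}\le\rho_i\,\omega^{-q}$, choose $M:=\max_i \tfrac{M_i\rho_i(\omega-1)}{\rho_i-1}$, so that $\tfrac{2M_i\rho_i^{-(q-1)}}{\rho_i-1}\le\tfrac{2M\omega^{-q}}{\omega-1}$ for all $i$ and all $q\ge 1$; combining with step (a) completes the proof.

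The main obstacle is step (b): one must justify the passage from ``analytic on the real interval $[-1,1]$'' to ``bounded and holomorphic on some Bernstein ellipse,'' and then invoke the geometric-convergence bound so that the precise constant $\tfrac{2M\omega^{-q}}{\omega-1}$ in the statement is literally attained. Some care is also needed in unifying the $d$ ellipse parameters $\rho_i$ and bounds $M_i$ into single constants $\omega$ and $M$ valid simultaneously for all components and all $q$ — a routine finite min/max argument, but one that relies on the small exponent bookkeeping noted above. The remaining pieces — the smoothness reduction and the change to the Legendre basis — are standard and parallel \cref{lma:chebyshev}.
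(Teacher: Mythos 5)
Your proposal is correct and follows essentially the same route as the paper's proof: reduce the solution path error to a uniform approximation error via $L$-smoothness and first-order optimality of $\btheta^*(\lambda)$, invoke the Bernstein-ellipse/Chebyshev-truncation bound (Trefethen, Thm.~8.2) for each analytic component, and unify the per-component constants $\rho_i, M_i$ into a single $\omega$ and $M$. Your handling of the degree bookkeeping ($q-1$ vs.\ $q$) and the explicit re-expansion of the Chebyshev truncation in the Legendre basis are slightly more careful than the paper's, but the argument is the same.
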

The proof is again constructive. The constants $\omega$ and $M$ pertain to the analytic continuation of $\theta^*_i(\cdot)$ to the complex plane.  Importantly, the solution path error now dies geometrically fast (like $\omega^{-2q}$).  Using this faster decay rate yields,
\begin{theorem}[\textbf{SGD for Analytic Solution Paths}] \label{thm:SGDforAnalytic}
Let $\Lambda = [-1,1]$, and $\mathbb P_{\lambda}$ be the uniform distribution on $[-1,1]$. Then, under the conditions of \cref{lma:analytic}, if we use $q = \log(1/\epsilon)/\log(\omega)$ polynomials 
in the previous basis, and run constant-step size SGD for $O\left(\log^2(1/\epsilon) \log\log(1/\epsilon)\right)$ iterations, the resulting iterate $\bbeta_T$ satisfies $\Eb{\epsilonsp(\bbeta_T)} \leq \epsilon$.  
\end{theorem}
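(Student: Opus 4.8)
The plan is to specialize the general SGD guarantee of \cref{thm:exact1} to the Legendre basis, using \cref{lma:analytic} to control the expressiveness of the basis and \cref{lem:CpcpLegendre} to control its conditioning, under the choice $q = \log(1/\epsilon)/\log\omega$ (so that $\omega^{-q} = \epsilon$).

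First I would record what this choice buys. By \cref{lma:analytic}, $\epsilonsp^\ast \le \frac{dL}{2}\bigl(\tfrac{2M}{\omega-1}\bigr)^2\omega^{-2q} = K_0\epsilon^2$ for a constant $K_0$ independent of $\epsilon$. By \cref{lem:CpcpLegendre}, $C_p/c_p \le q^2$; and since $|P_n|\le 1$ on $[-1,1]$ and $\mathbb E[P_n^2] = \tfrac{2}{2n+1}$, one also has $C_p \le q$ and $c_p \ge \tfrac{2}{2q-1}$, so $\log C_p$ and $\log(1/c_p)$ are $O(\log\log(1/\epsilon))$. Hence the effective condition number $\kappa := C_pL/(c_p\mu)$ is $O(\log^2(1/\epsilon))$. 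Plugging these into \cref{thm:exact1}, the expressiveness term $\tfrac{4C_pL(\bar\eta+2)}{c_p\mu}\epsilonsp^\ast \le 4(\bar\eta+2)\kappa K_0\epsilon^2 = O(\epsilon^2\log^2(1/\epsilon))$ is already below $\tfrac{\epsilon}{2}$ for small $\epsilon$, so the basis is never the bottleneck; everything reduces to driving the optimization term $2C_pL\|\bbeta_0 - \betaavg\|^2(1-\bar\eta/\kappa)^T$ below $\tfrac{\epsilon}{2}$, which via $\ln(1-x)\le -x$ holds once $T \gtrsim \kappa\bigl(\log(1/\epsilon) + \log(C_pL\|\bbeta_0-\betaavg\|^2)\bigr)$.

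The main obstacle is the $\log(1/\epsilon)$ appearing inside that bracket. A cold start ($\bbeta_0 = 0$, for which $\|\betaavg\| = O(1)$ follows from the geometric decay of the Legendre coefficients of an analytic function together with $c_p\mu$-strong convexity of $F$) only yields $T = O(\kappa\log(1/\epsilon)) = O(\log^3(1/\epsilon))$. To obtain the sharper $O(\log^2(1/\epsilon)\log\log(1/\epsilon))$ I would run SGD in the staged fashion of \cref{alg:adaptiveSgd}: a sequence of sub-problems with geometrically increasing numbers $q_1 < q_2 < \cdots < q$ of Legendre polynomials, each warm-started from the previous iterate padded with zeros. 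The aim is to exploit that the minimal path errors $\epsilonsp^{\ast,(q_j)}$ decay geometrically in $q_j$ by \cref{lma:analytic}, which — through $F^{(q_j),\ast} - F^{(q_{j+1}),\ast} \le \epsilonsp^{\ast,(q_j)}$ and $c_{q_{j+1}}\mu$-strong convexity — forces the warm start entering stage $j{+}1$ to already lie within a $\mathrm{polylog}(1/\epsilon)$ factor of its target, so that stage needs only $O(\kappa_{q_{j+1}}\log\log(1/\epsilon))$ SGD steps; with $q_j$ chosen to double each stage, $\sum_j \kappa_{q_j} = O(\kappa)$ and the counts telescope to $O(\log^2(1/\epsilon)\log\log(1/\epsilon))$. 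The delicate bookkeeping — picking the growth schedule and the per-stage target accuracies so the telescoping is genuinely valid, and bounding the inter-stage coupling $\|[\betaavg^{(q_j)},0] - \betaavg^{(q_{j+1})}\|^2$ — is where the real work lies; a final application of \cref{thm:decompose} then converts the last iterate's suboptimality into the claimed bound on $\Eb{\epsilonsp(\bbeta_T)}$.
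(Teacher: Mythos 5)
Your first half tracks the paper's proof closely: the same choice $q=\log(1/\epsilon)/\log\omega$, the same use of \cref{lma:analytic} to get $\epsilonsp^\ast\lesssim\omega^{-2q}=\epsilon^2$, the same $C_q/c_q\le q^2$ from \cref{lem:CpcpLegendre}, and essentially the same argument that $\|\betaavg\|=O(1)$ (geometric decay of the truncation coefficients plus strong convexity). Where you diverge is the iteration count, and you have put your finger on a real issue. The paper does \emph{not} use a staged scheme: it substitutes directly into the iteration bound of \cref{thm:exact1} and writes the count as $q^2(\log\|\betaavg\|+\log q)$, i.e., it treats the $\log(1/\epsilonsp^\ast)$ contribution inside $\log\bigl(c\mu\|\bbeta_0-\betaavg\|^2/(2\bar\eta\epsilonsp^\ast)\bigr)$ as $O(\log q)$. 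That substitution is what happens in the $\nu$-differentiable case, where $\epsilonsp^\ast$ decays only polynomially in $q$; in the analytic case $\log(1/\epsilonsp^\ast)\asymp q\log\omega\asymp\log(1/\epsilon)$, and your computation that the direct cold-start route yields $T=O(\kappa\log(1/\epsilon))=O(\log^3(1/\epsilon))$ is the honest conclusion of the paper's own machinery.

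The gap is that your proposed repair is not carried out, and as sketched it does not work. The telescoping hinges on the claim that the warm start entering stage $j{+}1$ is within a $\mathrm{polylog}(1/\epsilon)$ factor of its target, but with a doubling schedule the stage-$j$ iterate sits at squared distance $\sim\epsilonsp^{\ast,(q_j)}/(c\mu)$ from $\betaavg^{(q_{j+1})}$ while the stage-$(j{+}1)$ target is $\sim\epsilonsp^{\ast,(q_{j+1})}$, a ratio of $\omega^{2(q_{j+1}-q_j)}=\omega^{2q_j}$; the per-stage logarithmic factor is therefore $\Theta(q_j)$, the stage cost is $\Theta(q_{j+1}^2q_j)$, and the total is again $\Theta(q^3)$. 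More generally, writing $\Delta_j$ for the log-contraction achieved at stage $j$, any schedule must satisfy $q_j\gtrsim\log(1/\epsilon_j)$ for the stage-$j$ basis to even represent the stage-$j$ target, so the total cost is at least $\sum_j\bigl(\sum_{i\le j}\Delta_i\bigr)^2\Delta_j\approx\tfrac13\log^3(1/\epsilon)$ regardless of how the stages are chosen. So staging cannot rescue the $O(\log^2(1/\epsilon)\log\log(1/\epsilon))$ count given only the bound $C_q/c_q\le q^2$; to prove the statement as written you would need either a sharper bound on the effective condition number of $F$ for the Legendre basis or a different optimization argument, and otherwise you should state (and can cleanly prove) the $O(\log^3(1/\epsilon))$ version, which still carries the intended near-exponential improvement over discretization.
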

Again, the big ``Oh" hides constants that do not depend on $\epsilon$. Compared to \cite{ndiaye2019safe}, the amount of work required is almost exponentially smaller for $\dim(\Lambda) = 1$. 

% In higher dimensions, the curse of dimensionality implies that the number of discretization points required for accuracy grows exponentially with $\dim(\Lambda)$. In contrast, our method's performance does not explicitly depend on $\dim(\Lambda)$ and depends instead on the minimal solution path error $\epsilon_\mathrm{sp}^*$. The key advantage arises when we either select a basis ensuring a small $\epsilon_\mathrm{sp}^*$ using prior knowledge or adopt a highly flexible, adaptive approach, as we next discuss in \cref{sec:Adaptive}.

In higher dimensions, the curse of dimensionality causes the required discretization points to grow exponentially with \(\dim(\Lambda)\). In contrast, our method’s performance depends not on \(\dim(\Lambda)\) but on the minimal solution path error \(\epsilon_\mathrm{sp}^*\). The key advantage arises when we either select a basis ensuring a small $\epsilon_\mathrm{sp}^*$ using prior knowledge or adopt a highly flexible, adaptive approach, as we next discuss in \cref{sec:Adaptive}.

{

\section{IMPLEMENTATION GUIDELINES}\label{sec:Adaptive}
Although \cref{sec:SpecializedResults} provides insight on how to select a basis (in certain cases), choosing the ``right'' basis apriori from approximation theory remains a difficult challenge. To circumvent this issue, we re-introduce \cref{alg:adaptiveSgd}. \cref{alg:adaptiveSgd} is more practical, as we only need to specify: i) a sequence of basis functions, ii) a distribution $\mathbb P_\lambda$ and, iii) a criterion for deciding when to add a new function to the basis. We next provide intuition and practical guidance on these choices.  
% \vskip 6pt

{\bf Intuition for an ``Ideal" Basis.}
Consider
\[
h(\btheta, \blambda)\ =\ \btheta^\top \bm Q(\blambda) \btheta + \bm b(\blambda)^\top \btheta.
\]
In this special case, \cref{eq:StochasticProb} becomes
\begin{equation*}
    \min_{\bm \beta} \bm \beta^\top \Eb{\bPhi(\blambdarv)^\top \bm Q(\blambdarv) \bPhi(\blambdarv) } \bbeta 
    + \Eb{\bm b(\blambdarv)^\top \bPhi(\blambdarv)} \bbeta.
\end{equation*}
The complexity of this problem depends strongly on its condition number (in our theory, this is bounded by $\frac{CL}{c\mu}$), which equals the condition number of the matrix 
\(
\Eb{\bPhi(\blambdarv)^\top \bm Q(\blambdarv) \bPhi(\blambdarv) }
\)
and is directly computable in this case.
Hence, an ideal basis would ensure this condition number is as close to $1$ (its lower bound) as possible. In particular, if the basis functions $\bPhi_j(\cdot)$ are orthonormal with respect to the inner product
\[
\langle \bPhi_j(\cdot), \bPhi_k(\cdot) \rangle \ \equiv \ \bbE_{\lambdarv\sim \bbP_\lambda}\left[\bPhi_j(\blambdarv)^\top \bm Q(\blambdarv) \bPhi_k(\blambdarv)\right],
\]
then, the resulting condition number is $1$.  
For general $h$, insofar as $F(\bbeta) = \bbE_{\lambdarv\sim \bbP_\lambda}\left[h(\bPhi(\lambdarv)\bbeta, \lambdarv)\right]$ might be approximated by a second-order Taylor series expansion around the optimum $\betaavg$, a good basis might be orthonormal w.r.t. to the Hessian of $F(\cdot)$ at optimality. We verify this in \cref{sec:highDim}. 
Absent knowledge about the Hessian at optimality, the best approach is to use a family of flexible and expressive basis functions.  
In \cref{sec:weighted_logit,sec:portfolioAlloc}, we use orthogonal polynomials for the following reasons: i) Adding new basis functions does not require altering the existing basis, and ii) Intuitively, orthogonality suggests that if $\hat\bbeta_{p-1}$ is near optimal in the $(p-1)^\text{th}$ iteration, then $[\hat \bbeta_{p-1}, 0]$ is likely to be a near optimal (and good warm-start) solution in the $p^\text{th}$ iteration. The first benefit is not shared, for example, by other approaches like cubic splines or neural networks.
% This is unlike, e.g., cubic splines or adding nodes to a neural network, where the addition of a new spline point affects previous bases.
% \begin{enumerate}[label= \roman*)]
%     \item \vskip -10pt Extending the basis does not require altering existing basis.  This is unlike, e.g., cubic splines or adding nodes to a neural network, where the addition of a new spline point affects previous bases.
%     \item \vskip -10pt Intuitively, orthogonality suggests that if $\hat\bbeta_{p-1}$ is near optimal in the $(p-1)^\text{th}$ iteration, then $[\hat \bbeta_{p-1}, 0]$ is likely to be close to optimal in the $p^\text{th}$ iteration.  Thus, we benefit from warmstarts.
% \end{enumerate}
% \vskip -10pt

{\bf Interplay Between Basis and Distribution.}
Although \emph{any} sequence of basis functions and \emph{any} distribution can be used in \cref{alg:adaptiveSgd}, we suggest making these choices in concert.  In our experiments, we focus on sequences of polynomials that are orthogonal with respect to $\mathbb  P_{\lambda}$. The Legendre polynomials and the uniform distribution on $[-1, 1]$ is one such pair, but there are many canonical examples including Hermite polynomials with the normal distribution and Laguerre polynomials with the exponential distribution. There are performance-optimized implementations of these families in standard software (see, e.g., \texttt{scipy.special}). 
Although polynomials map to $\mathbb R$, 
by approximating each dimension separately as in \cref{sec:SpecializedResults}, we can extend polynomials to a basis for $d > 1$.  Finally, polynomials are highly expressive as they are uniform approximators for Lipschitz functions.

% \vskip 8pt

{\bf Deciding the Number of Basis Functions ``On the Fly".}
As reflected by our theoretical convergence guarantees, choosing a large, flexible basis induces a tradeoff.  A larger basis increases the condition number of \cref{eq:StochasticProb} (as measured by the ratio $\frac{CL}{c\mu}$), and hence convergence will be slower. This observation motivates our adaptive approach that only adds basis functions as necessary until we reach a desired accuracy.  One approach is to empirically approximate the objective of \cref{eq:StochasticProb} using a hold-out validation set, and add a basis function when performance stalls. 
% This approach requires additional functional evaluations and samples $\lambdarv$.  In our experiments, we explore an alternate criterion motivated by the RWGC.  Specifically, if $\bbeta$ is near-optimal, the  upperbound on the second moment of the gradient in RWGC is approximately constant.  Thus, we add a basis function when second moment of the gradient averaged over recent iterations has plateaued.  This allows us to reuse gradients and additional function evaluations.
In our experiment with low dimensional hyperparameters (\cref{sec:weighted_logit} and 6.2), we use Gauss-Legendre quadrature to evaluate the stochastic objective exactly over the entire hyperparameter space. In the high dimensional hyperparameter experiment (\cref{sec:highDim}), we instead compute the performance of our learned $\bbeta$ through an ERM version of the stochastic objective on a validation set of size $1000$.

}
% 
% \input{Noisy Gradients}

% \input{Practical Implementation}

%!TEX root = 0_Main.tex
\section{NUMERICAL EXPERIMENTS}\label{sec:experiment}

% {\color{red}
% Experiment for computing the Pareto front:
% \begin{itemize}
%     \item \texttt{https://citeseerx.ist.psu.edu/document?repid=rep1&type=pdf&doi=f329eb18a4549daa83fae28043d19b83fe8356fa}
    
%     Section 3.3. $\Lambda$ is 2-dimensional. Pro: easier/faster to code; con: too simple?
    
%     \item \texttt{https://arxiv.org/pdf/2008.10797.pdf}
    
%     Page 11 section 5.2. Real data with real purpose (accuracy vs other fairness metrics). $\Lambda$ is more than 2-d. con: takes long to code
% \end{itemize}
% }

% In this section, we empirically validate our theoretical results. 

We next empirically compare 
i) our approach using a fixed, large basis (denoted \emph{LSP} for ``Learning the Solution Path") 
ii) our adaptive \cref{alg:adaptiveSgd} (denoted \emph{ALSP} for ``Adaptive LSP" and 
iii) uniform discretization, a natural benchmark. 
We aim to show that both our approaches not only outperform the benchmark but that the qualitative insights from our theoretical results hold for more general optimization procedures than constant step-size SGD.  We also provide preliminary results on how the choice of basis affects performance.
Our repository can be found at \url{https://github.com/Cumberkid/Learning-the-Optimal-Solution-Path}. 

We choose uniform discretization to be our benchmark in lieu of other schemes (like geometric spacing) because i) it matches the theoretical lower bound from \citep{ndiaye2019safe} and ii) we see it as most intuitive for learning the solution path with small \emph{uniform} error.  Specifically, for various $\epsilon$, we consider a uniform spacing of size $\sqrt{\epsilon}$ in each dimension of $\lambda$, and run (warm-started) gradient descent for $O(\log(1/\epsilon))$ iterations.  The constant hidden by big ``Oh'' here is calibrated in an oracle fashion to achieve a solution-path error of $O(\epsilon)$ (see \cref{sec:fixScheduleNGS} for details) giving the benchmark a small advantage.

\subsection{Weighted Binary Classification}\label{sec:weighted_logit}
We consider a binary classification problem using a randomly selected subset of $1000$ cases from
the highly imbalanced Law School Admission Bar Passage dataset \citep{wightman1998lsac}. 
Of the $1000$ cases, there are $956$ positive instances and $44$ negatives. Standard logistic regression predicts $992$ positives with a false positive rate of $0.86$. When identifying students likely to fail is key, the default classifier may not be useful.  Reweighting cases is a standard approach to improve false positive rate at the cost of overall accuracy.
% If our aim in predicting the Bar exam outcome for students is to provide additional support to those predicted to fail, then this default classifier proves not very useful. Overweighting the failure cases presents a viable strategy for training a more useful classifier, albeit the appropriate weight remains unclear. This scenario underscores the importance of obtaining the entire solution path across all potential weights on the positive/negative classes.

We take 
% \[
%     h(\btheta, \lambda) = (1-w(\lambda)) l_\text{pos}(\btheta) + w(\lambda) l_\text{neg}(\btheta) + 0.125 \|\btheta\|^2,
% \]
\[
    h(\btheta, \lambda) = (1-\lambda) l_\text{pos}(\btheta) + \lambda l_\text{neg}(\btheta) + 0.125 \|\btheta\|^2,
\]
where $l_\text{pos}(\btheta)$ and $l_\text{neg}(\btheta)$ denote the negative log-likelihood on the positive and negative classes respectively.  Specifically, letting $(\bx_i, y_i) \in \mathbb R^{45} \times \{0, 1\}$ for $i=1, \ldots, n$ denote the data, 
    \[
        l_\text{pos}(\btheta)
        = \frac{1}{|\{i:\ y_i = 1\}|} \sum_{i: y_i = 1} \log(1+e^{(-2y_i+1)\bx_i^\top\btheta}),
    \]
and similarly for $l_{\text{neg}}(\btheta)$. 

% The hyperparameter $\lambda \in [0,1]$ controls the weight placed on each class. 

% We use (scaled and shifted) Legendre polynomials with a $\text{Unif}[0,1]$ distribution.

% We consider a parametric logistic regression formulation that places different weights on the logistic loss for pass (positive) and failure (negative) cases.
%     % Define $l(\btheta)$ as the log-likelihood of binary logistic regression at $\btheta$ 
%     With feature data $\{\bx_i\}_{i = 1}^n$ and corresponding class labels $\{y_i\}_{i = 1}^n$ with $y_i \in \{0, 1\}$, the problem objective function is:
%     \[
%     h(\btheta, \lambda) = w_\text{pos} l_\text{pos}(\btheta) + w_\text{neg} l_\text{neg}(\btheta) + w_\text{const} \|\btheta\|^2,
%     \]
%     with $w_\text{const}$ being an absolute constant, and :
%     % \[
%     %     l(\btheta)
%     %     = \frac{1}{n} \sum_{i=1}^n y_i \log(1+e^{-\bx_i\btheta}) + (1-y_i) \log(1 + e^{\bx_i\btheta}).
%     % \]
%     \[
%         l_\text{pos}(\btheta)
%         = \frac{1}{|\{i:\ y_i = 1\}|} \sum_{i: y_i = 1} \log(1+e^{(-2y_i+1)\bx_i^\top\btheta}),
%     \]
%     and likewise for $l_\text{neg}(\btheta)$.

We consider two different choices of distribution for $\lambda\in[0, 1]$ together with two different orthogonal polynomial bases:
\begin{enumerate}[label= \roman*)]
    \item \vskip -10pt A $\text{Unif}[0,1]$ distribution.  Here we use (scaled and shifted) Legendre polynomials.
    \item \vskip -10pt A $\text{Beta}(b+1, a+1)$ distribution.  Here we use (shifted) Jacobi polynomials with parameters $(a, b) = (-.3, -.7)$.
    % (cf. \cref{sec:jacobi}.)
\end{enumerate}
\vskip -10pt
% These represent two parameterizations for (essentially) the same family of optimization problems.  Note that in the Laguerre case, a uniform discretization in $\lambda$ induces a non-uniform discretization in $w(\lambda)$ with large spacing near $0$.  

%Although we do not pursue this here, the interplay between the parametrization of the optimization family, choice of basis, and choice of distribution is an interesting avenue of future research.

The ground truth $\btheta^*(\lambda)$  is computed via $5000$ iterations of (warm-started) gradient descent over a uniform grid of $2^{10}$ points. Solution path error is approximated by the uniform error over this grid.

For LSP, we run SGD using \texttt{torch.optim.SGD}.  In lieu of a constant learning rate, we dynamically reduce the learning rate according to the 
Distance Diagnostic of \citet[Section 4]{pesme2020convergence}. This dynamic updating is more reflective of practice.  We use the suggested parameters from \citet{pesme2020convergence} with the exception of $\mathtt{q}$\footnote{Note that $\mathtt{q}$, defined as a diagnostic frequency control parameter in \citet[Section 4]{pesme2020convergence}, is different from $q$, the number of polynomials in our paper.}, which we tune by examining the performance after $200$ iterations.  (We take $\mathtt{q}=1.3$ for both the Legendre and Jacobi bases.)  
For ALSP, we initialize the algorithm with $5$ polynomials and stop it after reaching $12$ polynomials ($q=5$ to $12$).

% For LSP, we record the approximated solution path error
% as a function of the number of gradient calls.
% For uniform discretization, at each value of $\delta$, we count the \emph{total} number of gradient calls across all $1/\sqrt{\delta}$ grid points, and record the approximate solution path error incurred by the interpolated solution path along the chosen grid.

\begin{figure}[hbt!]
     \centering
          \begin{subfigure}[b]{0.225\textwidth}
         \centering
         \includegraphics[width=\textwidth]{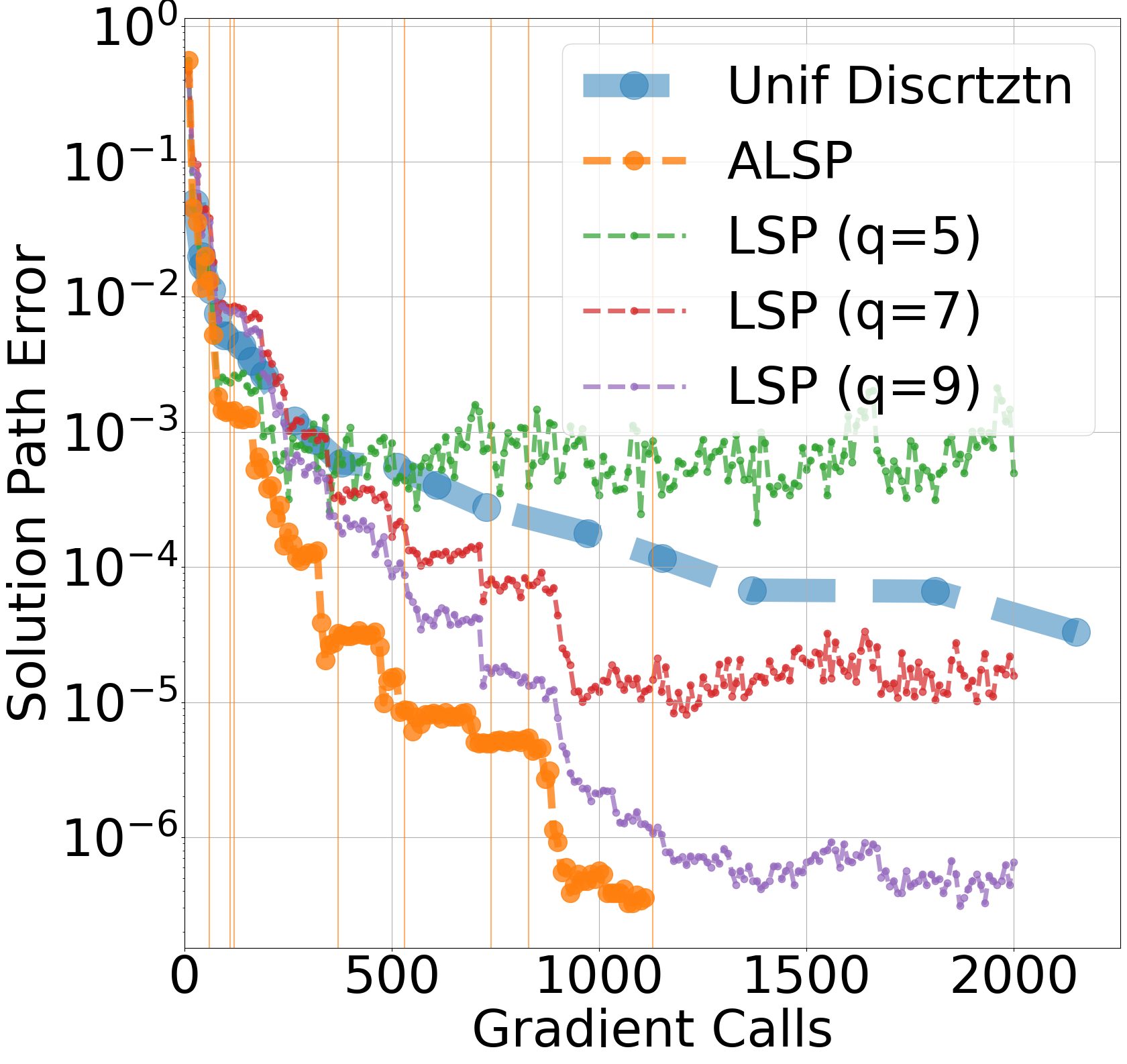}
         \caption{}
         \label{fig:boosted_and_fixed_exact_jacobi}
     \end{subfigure}
     \hfill
    \begin{subfigure}[b]{0.245\textwidth}
         \centering
         \includegraphics[width=\textwidth]{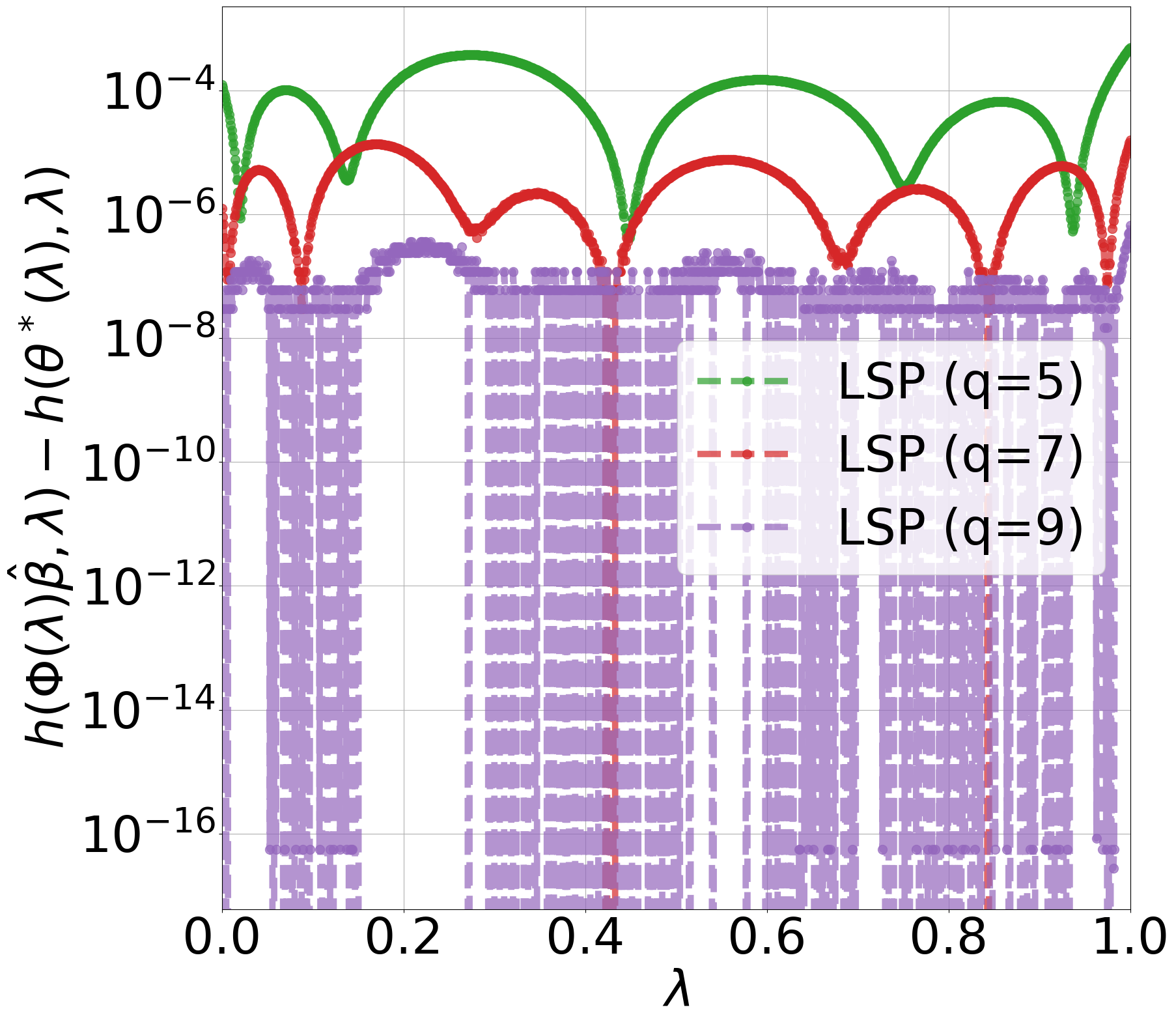}
         \caption{}
         \label{fig:err_along_soln_path_jacobi}
     \end{subfigure}
     \hfill
     \begin{subfigure}[b]{0.2375\textwidth}
         \centering
         \includegraphics[width=\textwidth]{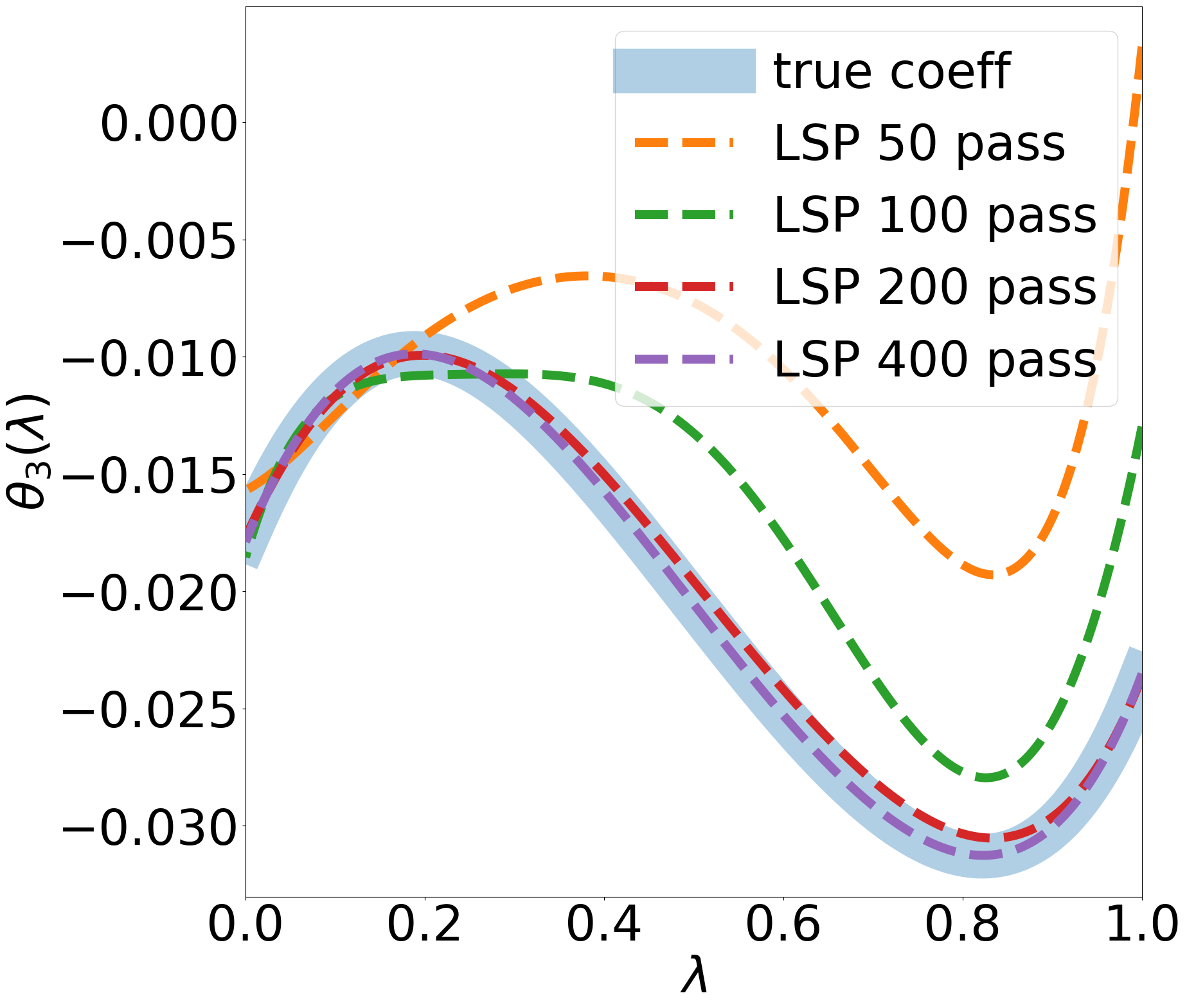}
         \caption{}
         \label{fig:CPP_exact_legendre}
     \end{subfigure}
     \hfill
     \begin{subfigure}[b]{0.2375\textwidth}
         \centering
         \includegraphics[width=\textwidth]{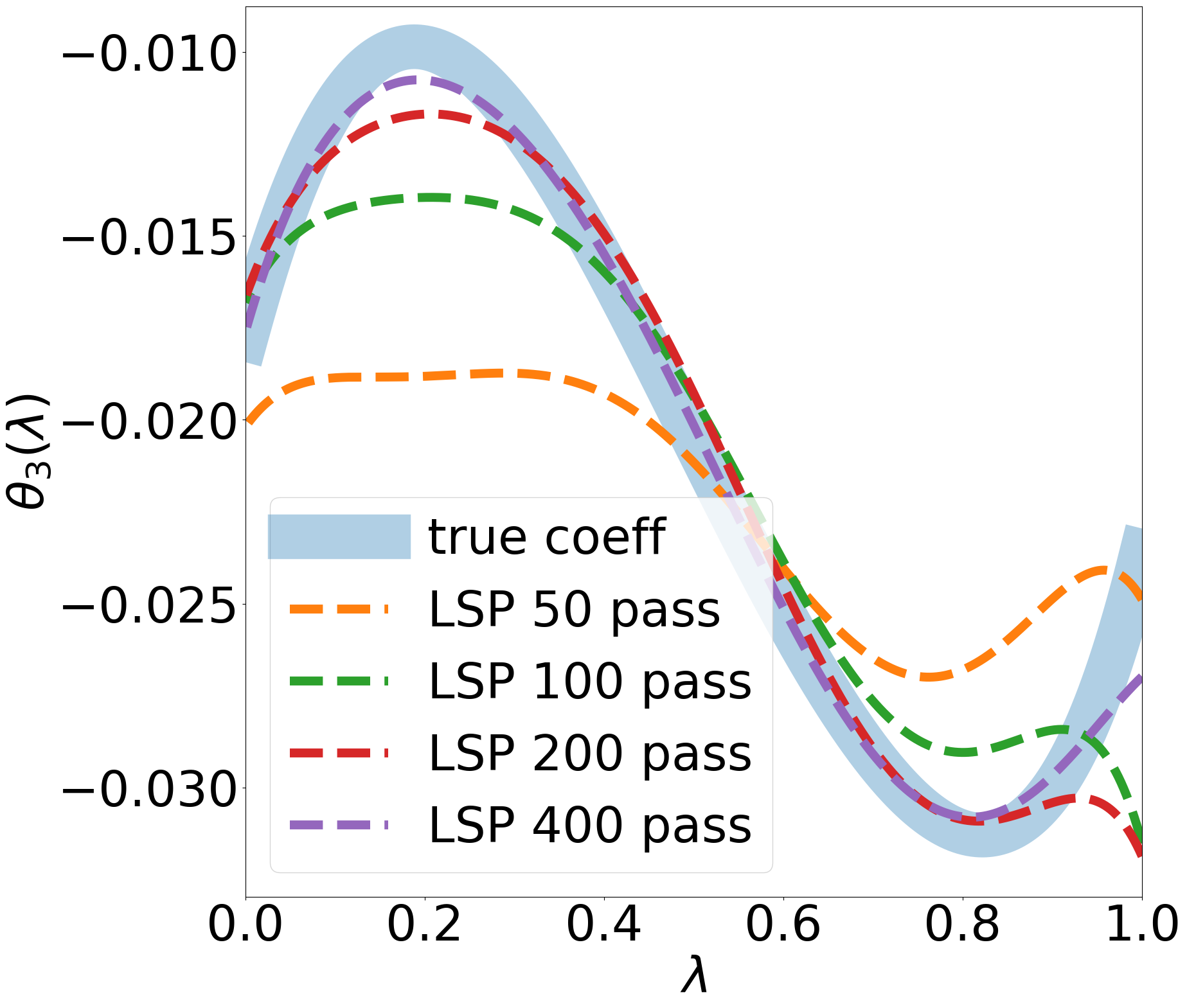}
         \caption{}
         \label{fig:CPP_exact_jacobi}
     \end{subfigure}

     \caption{{\bf LSP and ALSP for Weighted Binary Classification.} 
     (a) Compares methods using Jacobi polynomials.
     (b) Error in solution path as a function of $\lambda$ using Jacobi polynomials.
     (c) and (d) compare $\btheta_3(\lambda)$ across $\lambda$ in the Legendre and Jacobi bases respectively with $7$ polynomials.}
     
     % \centering
     %     \includegraphics[width=.35\textwidth]{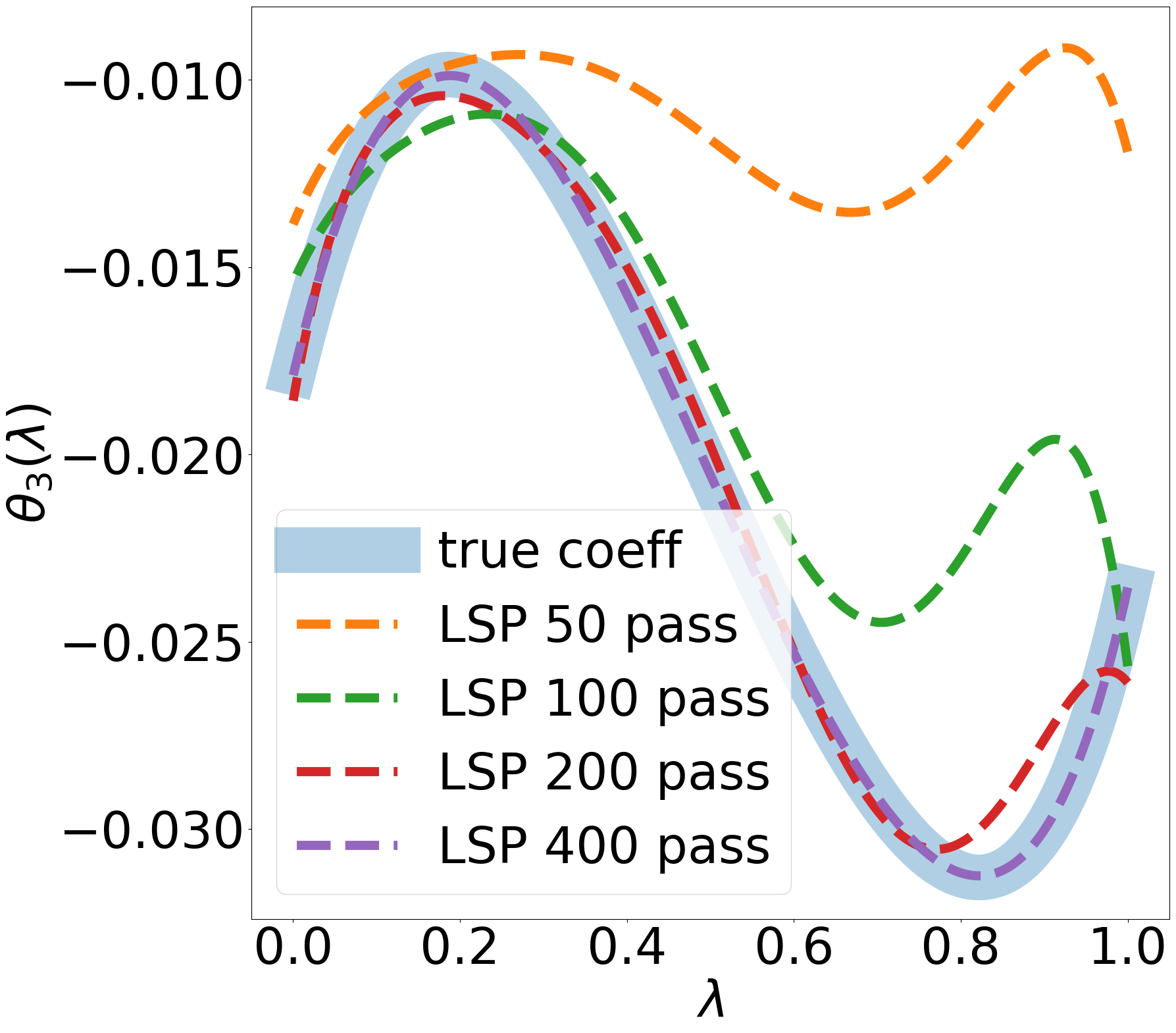}
     %     \caption{{\bf LSP: Interpolating the Solution Path for Weighted Binary Classification.} Compare $\bPhi_{3, 1:7}(\lambda)\betahat$ to $\btheta^\ast_3(\lambda)$ across $\lambda$ after running LSP with $7$ Legendre polynomials for 500, 100, 200, 400 gradient calls.}
        \label{fig:exact}
\end{figure}

\Cref{fig:preview} in the introduction and \cref{fig:exact} panel (a) show the performance for the Legendre and Jacobi bases respectively.  
As we can see, LSP methods converge rapidly to an irreducible error, and this error decreases as we add basis functions.  Our adaptive method converges rapidly.  
This performance is resonated by \cref{fig:exact} panels (c) and (d), where we 
plot the coefficient profile for $\theta^*_3(\lambda)$ when using $7$ Legendre and Jacobi polynomials.  The approximation improves as we increase the number of iterations, and eventually become closely aligned with the true solution path.

\cref{fig:exact} panel (b) shows that the largest errors occur almost periodically at a few places across $\Lambda = [0,1]$, confirming the equioscillation theorem.

% \Cref{fig:preview} should be contrasted with \cref{fig:exact} panel (a) which uses the Laguerre polynomials in the alternate parameterization.  Because of the non-uniform spacing (in $w(\lambda)$), the discretization approach converges slowly.  Indeed one can see in panel (b) that the largest errors occur near $w(\lambda) = 0$ (corresponding to $\lambda$ large). 
%  By contrast, errors for the Legendre polynomials in the same plot are uniformly small, near the tolerance of the optimization algorithm and so appear as noise.
 
%  Moreover, because the exponential distribution favors small $\lambda$, the LSP methods with Laguerre polynomials have more difficulty approximating the solution path for large $\lambda$ than the Legendre polynomials and, hence, also converge more slowly.  This is most clearly seen by comparing panels (c) and (d) where we 
% plot the coefficient profile for $\theta^*_3(\lambda)$ when using $7$ polynomials.  The approximation improves for both methods as we increase the number of iterations, but in (d), the fit for large $\lambda$ is very poor.  This poor fit drives the slow convergence of the uniform error in panel (a).  

Practically, this experiment suggests that 
% $\mathbb P_\lambda$ should be chosen to favor the regions of $\Lambda$ that are most important to decision-making a priori, and taken to be uniform in the absence of information.  
 when the region of $\Lambda$ is compact, taking a polynomial basis paired with a distribution supported everywhere over $\Lambda$ is favorable.
The polynomial basis can then be taken to be orthogonal to $\mathbb P_\lambda$ as discussed in \cref{sec:Adaptive}.

% Though our theory suggests that a constant learning rate is sufficient to guarantee convergence of the last iterates $\bbeta_t$ for LSP in expectation, in practice, deploying a combination of weighted averaging sum of iterates $\bar\bbeta_t = (1-\frac{2}{t+2})\bbeta_{t-1} + \frac{2}{t+2}\bbeta_t$ (\citep{lacoste2012simpler} Section 3.2) and adaptively adjusting learning rate according to the Distance Diagnostic (\citet{pesme2020convergence} Section 4) help reduce stochastic uncertainty (\cref{fig:err_along_soln_path_jacobi}). With a constant learning rate, the error exhibits a random walk around the plateau value; by adjusting the learning rate, we can stabilize the error around the threshold. 
% We use ($r=0.97$, $q=1.3$, $k_0=5$, $thresh=0.6$) for the Legendre, and ($r=0.97$, $q=1.5$, $k_0=5$, $thresh=0.6$) for the Laguerre bases, which are very 
% similar to the parameters used in (\citet{pesme2020convergence} Section 4). 

% In \cref{fig:CPP_exact_legendre} and \cref{fig:CPP_exact_jacobi}, which consider the first 7 polynomials, we plot the coefficient profiles across $\Lambda \subseteq \bbR$ for weighted binary classification to directly observe the convergence behavior of LSP.  In both figures, we observe convergence of the coefficient profiles as more iterations are run and, we also see a gap between the simulated and true coefficients due to the mispecification error of the basis function class as predicted by our theoretical results. 

\subsection{Portfolio Allocation}
\label{sec:portfolioAlloc}
We next consider a portfolio allocation problem calibrated to real data where $\btheta \in \mathbb R^d$ represents the weights on $d=10$ different asset classes.  Namely, let $\mu \in \mathbb R^d$ and $\Sigma \in \mathbb R^{d \times d}$ be the mean and covariance matrix of the returns of the different asset classes. We fit these parameters to the monthly return data from Aug. 2014 to July 2024 using the Fama-French 10 Industry index dataset.\footnote{\url{https://mba.tuck.dartmouth.edu/pages/faculty/ken.french/data_library.html}}

We then solve 
\begin{equation*}
    \begin{aligned}
        \min_{\btheta}& \quad -\lambda_1\mu^\top \btheta + \lambda_2\btheta^\top \Sigma \btheta + \bar \ell_{1}(\btheta). 
    \end{aligned}
\end{equation*}
Here the first term represents the (negative) expected return, the second represents the risk (variance) and the third is a smoothed version $\ell_1$ regularization to induce sparsity.  Namely, 
$\bar \ell_{1}(\btheta) := \sum_{i = 1}^d \sqrt{\theta_i^2 + .01^2} - .01.$
The parameters $\lambda_1 \in [0, 1]$ and $\lambda_2 \in [0.2, 1]$ control the tradeoffs in these multiple objectives.

% In a more complicated $\Lambda \subseteq \bbR^2$ setting, we investigate a portfolio allocation problem.
%     Let $\theta = (\theta_1, \ldots, \theta_d)$ be weights on $d$ different assets. We use the monthly return data from Aug 2014 to July 2024 on 10 industries  (). Let $\mu$ and $\Sigma$ be the mean vector and covariance matrix of monthly returns of the different assets. $l_{1 \text{, smooth}}(\theta) := \sum_{i = 1}^d \sqrt{\theta_i^2 + .01^2} - .01$ denote a smoothened $l_1$-norm regulation. For $\lambda_1\in [0, 1],\ \lambda_2 \in [.2, 1]$, we consider the optimization problem that balances expected return, risk (variance), and transaction cost ($l_1$ regulation):
%     \begin{equation*}
%     \begin{aligned}
%         \min_{\theta}& \quad -\lambda_1\mu^\top \theta + \lambda_2\theta^\top \Sigma \theta + l_{1 \text{, smooth}}(\theta) . 
%         % \\
%         % \text{s.t.}& \quad \sum_{i = 1}^d \theta_i = 1.
%     \end{aligned}
%     \end{equation*}

Ground truth $\btheta^*(\lambda)$ is computed over a $100\times100$ grid.

We focus on bivariate-Legendre polynomials for our basis, scaled and shifted to be orthogonal to uniform distribution on $[0,1] \times [.2,1]$.  We initialize with ALSP with 2 polynomials in each dimension, and iterate by adding one polynomial in each dimension, so that $q = 4, 9, 16, 25$, after which we stop.

Unlike our previous experiment, to showcase that the qualitative insights of our theory hold for other algorithms other than SGD, we use \texttt{torch.optim.LBFGS} for LSP, ALSP and uniform discretization (for a fair comparison).  Unlike SGD, L-BFGS uses both function and gradient evaluations.  We restrict it to use only $10$ function calls per gradient step so that the total work is still proportional to the number of gradient calls.

% and thus, we record the computation cost as the number of function objective evaluations. 

% The true optimums are computed with the \texttt{scipy.optimize.minimize} trust-region solver over a $100 \times 100$ grid. 
% Similar to the weighted binary classification problem, we approximate their resultant solution path error $\epsilonsp(\cdot)$ by comparing to this fine grid of true optimum, and we record the total computation cost across all grid points for the uniform discretization scheme. However, unlike the previous problem, we use \texttt{torch.optim.BFGS} for both LSP and uniform discretization, and thus, we record the computation cost as the number of function objective evaluations.

\begin{figure}[hbt!] 
\begin{center}
\includegraphics[width=.4\textwidth]{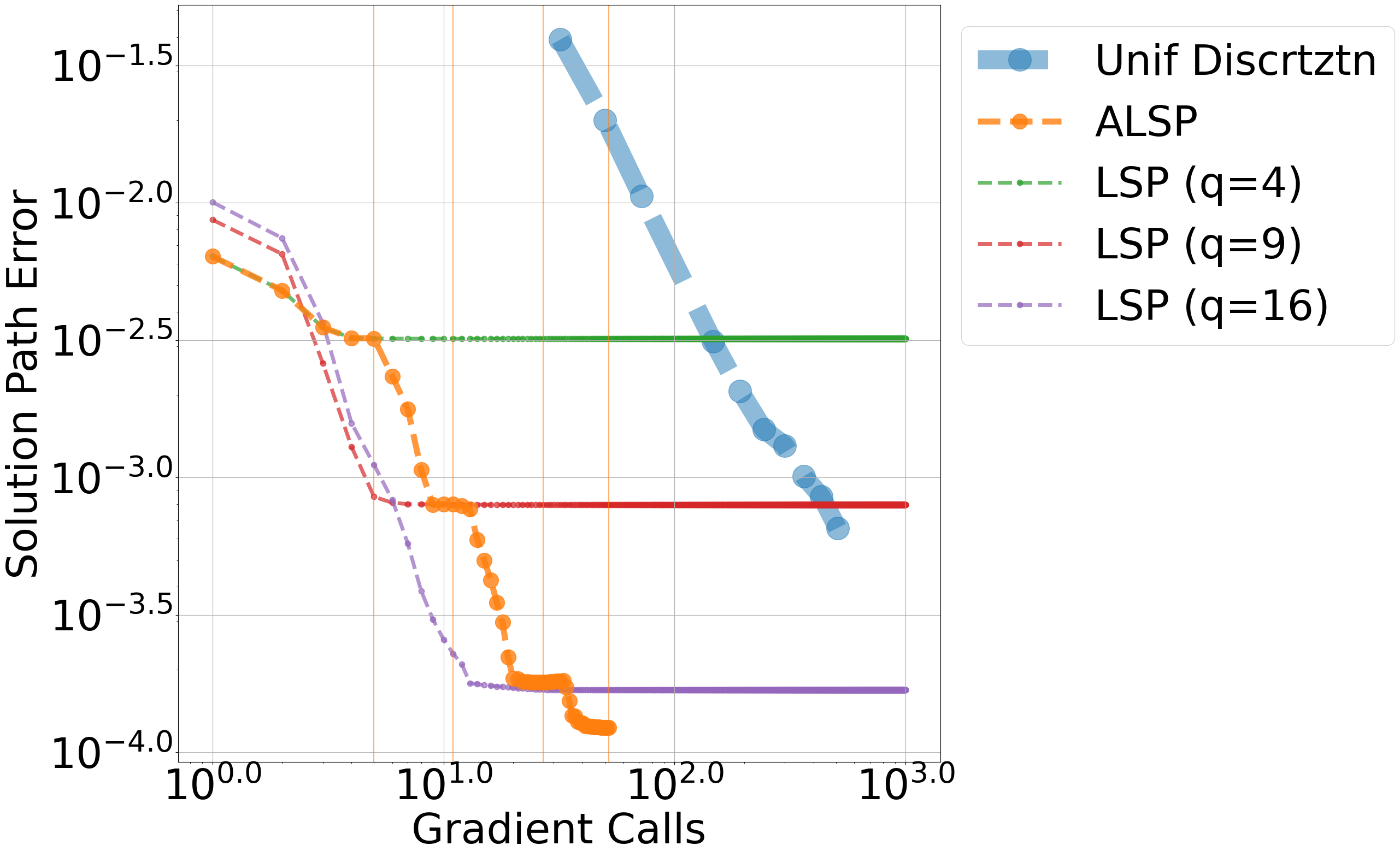}
\end{center}
\caption{{\bf LSP and ALSP for Portfolio Allocation.} Compares methods using the first $q = 4, 9, 16$ bivariate-Legendre polynomials.
\label{fig:boosted_and_fixed_exact_2d}}
\end{figure}

As in our previous experiment, we see that LSP rapidly converges to an irreducible error and then plateaus.  By contrast, ALSP seems to make continued progress as we add polynomials. 
Both methods substantively outperform uniform discretization.

The improved performance over uniform discretization is partially attributable to the increased dimension of $\Lambda$ (because discretization suffers from the curse of dimensionality), but is also because (traditionally) discretization interpolates solutions in a piecewise constant fashion.  In this example, $\btheta^*(\lambda)$ is very smooth. See \cref{fig:true_soln_path_2d} in Appendix.  Hence, polynomial can learn to interpolate values very fast, while uniform discretization needs a great deal more resolution.

{
\subsection{Portfolio Allocation with Moderate Dimensional $\Lambda$}\label{sec:highDim}

We next modify the portfolio allocation problem in \cref{sec:portfolioAlloc} to include transaction costs relative to the current portfolio as follows:  
\[
h(\theta, \lambda) \ = \ 
-\lambda_1 \cdot \mu^\top \theta + \lambda_2 \cdot \theta^\top \Sigma \theta + \|\theta - \lambda_{3:12}\|_2^2.
\]
Here $\lambda_{3:12} \in  \mathbb R^{10}$ represents the current portfolio holding, and $\lambda \in \mathbb R^{12}$.
We use the following basis
\[
\lambda_{(j \text{ mod } 12)} \cdot \lambda_2^{\lfloor j/12 \rfloor}, \ \ j=1, \ldots, q,
\]
for each component of $\thetahat_i(\cdot)$, for $i=1, \ldots, 10$, and let $q = 12, 24, 36$ in our experiments.  
% In other words, we choose $\bPhi(\lambda) \in \mathbb R^{10 \times 10q}$ to be block-diagonal with $10$ blocks of size $1 \times q$. Denote this block by $\psi(\lambda) \in \mathbb R^{1 \times q}$, and the elements of $\psi(\lambda)$ are
% \[
% \psi_j (\lambda) = \lambda_{(j\bmod 12)} \cdot \lambda_2^{\lfloor{j / 12}\rfloor}
% \qquad
% \text{for } j = 1, ..., q.
% \]
As $q\rightarrow \infty$, this basis contains the optimal solution path in its span for any choice of $\mu, \Sigma$ (c.f. \cref{sec:taylorExpansion}).  

For simplicity, we ran LSP using \texttt{torch.optim.LBFGS} to solve an ERM version of the problem in Step 6 of \cref{alg:adaptiveSgd} on a training set of size $1000$, and present results for an independent validation set of size $1000$.  
% Please see \url{https://ibb.co/5s5Rszr} for performance on the training set by iteration of L-BFGS, and \url{https://ibb.co/MNdmDjQ} for performance on the validation set.

Discretization on a $12$-dimensional grid is computationally challenging.  Hence, to assess our method we instead compute an ERM approximation to $\Eb{h(\btheta^*(\lambdarv), \lambdarv)}$ over the training/validation sets, i.e., compute $\frac{1}{1000} \sum_{i=1}^{1000} h(\btheta^*(\lambda_i), \lambda_i)$ by solving $1000$ separate optimization problems.  This value is the dotted grey line in \cref{fig:erm}.  The smaller the gap, the closer $\btheta^*(\lambda_i)$ is to the span of the basis.

Similar to previous experiments, as the basis size increases, performance improves, and even with a fairly small basis, the error is quite small.  
%While not a completely exhaustive experiment, we hope this gives some evidence that our method scales reasonably in $\text{dim}(\Lambda)$.  

\begin{figure}
     \centering
          \begin{subfigure}[b]{0.2375\textwidth}
         \centering
         \includegraphics[width=\textwidth]{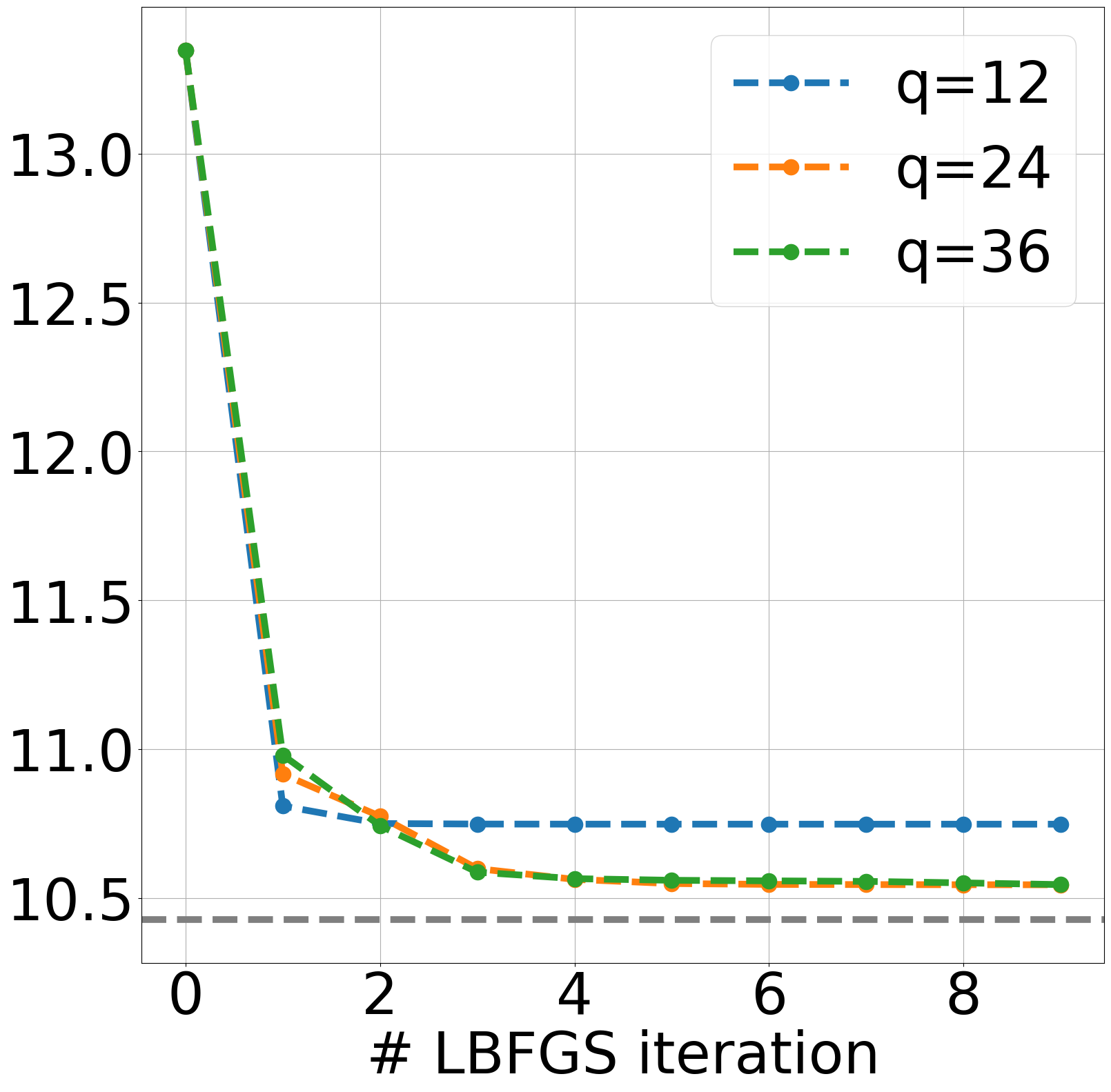}
         \caption{}
         \label{fig:high_dim_train}
     \end{subfigure}
     \hfill
    \begin{subfigure}[b]{0.2375\textwidth}
         \centering
         \includegraphics[width=\textwidth]{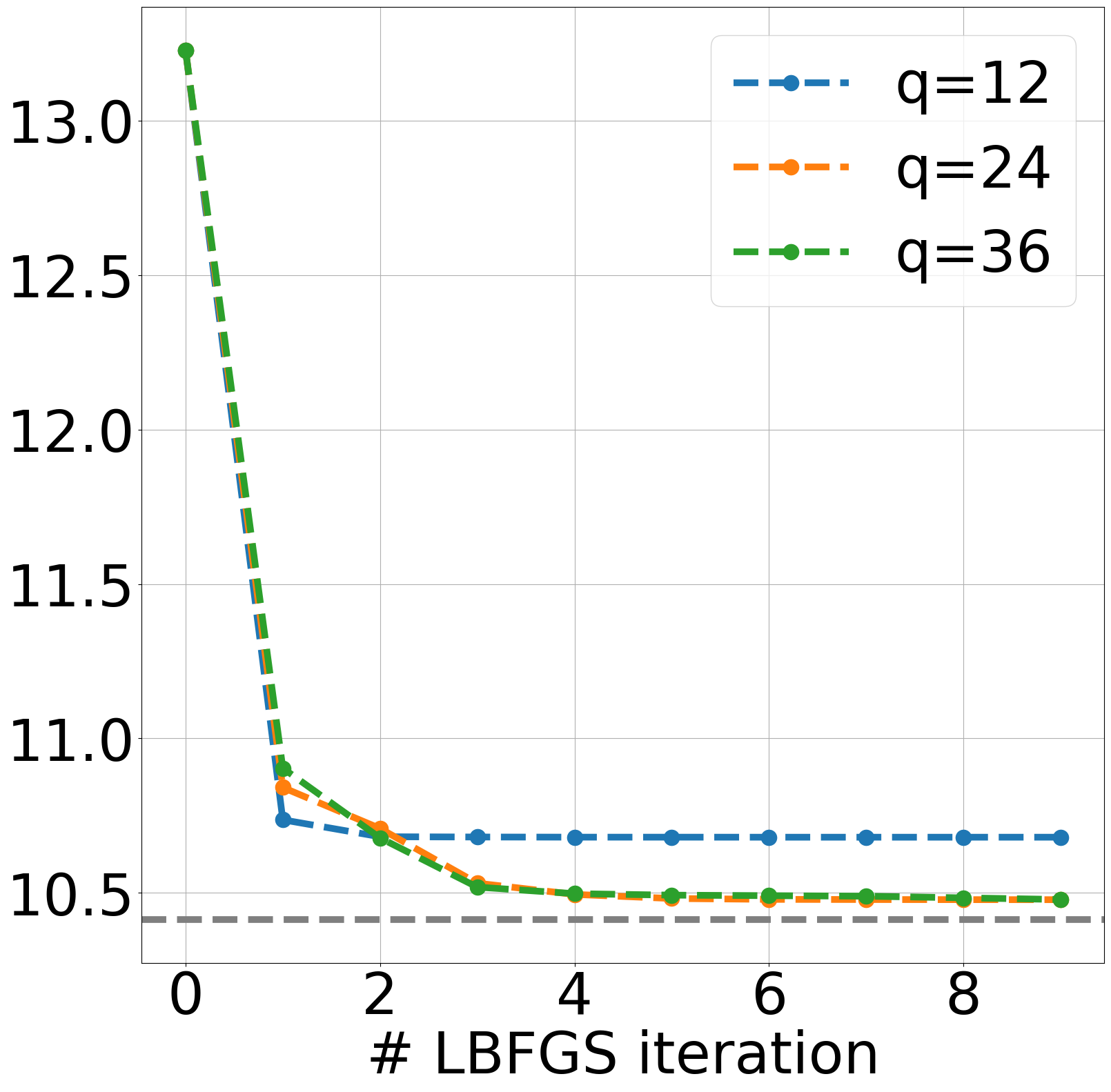}
         \caption{}
         \label{fig:high_dim_val}
     \end{subfigure}

     \caption{{\bf Portfolio Allocation,  $\dim(\Lambda) = 10$.} 
    Comparing the ERM objective for varying $q$.  Panel a) training, Panel b) validation.
    }
    \label{fig:erm}
\end{figure}
}

%!TEX root = 0_Main.tex

\section{CONCLUSION}

We propose a new method for learning the optimal solution path of a family of problems by reframing it as a single stochastic optimization problem over a linear combination of pre-specified basis functions. Compared to discretization schemes, our method offers flexibility and scalability by taking a global perspective on the solution path.  
We prove that our problem satisfies a certain relaxed weak-growth condition that allows us to solve the single optimization problem very efficiently when using sufficiently rich bases. Theoretical results in special cases and numerical experiments in more general settings support these findings.
Future research might more carefully examine the interplay between the parameterization of the family of problems and the choice of basis. One might also consider other universal function approximators (e.g. deep neural networks, forests of trees) within this context.

%%%%%%%%%%%%%%%%%%%%%%%%%%%%%%%%%%%%%%%%%%%%%%%%%%%%%%%%%%%%

%\section*{References}

\newpage
{
\subsubsection*{Acknowledgements}
PG acknowledges the support of the NSF AI Institute for Advances in Optimization, Award 2112533. The authors thank the anonymous reviewers for their thoughtful and constructive comments.
}

\bibliographystyle{abbrvnat}
\bibliography{refs}

% \begin{thebibliography}{}
% \setlength{\itemindent}{-\leftmargin}
% \makeatletter\renewcommand{\@biblabel}[1]{}\makeatother
% \bibitem{} J.~Alspector, B.~Gupta, and R.~B.~Allen (1989).
%     \newblock Performance of a stochastic learning microchip.
%     \newblock In D. S. Touretzky (ed.),
%     \textit{Advances in Neural Information Processing Systems 1}, 748--760.
%     San Mateo, Calif.: Morgan Kaufmann.

% \bibitem{} F.~Rosenblatt (1962).
%     \newblock \textit{Principles of Neurodynamics.}
%     \newblock Washington, D.C.: Spartan Books.

% \bibitem{} G.~Tesauro (1989).
%     \newblock Neurogammon wins computer Olympiad.
%     \newblock \textit{Neural Computation} \textbf{1}(3):321--323.
% \end{thebibliography}

%%%%%%%%%%%%%%%%%%%%%%%%%%%%%%%%%%%%%%%%%%%%%%%%%%%%%%%%%%%%

\newpage
\onecolumn
\appendix

\section*{Checklist}
% %%% BEGIN INSTRUCTIONS %%%
% The checklist follows the references. For each question, choose your answer from the three possible options: Yes, No, Not Applicable.  You are encouraged to include a justification to your answer, either by referencing the appropriate section of your paper or providing a brief inline description (1-2 sentences). 
% Please do not modify the questions.  Note that the Checklist section does not count towards the page limit. Not including the checklist in the first submission won't result in desk rejection, although in such case we will ask you to upload it during the author response period and include it in camera ready (if accepted).

% \textbf{In your paper, please delete this instructions block and only keep the Checklist section heading above along with the questions/answers below.}
% % %%% END INSTRUCTIONS %%%

 \begin{enumerate}

 \item For all models and algorithms presented, check if you include:
 \begin{enumerate}
   \item A clear description of the mathematical setting, assumptions, algorithm, and/or model. [Yes]
   \item An analysis of the properties and complexity (time, space, sample size) of any algorithm. [Yes]
   \item (Optional) Anonymized source code, with specification of all dependencies, including external libraries. [Yes]
 \end{enumerate}

 \item For any theoretical claim, check if you include:
 \begin{enumerate}
   \item Statements of the full set of assumptions of all theoretical results. [Yes]
   \item Complete proofs of all theoretical results. [Yes]
   \item Clear explanations of any assumptions. [Yes]     
 \end{enumerate}

 \item For all figures and tables that present empirical results, check if you include:
 \begin{enumerate}
   \item The code, data, and instructions needed to reproduce the main experimental results (either in the supplemental material or as a URL). [Yes]
   \item All the training details (e.g., data splits, hyperparameters, how they were chosen). [Yes]
         \item A clear definition of the specific measure or statistics and error bars (e.g., with respect to the random seed after running experiments multiple times). [Not Applicable] 
         \item A description of the computing infrastructure used. (e.g., type of GPUs, internal cluster, or cloud provider). [Not Applicable]
 \end{enumerate}

 \item If you are using existing assets (e.g., code, data, models) or curating/releasing new assets, check if you include:
 \begin{enumerate}
   \item Citations of the creator If your work uses existing assets. [Yes]
   \item The license information of the assets, if applicable. [Yes]
   \item New assets either in the supplemental material or as a URL, if applicable. [Not Applicable]
   \item Information about consent from data providers/curators. [Not Applicable]
   \item Discussion of sensible content if applicable, e.g., personally identifiable information or offensive content. [Not Applicable]
 \end{enumerate}

 \item If you used crowdsourcing or conducted research with human subjects, check if you include:
 \begin{enumerate}
   \item The full text of instructions given to participants and screenshots. [Not Applicable]
   \item Descriptions of potential participant risks, with links to Institutional Review Board (IRB) approvals if applicable. [Not Applicable]
   \item The estimated hourly wage paid to participants and the total amount spent on participant compensation. [Not Applicable]
 \end{enumerate}

 \end{enumerate}

\newpage
%!TEX root = 0_Main.tex

% \input{Picking Stepsize}

% \input{Noisy Gradients}
\section{Omitted Proofs}

\subsection{Proofs from \cref{sec:Model}}

\begin{proof}[Proof for \Cref{lma:cPositive}.]
    Suppose by contradiction that $c = 0$.  Then there exists a corresponding eigenvector $\bv \in \bbR^p$ with $\bv \neq \boldsymbol 0$ such that
    \[
    \bbE_{\lambdarv\sim \bbP_\lambda}\left[ \bPhi(\lambdarv)^\top \bPhi(\lambdarv) \right] \bv  =  \boldsymbol 0 \implies
%    \bbE_{\lambdarv}\left[ \bv^\top \bPhi(\lambdarv)^\top \bPhi(\lambdarv) \bv \right]
%     \ = \     
\bbE_{\lambdarv\sim \bbP_\lambda}\left[ \|  \bPhi(\lambdarv) \bv \|^2_2 \right] = 0.
    \]
This further implies that $\bPhi(\lambdarv) \bv = 0$  almost everywhere. By the linear independence assumption in \cref{assum:components}, we then must have $\bv = \boldsymbol 0$, but this is a contradiction since $\bv$ is an eigenvector.  

For the second statement, notice
\[
c 
\leq 
\sigma_{\max}\left\{ \mathbb E_{\lambdarv\sim \bbP_\lambda}\left[ \bPhi(\lambdarv)^\top \bPhi(\lambdarv) \right] \right\}
\leq 
\mathbb E_{\lambdarv\sim \bbP_\lambda} \left[ \sigma_{\max} \left\{ \bPhi(\lambdarv)^\top \bPhi(\lambdarv) \right\}\right]
\leq 
C,
\]
where the penultimate inequality follows from Jensen's inequality and the convexity of the maximal eigenvalue function.
\end{proof}

Next, recall the family of functions $f(\cdot, \lambda)$ and the function $F(\cdot)$ defined in \eqref{eq:auxiliary}: 
\begin{equation*}
    f(\bbeta, \lambda) ~:=~ h(\bPhi(\lambda)\bbeta, \lambda), \ \ \ \
    F(\bbeta) ~:=~ \bbE_{\lambdarv \sim \bbP_\lambda}\left[f(\bbeta, \lambdarv)\right] = \bbE_{\lambdarv\sim \bbP_\lambda}\left[h(\bPhi(\lambdarv)\bbeta, \lambdarv)\right].
\end{equation*}
Before proceeding, we establish the smoothness and strong convexity of these functions.

\begin{proposition}[{\bf Uniform Smoothness and Strong Convexity}]\label{prop:regularity}
The family of functions $f(\cdot, \lambda)$, over all $\lambda \in \Lambda$, is $CL$-smooth. Moreover, $F(\cdot)$ is $CL$-smooth and $c\mu$-strongly convex.
\end{proposition}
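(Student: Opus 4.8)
\textbf{Proof proposal for \Cref{prop:regularity}.}

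The plan is to push the curvature bounds on $h(\cdot, \lambda)$ from \Cref{assu:regularity} through the linear map $\btheta = \bPhi(\lambda)\bbeta$ and then take expectations over $\lambdarv \sim \bbP_\lambda$. Recall that if $g$ is $L$-smooth and we precompose with a linear map $A$, then $g \circ A$ has Hessian $A^\top (\nabla^2 g) A$, whose spectral norm is at most $\|A\|^2 \cdot L$; an analogous lower bound $\mu \cdot \sigma_{\min}(A^\top A)$ holds for strong convexity \emph{provided} $A$ has full column rank. The quantities $C$ and $c$ from \Cref{def:bigClittleC} are precisely designed to capture $\sup_\lambda \|\bPhi(\lambda)\|^2$ and a lower bound on the relevant smallest eigenvalue after averaging, so the statement should fall out cleanly.

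First I would handle the smoothness of each $f(\cdot, \lambda)$. Using the $L$-smoothness upper bound in \Cref{assu:regularity} applied at the points $\bPhi(\lambda)\bbeta$ and $\bPhi(\lambda)\bar\bbeta$, and the identity $\nabla_\bbeta f(\bbeta,\lambda) = \bPhi(\lambda)^\top \nabla_\btheta h(\bPhi(\lambda)\bbeta, \lambda)$, I would bound
$f(\bbeta,\lambda) - f(\bar\bbeta,\lambda) - \nabla_\bbeta f(\bar\bbeta,\lambda)^\top(\bbeta - \bar\bbeta) \leq \frac{L}{2}\|\bPhi(\lambda)(\bbeta - \bar\bbeta)\|^2 \leq \frac{L}{2}\sigma_{\max}(\bPhi(\lambda)^\top\bPhi(\lambda))\|\bbeta-\bar\bbeta\|^2 \leq \frac{CL}{2}\|\bbeta - \bar\bbeta\|^2$, giving $CL$-smoothness uniformly in $\lambda$. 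Then $CL$-smoothness of $F$ follows immediately: the inequality is preserved under $\bbE_{\lambdarv \sim \bbP_\lambda}[\cdot]$ since the right-hand side $\frac{CL}{2}\|\bbeta - \bar\bbeta\|^2$ does not depend on $\lambda$, and expectation commutes with the (linear in $h$) gradient expression defining $\nabla F$.

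For strong convexity of $F$, I would start from the $\mu$-strong convexity lower bound of $h$ at $\bPhi(\lambdarv)\bbeta$ and $\bPhi(\lambdarv)\bar\bbeta$, obtaining $f(\bbeta,\lambdarv) - f(\bar\bbeta,\lambdarv) - \nabla_\bbeta f(\bar\bbeta,\lambdarv)^\top(\bbeta-\bar\bbeta) \geq \frac{\mu}{2}\|\bPhi(\lambdarv)(\bbeta - \bar\bbeta)\|^2 = \frac{\mu}{2}(\bbeta - \bar\bbeta)^\top \bPhi(\lambdarv)^\top\bPhi(\lambdarv)(\bbeta-\bar\bbeta)$. Taking expectations, the right-hand side becomes $\frac{\mu}{2}(\bbeta-\bar\bbeta)^\top \bbE_{\lambdarv\sim\bbP_\lambda}[\bPhi(\lambdarv)^\top\bPhi(\lambdarv)](\bbeta - \bar\bbeta) \geq \frac{\mu}{2}\sigma_{\min}\{\bbE_{\lambdarv\sim\bbP_\lambda}[\bPhi(\lambdarv)^\top\bPhi(\lambdarv)]\}\|\bbeta-\bar\bbeta\|^2 = \frac{c\mu}{2}\|\bbeta - \bar\bbeta\|^2$, which is exactly $c\mu$-strong convexity. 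The main subtlety — and the one place where care is needed — is that the individual functions $f(\cdot,\lambda)$ are \emph{not} in general strongly convex (the matrix $\bPhi(\lambda)^\top\bPhi(\lambda)$ can be rank-deficient, indeed it has rank at most $d$ while $\bbeta \in \bbR^p$), so strong convexity is genuinely a property of the averaged function $F$ and crucially relies on $c > 0$, which is guaranteed by \Cref{lma:cPositive} under \Cref{assum:components}. I would make sure to invoke \Cref{lma:cPositive} explicitly so the claimed strong convexity constant $c\mu$ is positive.
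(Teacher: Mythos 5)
Your proposal is correct and follows essentially the same route as the paper: push the curvature bounds of $h$ through the linear map $\bPhi(\lambda)$, then take expectations and use the eigenvalue constants $C$ and $c$ (with $c>0$ from \cref{lma:cPositive}). The only cosmetic difference is that you certify smoothness via the quadratic upper bound on the Bregman divergence (matching \cref{assu:regularity} directly), whereas the paper verifies Lipschitz continuity of the gradient; these are equivalent here and the strong-convexity argument is identical.
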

\begin{proof}[Proof for \Cref{prop:regularity}.]
    Fix an arbitrary $\lambda\in\Lambda$. Then, for any $\bbeta$, $\bar{\bbeta} \in \bbR^p$, smoothness of $f(\cdot, \lambda)$ is certified by
    \begin{align*}
        \|\nabla f(\bbeta, \lambda) - \nabla f(\bar{\bbeta}, \lambda)\|
        & = \|\nabla_{\bbeta} h(\bPhi(\lambda)\bbeta, \lambda) - \nabla_{\bbeta} h(\bPhi(\lambda)\bar{\bbeta}, \lambda)\| 
    \\
        &\ \leq \|\bPhi(\lambda)^\top (\nabla_{\btheta} h(\bPhi(\lambda)\bbeta, \lambda) - \nabla_{\btheta} h(\bPhi(\lambda)\bar{\bbeta}, \lambda))\| &&(\text{chain rule})
    \\
        &\ \leq \|\bPhi(\lambda)^\top\|\cdot L\|\bPhi(\lambda)(\bbeta - \bar{\bbeta})\| &&(\text{$L$-smoothness})
    \\
        &\ \leq CL\|\bbeta - \bar{\bbeta}\| 
    \end{align*}
    for any $\bbeta$, $\bar{\bbeta} \in \bbR^d$. To obtain the same conclusions for $F(\cdot)$, we may take expectation over $\lambda \sim \bbP_\lambda$ on the above inequalities, and invoke the linearity of expectations as well as the property that
    \[
    \|\Eb{g(\cdot)}\| 
    \ \leq \  
    \Eb{\|g(\cdot)\|}.
    \]
    Next, we verify the strong convexity of $F(\cdot)$. Fixing $\bbeta$, $\bar{\bbeta} \in \bbR^d$, for any $\lambda\in\Lambda$, define $\btheta(\lambda): = \bPhi(\lambda)\bbeta$, $\bar{\btheta}(\lambda): = \bPhi(\lambda)\bar{\bbeta}$. Then, by strong convexity of $h$, we have
    \[
    h(\bar{\btheta}(\lambda), \lambda)
    \geq 
    h(\btheta(\lambda),\lambda) + \nabla_{\btheta} h(\btheta(\lambda), \lambda)^\top \left( \bar{\btheta}(\lambda) - \btheta(\lambda)\right) + \frac{\mu}{2}\|\bar{\btheta}(\lambda) - \btheta(\lambda)\|^2.
    \]
    Take expectation w.r.t. $\tilde\lambda \sim \bbP_\lambda$ on both sides, we obtain
    \begin{align*}
        F(\bar{\bbeta}) 
        &= \bbE\left[h(\bPhi(\lambdarv)\bar{\bbeta}, \lambdarv)\right]
    \\
        &\ \geq \bbE\left[h(\bPhi(\lambdarv)\bbeta, \lambdarv)\right] + \bbE\left[\nabla_{\btheta} h(\btheta(\lambdarv), \lambdarv)^\top \left(\bPhi(\lambdarv)(\bar{\bbeta} - \bbeta)\right)\right] + \frac{\mu}{2}\bbE\left[\|\bPhi(\lambdarv)(\bar{\bbeta} - \bbeta)\|^2\right]
    \\
        &\ =
        F(\bbeta) + \bbE\left[\bPhi(\lambdarv)^\top \nabla_{\btheta} h(\btheta(\lambdarv), \lambdarv)\right]^\top (\bar{\bbeta} - \bbeta) + \frac{\mu}{2}(\bar{\bbeta} - \bbeta)^\top \bbE\left[\bPhi(\lambdarv)^\top \bPhi(\lambdarv)\right](\bar{\bbeta} - \bbeta)
    \\
        &\ \geq  F(\bbeta) + \nabla F(\bbeta)^\top (\bar{\bbeta} - \bbeta) + \frac{c\mu}{2}(\bar{\bbeta} - \bbeta)^\top (\bar{\bbeta} - \bbeta)
    \end{align*}
    The second term in the last inequality comes from the Leibniz integral rule and the third term invokes a well-known property of the smallest eigenvalue of a positive definite matrix.
\end{proof}

\begin{proof}[{\bf Proof for \Cref{thm:decompose}.}]\label{pf:thmDecompose}

    % First, fix an arbitrary $\bar\epsilon>0$, we can pick a $\betasp \in \bbR^p$ s.t.
    % \[
    %      \epsilonsp(\betasp) < \epsilonsp^\ast + \bar\epsilon.
    % \]
    First, we prove that $\epsilonsp^\ast$ is attained at some $\betasp \in \bbR^p$.
    Observe that $\epsilonsp(\cdot)$ is continuous. Thus, its level set $M(1) := \{\bbeta\in \bbR^p: \epsilonsp(\bbeta) \leq 1\}$ is closed. 
    Furthermore, $M(1)$ is bounded. To see this, 
    % fix an arbitrary $\lambda \in \Lambda$, then there is some $\bbeta_\lambda$ s.t. $\bPhi(\lambda)\bbeta_\lambda = \btheta^*(\lambda)$. By definition, $\bbeta_\lambda \in M(1)$.
    % Now 
    for any $\bbeta\in M(1)$, by strong convexity of $F$,
    \begin{align*}
        \frac{c\mu}{2}\|\bbeta - \betaavg\|^2
    & 
    ~\leq~
        F(\bbeta) - F(\betaavg)\\
    & 
    ~=~
        \bbE_{\lambdarv\sim \bbP_\lambda}\left[h(\bPhi(\lambdarv)\bbeta, \lambdarv) - h(\bPhi(\lambdarv)\betaavg, \lambdarv)\right]\\
    & 
    ~\leq~ 
        \bbE_{\lambdarv\sim \bbP_\lambda}\left[h(\bPhi(\lambdarv)\bbeta, \lambdarv) - h(\btheta^*(\lambdarv), \lambdarv)\right]\\
    &
    ~\leq~
        \epsilonsp(\bbeta)\tag{\theequation}\label{usefulEq}\\
    & ~\leq~ 
        1.
    \end{align*}
    Since $\bbeta$ is arbitrary, for any $\bbeta$, $\bar{\bbeta} \in M(1)$,
    \[
        \|\bbeta - \bar{\bbeta}\|
    ~\leq~ 
        \|\bbeta - \betaavg\| + \|\bar{\bbeta} - \betaavg\|
    ~\leq~ 
        \frac{2\sqrt{2}}{\sqrt{c\mu}}.
    \]
    Applying the Weierstrass Theorem, there exists $\betasp \in M(1) \subset \bbR^p$ s.t. 
    \[
        \epsilonsp(\betasp) 
    ~=~
        \epsilonsp^\ast.
    \]

    Next, we prove the main result of the theorem.
    Again, fix an arbitrary $\lambda$. By $L$-smoothness of $h(\cdot, \lambda)$,
    \begin{equation}\label{diffOfh}
        h(\bPhi(\lambda) \bbeta, \lambda) - h(\btheta^*(\lambda), \lambda)
    \ \leq\ 
        \frac{L}{2}\|\bPhi(\lambda)\bbeta - \btheta^*(\lambda)\|^2
    \end{equation}
    Using the identity 
    \(
        (a + b + c)^2 \leq 4a^2 + 4b^2 + 4c^2
    \)
    and triangle inequality on the right-hand side, this inequality becomes
    \[
        h(\bPhi(\lambda) \bbeta, \lambda) - h(\btheta^*(\lambda), \lambda)
    \ \leq\ 
        2L(\|\bPhi(\lambda) (\bbeta - \betaavg)\|^2 + \|\bPhi(\lambda) (\betaavg - \betasp)\|^2 + \|\bPhi(\lambda) \betasp - \btheta^*(\lambda))\|^2).
    \]
% \vg{@Qiran:  I suspect this step of the proof is loose.  The reason is that we first go from a difference of $h$'s in \cref{diffOfh} to a norm using $L$-smoothness, and then we go back from a norm difference to difference of $h$ in the step right below this sentence.  I suspect what we need is some generalization of the identity I wrote to strongly convex functions.}    
    By strong convexity of $h(\cdot, \lambda)$ and definition of $\betasp$,
    \[
        \|\bPhi(\lambda) \betasp - \btheta^*(\lambda)\|^2
    \ \leq\ 
        \frac{2}{\mu} \sup_{\lambda\in\Lambda}\left\{h(\bPhi(\lambda)\betasp, \lambda) - h(\btheta^*(\lambda), \lambda)\right\}
    \ =\ 
        \frac{2}{\mu}\epsilonsp(\betasp)
    \ =\  
        \frac{2}{\mu}\epsilonsp^\ast,
    \]
    So
    \begin{equation}\label{ineq:thm1MidStep}
        \epsilonsp(\bbeta)
    \ \leq\ 
        2CL(\|\bbeta - \betaavg\|^2 + \|\betaavg - \betasp\|^2) + \frac{4L}{\mu}\epsilonsp^\ast.
    \end{equation}
    
    Next, we bound $\|\betaavg - \betasp\|^2$. Observe that due to the optimality of $\btheta^*(\lambda)$ for each $\lambda$,
    \[
        \Eb{h(\bPhi(\lambdarv)\betasp, \lambdarv)} - \Eb{h(\bPhi(\lambdarv)\betaavg, \lambdarv)}
        \ \leq\ \Eb{h(\bPhi(\lambdarv)\betasp, \lambdarv) - h(\btheta^*(\lambdarv), \lambdarv)}
        \ \leq\ 
        \epsilonsp^\ast.
    \]
    On the other hand, from the optimality of $\betaavg$ and strong-convexity of $F(\cdot)$,
    \[
        F(\betasp) - F(\betaavg)
        \ = \ 
        \Eb{h(\bPhi(\lambdarv)\betasp, \lambdarv)} - \Eb{h(\bPhi(\lambdarv)\betaavg, \lambdarv)} 
        \ \geq\ 
        \frac{c\mu}{2} \| \betaavg - \betasp\|^2.
    \]
    
    Combining this inequality with the previous one shows
    \(
        \frac{c\mu}{2} \| \betaavg - \betasp\|^2
    \ \leq \
        \epsilonsp^\ast,
    \)
    which implies that
    \[
        \| \betaavg - \betasp\|^2  \ \leq \ \frac{2\epsilonsp^\ast}{c\mu}.
    \]
    Substitute the above into \Cref{ineq:thm1MidStep}. Take expectations over sample paths on both sides, we conclude that
    \[
        \epsilonsp(\bbeta)
    \ \leq\ 
        2CL\left(\|\bbeta - \betaavg\|^2 + \frac{2\epsilonsp^\ast}{c\mu}\right) + \frac{4L}{\mu}\epsilonsp^\ast.
    \]
    Rearranging and using the fact that $C/c \geq 1$ verifies the statement.
\end{proof}

\subsection{Proofs from \cref{sec:convGuaranteeSGD}}
\begin{proof}[Proof for \Cref{lma:convergeWGC}]
The proof follows the proof of Theorem 3.1 of \citet{gower2019sgd}; we include it for completeness under the assumption of the RWGC \cref{def:rwgc}.
Let $\eta := \frac{\bar\eta}{\rho}$ denote the step-size, $\bw^*:= \arg\min_{\bw \in \bbR^d} G(\bw)$ denote the optimality input of $G$. For $t \geq 0$, we have
\begin{align*}
\|\bw_{t+1} - \bw^*\|^2 &= \|\bw_{t} - \bw^* - \eta\nabla g(\bw_t, z_t)\|^2 \\
&= \|\bw_{t} - \bw^*\|^2 -2\eta\nabla g(\bw_t, z_t)^\top (\bw_{t} - \bw^*) + \eta^2\|\nabla g(\bw_t, z_t)\|^2.
\end{align*}
Taking conditional expectations yields
\begin{align}\notag
\bbE\left[\|\bw_{t+1} - \bw^*\|^2 \;\middle|\; \bw_t\right] 
~&=~ \|\bw_{t} - \bw^*\|^2 - 2\eta\nabla G(\bw_t)^\top (\bw_t - \bw^*) + \eta^2\bbE\left[\|\nabla g(\bw_t, z_t)\|^2 \;\middle|\; \bw_t\right] 
\\ \notag
~&\leq~ \|\bw_{t} - \bw^*\|^2 - 2\eta\nabla G(\bw_t)^\top (\bw_t - \bw^*) + \eta^2[2\rho(G(\bw_t) - G^*) + \sigma^2] 
\\ \notag
~&\leq~ \|\bw_{t} - \bw^*\|^2 + 2\eta\left(G^* - G(\bw_t) - \tfrac{\mu_g}{2}\|\bw_t - \bw^*\|^2\right) + \eta^2[2\rho (G(\bw_t) - G^*) + \sigma^2] 
\\ \notag
~&=~ \left(1 - \eta\mu_g\right)\|\bw_t - \bw^*\|^2 + 2\left(\eta^2\rho -\eta\right)(G(\bw_t) - G^*) + \eta^2\sigma^2 \\ \label{eq:usingRWGC}
~&\leq~ \left(1 - \eta\mu_g\right)\|\bw_t - \bw^*\|^2 + \eta^2\sigma^2
% \\ \notag
% &\leq \left(1 - \eta\mu_g\right)\|\bw_t - \bw^*\|^2 + \eta^2\sigma^2.\notag
\end{align}
The first inequality above uses the RWGC and the second uses strong convexity. The final inequality holds since $\eta^2\rho - \eta = \left(\frac{\bar\eta}{\rho}\right)^2\rho - \frac{\bar\eta}{\rho} = \left(\frac{\bar\eta}{\rho}\right)\left(\bar\eta - 1\right)$ and $\bar\eta \leq 1$. Furthermore, since $\bar\eta \leq \frac{\rho}{\mu_g}$, we have $\eta\mu_g = \left(\frac{\bar\eta}{\rho}\right)\mu_g \leq 1$; therefore, 
% , and the third uses $\eta \in \left[0, \min\left\{\frac{1}{\rho L_g}, \frac{1}{\mu_g}\right\}\right]$. 
% Taking overall expectations and iterating the inequalities yields
% \begin{align*}
% \bbE[\|\bw_{t} - \bw^*\|^2] &\leq (1 - \eta\mu_g)^t\|\bw_{0} - \bw^*\|^2 + \eta^2\sigma^2\sum_{k = 0}^{t-1} (1 - \eta\mu_g)^k \\
% &= (1 - \eta\mu_g)^t\|\bw_{0} - \bw^*\|^2 + \frac{\eta^2\sigma^2(1 - (\eta\mu_g)^t)}{1 - \eta\mu_g} \\
% &\leq (1 - \eta\mu_g)^t\|\bw_{0} - \bw^*\|^2 + \frac{\eta^2\sigma^2}{1 - \eta\mu_g}
% \end{align*}
% \end{proof}
% % \vg{@Qiran/@Paul: I think this is a stronger version of Lemma 4.4.  I don't know if it's helpful.  Can you please check?  I wrote it quickly and might have made an error.}
% \begin{proof}[A stronger Version of Lemma 4.4]
% We follow the proof of Lemma 4.4 up until \cref{eq:usingRWGC}.  In the original proof, we use the fact that $\eta \rho L_g - 1 < 0$ to drop the second term.  In this analysis we will keep this term.  
by iterating \cref{eq:usingRWGC} and taking overall expectations, we get
\begin{align*}
\Eb{\| \bw_t - \bw^*\|^2} ~&\leq~ (1-\eta \mu_g)^t \| \bw_0 - \bw^* \|^2 + \eta^2\sigma^2\sum_{k = 0}^{t-1}(1 - \eta\mu_g)^k \\
~&\leq~ (1-\eta \mu_g)^t \| \bw_0 - \bw^* \|^2 + \frac{\eta^2 \sigma^2}{1 - (1 - \eta \mu_g)} \\
~&=~ (1-\eta \mu_g)^t \| \bw_0 - \bw^* \|^2 + \frac{\eta \sigma^2}{\mu_g},
\end{align*}
where the second inequality uses the geometric series bound. Recalling that $\eta = \frac{\bar\eta}{\rho}$ completes the proof.

\end{proof}

\begin{proof}[Proof for \Cref{lma:satisfyWGC}.]
Recall that \cref{prop:regularity} already establishes the smoothness of $f$ and smoothness and strong convexity of $F$.

Define 
    \[
    \betabar(\lambda) \in \arg\min_{\bbeta\in\bbR^p} f(\bbeta, \lambda) = \arg\min_{\bbeta\in\bbR^p} h(\bPhi(\lambda) \bbeta, \lambda).
    \]
    % \paul{$f$ is not L-smooth. Should this proof be rewritten in terms of $h$? Doing it that way, we should be able to remove $\bar{\beta}(\lambda)$.}
    By $CL$-smoothness, we have
    \begin{equation*}
        f(\bbeta, \lambda) \geq f(\betabar(\lambda), \lambda) + \nabla f(\betabar(\lambda), \lambda)^\top(\bbeta - \betabar(\lambda)) + \frac{1}{2CL}\|\nabla f(\bbeta, \lambda) - \nabla f(\betabar(\lambda), \lambda)\|^2.
    \end{equation*}
    Using $\nabla f(\betabar(\lambda), \lambda) = 0$ for all $\lambda$, rearranging and recalling the definitions of $\betaavg$ (\cref{sec:Model}) and $\betasp$ (proof of \cref{thm:decompose}) yields
    \begin{align*}
        \frac{1}{2CL}\|\nabla f(\bbeta, \lambda)\|^2 ~&\leq~ f(\bbeta, \lambda) - f(\betabar(\lambda), \lambda) 
    \\
        ~&\leq~ f(\bbeta, \lambda) - h(\btheta^*(\lambda), \lambda)
    \\
        ~&=~ (f(\bbeta, \lambda) - f(\betaavg, \lambda)) + (f(\betaavg, \lambda) - f(\betasp, \lambda)) + (f(\betasp, \lambda) - h(\btheta^*(\lambda), \lambda))
    \\ 
        ~&\leq~ (f(\bbeta, \lambda) - f(\betaavg, \lambda)) + (f(\betaavg, \lambda) - f(\betasp, \lambda)) + \epsilonsp^\ast
    \end{align*}
    % Taking expectations on both sides and applying Lemma \ref{lma:ebError} yields the desired result, namely
    % \[
    % \frac{1}{2L}\bbE_{\lambdarv}\left[\|\nabla f(\bbeta,\lambdarv)\|^2\right] \leq F(\bbeta) - F(\betaavg) + \frac{L}{2}\epsilon_A^2
    % \]
    where the second inequality uses optimality of $\btheta^*(\lambda)$ and the third uses that $\epsilonsp^\ast$ is attained at $\betasp$.
    Taking expectations on both sides, we get
    \begin{align*}
        \frac{1}{2CL}\bbE\left[\|\nabla f(\bbeta,\lambdarv)\|^2\right] 
        \ \leq\ F(\bbeta) - F(\betaavg) + F(\betaavg) - F(\betasp) + \epsilonsp^\ast \ \leq\ F(\bbeta) - F(\betaavg) + \epsilonsp^\ast,
    \end{align*}
    where we used that the definition of $\betaavg$ implies that $F(\betaavg) \leq F(\betasp)$. Multiplying through by $2CL$ completes the proof.
\end{proof}

%%%%%%% PG: Commented out this old proof %%%%%%%
% \paul{OLD:}
%     Taking expectations on both sides, we get
%     \begin{align*}
%         \frac{1}{2CL}\bbE\left[\|\nabla f(\bbeta,\lambdarv)\|^2\right] 
%         \ &\leq\ F(\bbeta) - F(\betaavg) + \left(\Eb{h(\bPhi(\lambdarv)\betaavg, \lambdarv)} - \Eb{h(\btheta^*(\lambdarv), \lambdarv)}\right) 
%     \\
%         \ &=\ F(\bbeta) - F(\betaavg) + \left(\Eb{h(\bPhi(\lambdarv)\betaavg, \lambdarv)} - \Eb{h(\bPhi(\lambdarv)\betasp, \lambdarv)}\right) 
%     \\
%         & \qquad + \left(\Eb{h(\bPhi(\lambdarv)\betasp, \lambdarv) - h(\btheta^*(\lambdarv), \lambdarv)}\right).
%     \end{align*}
    
%      Notice that by definition of $\betaavg$, for any $\bbeta\in \bbR^p$ we have
%     \begin{equation}\label{ineq:lma3Midstep}
%         \Eb{h(\bPhi(\lambdarv)\betaavg, \lambdarv)} - \Eb{h(\bPhi(\lambdarv)\bbeta, \lambdarv)}\leq 0.
%     \end{equation}
    
%     As shown in the proof of \cref{thm:decompose}, $\epsilonsp^\ast$ is attained at some $\betasp\in\bbR^p$. Thus, 
%     \begin{align*}
%         \frac{1}{2CL}\bbE\left[\|\nabla f(\bbeta,\lambdarv)\|^2\right] 
%         \ &\leq\ F(\bbeta) - F(\betaavg) + \left(\Eb{h(\bPhi(\lambdarv)\betasp, \lambdarv) - h(\btheta^*(\lambdarv), \lambdarv)}\right)
%      \\
%         \ &\leq F(\bbeta) - F(\betaavg) + \epsilonsp^\ast.
%     \end{align*}
% \end{proof}

\begin{proof}[Proof for \Cref{thm:exact1}]
Our strategy will be to invoke \cref{lma:satisfyWGC} and apply \cref{lma:convergeWGC}, in conjunction with \cref{thm:decompose}.  To that end, notice that the upper bound on the step-size parameter $\bar \eta$ given in \cref{lma:convergeWGC} becomes $\frac{CL}{c\mu} \geq 1$.  Hence, we need only constrain $0 < \bar \eta \leq 1$.

Applying \cref{lma:convergeWGC} yields
    \[
        \Eb{\|\bbeta_T - \betaavg\|^2}
    \ \leq\ 
        \left(1 - \bar\eta \frac{c\mu}{CL}\right)^T\|\bbeta_0 - \betaavg\|^2 + \frac{2\bar \eta \epsilonsp^\ast}{c\mu}.
    \]
    Taking the expectation of \cref{thm:decompose} on both sides and substituting the above, we have 
    \begin{align}
        \Eb{\epsilonsp(\bbeta_T)}
    \ &\leq\ 
        2CL\Eb{\|\bbeta_T - \betaavg\|^2} + \frac{8CL}{c\mu}\epsilonsp^\ast \notag \\
    \ &\leq\ 
        2CL\left(1 - \bar \eta \frac{c\mu}{CL}\right)^T\|\bbeta_0 - \betaavg\|^2 
        + 4\frac{\bar \eta CL}{c \mu}\epsilonsp^\ast
        + \frac{8CL}{c\mu}\epsilonsp^\ast. \label{eqn:expected_bound_appendix}
    \end{align}
    Rearranging yields the first result.
Next, we prove the two bounds of the iteration complexity.  

First, consider the case $\epsilonsp^\ast > 0$.  Our goal will be to choose $T$ large enough to drive the first term on the right side of \eqref{eqn:expected_bound_appendix} to below $4\frac{CL}{c\mu} \bar\eta \epsilonsp^\ast$.  To simplify exposition, let $\kappa = \frac{CL}{c\mu}$.  Then,  we need to ensure that 
\[
    T \log(1-\bar\eta/\kappa) \leq \log\left(\frac{2\kappa \bar\eta \epsilonsp^\ast}{CL \| \bbeta_0 - \betaavg\|^2}\right).
\]
We can upper bound the right side using the identity $\log(1+x) \leq x$.  Hence, it would be sufficient if 
\[
T \geq \frac{\kappa}{\bar \eta  }\log\left(\frac{CL \| \bbeta_0 - \betaavg\|^2}{2\kappa \bar\eta \epsilonsp^\ast}\right).
\]
Replacing $\kappa$ with its definition yields the first case.  

% For the second case where $\epsilonsp^\ast = 0$, we follow a similar argument but seek to drive the error to below $\epsilon$.  We omit the details.

For the second case where $\epsilonsp^\ast = 0$, \eqref{eqn:expected_bound_appendix} reduces to
    \begin{align*}
        \Eb{\epsilonsp(\bbeta_T)}
    \ &\leq\ 
        2CL\left(1 - \bar \eta \frac{c\mu}{CL}\right)^T\|\bbeta_0 - \betaavg\|^2.
    \end{align*}
Given $\epsilon > 0$, we need to ensure that 
\[
    T \log(1-\bar\eta/\kappa) \leq \log\left(\frac{\epsilon}{2CL \| \bbeta_0 - \betaavg\|^2}\right).
\]
Again using $\log(1+x) \leq x$, it would be sufficient if 
\[
T \geq \frac{\kappa}{\bar \eta  }\log\left(\frac{2CL \| \bbeta_0 - \betaavg\|^2}{\epsilon}\right).
\]
Replacing $\kappa$ with its definition yields the second case.  

\end{proof}

\subsection{Proofs from \cref{sec:SpecializedResults}}

\begin{proof}[Proof of \cref{lem:CpcpLegendre}.]
As described in the main text, $\bPhi(\lambda) \in \mathbb R^{d \times qd}$ is block-diagonal with $d$ blocks of size $1 \times q$.  Denote this block by $\psi(\lambda) \in \mathbb R^{1 \times q}$ and note the elements of $\psi(\lambda)$ are precisely the first $q$ Legendre polynomials evaluated at $\lambda$.

Now the matrix $\Phi(\lambda)^\top \Phi(\lambda) \in \mathbb R^{qd \times qd}$ is also block diagonal with $d$ copies of the matrix $\psi(\lambda)^\top \psi(\lambda)$.  As a consequence, the eigenvectors of $\Phi(\lambda)^\top\Phi(\lambda)$ are the stacked copies of the eigenvectors of $\psi(\lambda)^\top\psi(\lambda)$ with the same eigenvalues.

Since $\psi(\lambda)^\top\psi(\lambda)$ is a rank-one matrix, it has at most one non-zero eigenvalue, and by inspection, this eigenvalue is $\| \psi(\lambda)\|^2$ with eigenvector $\psi(\lambda)^\top$.  Then, 
\begin{align*}
C_p = \sup_{\lambda \in[-1, 1]} \| \psi(\lambda)\|^2 = q,
\end{align*}
because the Legendre polynomials achieve their maxima at $1$ with a value of $1$.

By a nearly identical argument, we can see that 
\begin{align*}
c_p &= \sigma_{\min}\left\{ \Eb{\psi(\lambdarv)^\top \psi(\lambdarv)}\right\}
\\
& = \min_{n=0, \ldots, q-1} \frac{2}{2n + 1} 
\\ 
&= 
\frac{2}{2q -1},
\end{align*}
because by the orthogonality of the Legendre polynomials, the above matrix is diagonal.  Hence \[C_p/c_p \ =\ \frac{2q^2- q}{2} \leq q^2.\]
\end{proof}

\begin{proof}[Proof for \cref{lma:chebyshev}]
Our proof is constructive and we will show a slightly stronger result.  We will approximate $\btheta^*_i(\lambda)$ by its Chebyshev truncation up to degree $q$ for each $i$.  Letting $\bar\bbeta$ be the coefficients corresponding to the resulting polynomials, we will show that $\epsilonsp(\bar\bbeta)$ satisfies the bound in \cref{lma:chebyshev}, which implies that $\epsilonsp^\ast$ also satisfies the bound.

% To see this result, we first show that this decay rate holds for the Chebyshev polynomial basis using a result from \citep{trefethen2009approximation}. Since polynomial bases of the same degree are equivalent to each other up to coefficients, the best-in-class interpolation error of Chebyshev polynomials can stand for all polynomial bases.

Let $T_n(\lambda)$ denote the $n^\text{th}$ Chebyshev polynomial of the first kind. For the $i^\text{th}$ dimension of the solution path, let $a_{i,n}$ denote the coefficient of $T_n$ in the Chebyshev truncation of $\btheta_i^\ast$ up to degree $q$.

% Since $\btheta^{(\nu)*}_i(\lambda)$ is Lipschitz continuous with constant $V$, it also has bounded variation $V$. 
We assumed that $\btheta^{*}_i(\lambda)$, ..., $\btheta^{(\nu-1)*}_i(\lambda)$ are absolutely continuous and $\btheta^{(\nu)*}_i(\lambda)$ has bounded variation $V$. When $\nu = 0$, we are assuming simply that $\btheta^{*}_i(\lambda)$ is of bounded variation $V$.
Thus, for any $q \geq \nu + 1$ and $i\in [d]$, \citet[Theorem 7.2]{trefethen2009approximation} guarantees that 
\[
    \sup_{\lambda\in [-1, 1]} \left|\sum_{n=0}^q a_{i,n}T_n(\lambda) - \btheta_i^\ast(\lambda)\right|
~\leq~ 
    \frac{2V}{\pi \nu(q-\nu)^\nu}.
\]
Then,
\[
    \epsilonsp(\bar\bbeta)
~ = ~
    \sup_{\lambda\in \Lambda}\left\{h\left(\left(\sum_{n=0}^q a_{i,n}T_n(\lambda)\right)_{i=[d]}, \lambda\right) - h(\btheta^*(\lambda), \lambda)\right\}.
\]
Moreover, smoothness of $h(\cdot, \lambda)$ ensures that for any $\lambda \in \Lambda$,
\[
    h\left(\left(\sum_{n=0}^q a_{i,n}T_n(\lambda)\right)_{i=[d]}, \lambda\right) - h(\btheta^*(\lambda), \lambda)
~\leq~
    \frac{L}{2}\left\|\left(\sum_{n=0}^q a_{i,n}T_n(\lambda)\right)_{i=[d]} - \btheta^*(\lambda)\right\|^2.
\]
Combine the above and using the definition of $l_2$-norm, we deduce that 
\begin{align*}
    \epsilonsp(\bar\bbeta)
&~\leq~
    \sup_{\lambda\in \Lambda}\left\{\frac{L}{2}\left\|\left(\sum_{n=0}^q a_{i,n}T_n(\lambda)\right)_{i=[d]} - \btheta^*(\lambda)\right\|^2\right\}\\
&~\leq~
    \sup_{\lambda\in [-1, 1]}\left\{\frac{dL}{2}\cdot \sup_{i\in [d]} \left|\sum_{n=0}^q a_{i,n}T_n(\lambda) - \btheta_i^\ast(\lambda)\right|^2\right\}\\
&~\leq~
    \frac{dL}{2}\left(\frac{2V}{\pi \nu(q-\nu)^\nu}\right)^2.
\end{align*}

We note that this result is presented and holds for any $\nu \geq 0$.  (When $\nu = 0$, the bound is infinity and hence trivially valid.)  As will be seen in the proof of \cref{thm:SGDnuDifferentiable} below, however, we will only apply the result when $\nu \geq 2$.

\end{proof}

%%%%%%%%%%%%%%%%%%%%%%
%%%%%%%%%%%%%%%%%%%%%%
%%%%%%%%%%%%%%%%%%%%%%
\begin{proof}[Proof of  \cref{thm:SGDnuDifferentiable}]
As we are only interested in asymptotic behavior as $\epsilon\rightarrow 0$, we will often suppress any constants that do not depend on $\epsilon$ below.  In particular, we will write $a \lsim b$ whenever there exists a constant $C$ (not depending on $\epsilon$ but perhaps depending on $h$ and $\Lambda$) such that $a \leq Cb$.

Note that in order to attain a small solution path error, we will need to use a large number of polynomials $q$. Based on \cref{thm:exact1}, our first goal is to choose $q$ large enough that
\begin{align} \label{goal_q}
\epsilon \ \geq \ \frac{8L(\bar \eta + 1)}{\mu} \frac{C_q}{c_q} \cdot \epsilonsp^\ast(q).
\end{align}

First observe that \cref{lma:chebyshev} establishes that the solution path error is at most 
\begin{align} \label{eq:AsymptoticBoundEps_nudiff}
    \epsilonsp^\ast(q) \ \leq \ \frac{ 4 dL V^2}{2 \pi^2 \nu^2 (q - \nu)^{2\nu}} \ \lsim q^{-2 \nu}.
\end{align}
Using \cref{lem:CpcpLegendre}, we can upper bound the right side of \cref{goal_q} by 
\[
  \frac{8L(\bar \eta + 1)}{\mu} \frac{C_q}{c_q} \cdot \epsilonsp^\ast(q)
  \lsim q^{2(1-\nu)}.
\]
Hence it suffices to take 
\[
    %\frac{\epsilon}{A_1}  \geq  q^{2(1-\nu)} \iff 
    \epsilon^{-\frac{1}{2 (\nu -1)}} \lsim q
\]
polynomials to achieve our target error.  

We now seek to bound the iteration count. By \cref{thm:exact1}, the iteration count should exceed
\begin{align*}
\frac{L}{\bar \eta \mu} \frac{C_q}{c_q} \log\left( \frac{c_q \mu \| \bbeta^*_{\rm avg}\|^2}{2 \bar \eta \epsilonsp^\ast}\right)
\ \lsim \
q^2 \left(\log \| \betaavg \|^2 + \log(q ) \right)
\end{align*}
where we have assumed SGD was initialized at $\bbeta = 0$ and used \cref{eq:AsymptoticBoundEps_nudiff} and  $c_q = \frac{2}{2q -1}$ (cf. the proof of \cref{lem:CpcpLegendre}) to simplify.

To complete the proof we need to bound $\| \betaavg \|^2$.  To this end, we first bound $\| \bar \bbeta \|$, where $\bar\bbeta$ is constructed by approximating  each component $\theta_i^*(\lambda)$ by its Chebyshev truncation to degree $q$ for each $i$.

Then,
\begin{align*}
\| \bar \bbeta \|^2 &= \sum_{i=1}^d \sum_{k=1}^q \abs{a_{ik}}^2 
\\
& = \sum_{i=1}^d \sum_{k=1}^\nu \abs{a_{ik}}^2 + \sum_{i=1}^d \sum_{k=\nu+1}^q \abs{a_{ik}}^2.
\\
& \lsim 
\sum_{k=\nu+1}^q \frac{1}{k^{2(\nu + 1)}},
\\
& \lsim 
\sum_{k=\nu+1}^\infty \frac{1}{k^{2(\nu + 1)}},
\end{align*} 
where the penultimate inequality collects constants and invokes \citet[Theorem 7.1]{trefethen2009approximation} to bound the second summation. Note, for $\nu \geq 0$, this last sum is summable, so that $\|\bar \bbeta\| \lsim 1$.

Furthermore, from the proof  of \cref{lma:chebyshev}, $\epsilonsp(\bar\bbeta) \lsim q^{-2\nu}$.  Hence, since the solution-path error upper bounds the stochastic error, we also have $F(\bar\bbeta) - F(\betaavg) \lsim q^{-2\nu}$.  From strong convexity, this implies that $\| \bar \bbeta- \betaavg \|  \ \lsim \ q^{-\nu}$.

Putting it together, we have that 
\[
    \| \betaavg\| \ \leq \ \| \betaavg - \bar \bbeta \| + \| \bar\bbeta \|\ \lsim\ q^{-\nu} + 1\ \lsim\ 1.
\]

Substituting above shows that it suffices to take $O(q^2 \log q)$ iterations as $\epsilon \rightarrow 0$.  Given our previous calculation of $q$, this amount to $O(\bar\epsilon^{\frac{1}{1-\nu}} \log(1/\bar\epsilon))$ iterations.

\end{proof}

%%%%%%%%%%%%%%%%%%%%%%
%%%%%%%%%%%%%%%%%%%%%%
%%%%%%%%%%%%%%%%%%%%%%

\begin{proof}[Proof for \cref{lma:analytic}]
The proof is very similar to the proof of \cref{lma:chebyshev}, with the only difference being that we use Theorem 8.2 from \citep{trefethen2009approximation} instead of Theorem 7.2. 

Recall that a Bernstein ellipse $\mathcal E_\omega$ with radius $\omega$ in the complex plain is the ellipse
\[
    \mathcal E_\omega = \left\{z = \frac{1}{2}\left(\omega e^{\theta\sqrt{-1}} + \omega^{-1} e^{-\theta\sqrt{-1}}\right) : \theta \in [0, 2\pi)\right\}.
\]
Since $\theta^*_i(\lambda)$ is analytic, there exists an analytic continuation of $\theta^*_i(\lambda)$ to a Bernstein ellipse of radius $\omega_i > 1$.  The value of $\omega_i$ generally depends on if $\theta^*_i(\lambda)$ has any singularities in the complex plane, but is guaranteed to be larger than $1$.
%%VG:  This is a bit of a bold conjecture from me based on my interpretation of this tack exchange:
%https://math.stackexchange.com/questions/3239654/extension-of-function-analytic-on-1-1
%Notice the closed interval is necessary.  There are counter examples if you use the open interval.
  Let $M_i$ be $\max_{z \in \mathcal E_{\omega_i}} \abs{\theta^*_i(z)}$.  Finally, let $\omega \equiv \min_{i=1, \ldots d} \omega_i > 1$ and $M = \max_{i=1, \ldots, d} M_i < \infty$. 

Now, define $T_n(\lambda)$ and $a_{i,n}$ as in the proof for \cref{lma:chebyshev}.

For any $q > 0$ and $i\in [d]$, Theorem 8.2 in \citep{trefethen2009approximation} guarantees that 
\[
    \sup_{\lambda\in [-1, 1]} \left|\sum_{n=0}^q a_{i,n}T_n(\lambda) - \btheta_i^\ast(\lambda)\right|
~\leq~ 
    \frac{2M\omega^{-q}}{\omega - 1}.
\]
Plug this result into the last inequality of the proof for \cref{lma:chebyshev}, we obtain
\begin{align*}
    \epsilonsp(\bar\bbeta)
~\leq~
    \sup_{\lambda\in [-1, 1]}\left\{\frac{dL}{2}\cdot \sup_{i\in [d]} \left|\sum_{n=0}^q a_{i,n}T_n(\lambda) - \btheta_i^\ast(\lambda)\right|^2\right\}
~\leq~
    \frac{dL}{2}\left(\frac{2M\omega^{-q}}{\omega - 1}\right)^2.
\end{align*}

\end{proof}

\begin{proof}[Proof of \cref{thm:SGDforAnalytic}.]
The proof is quite similar to the proof of \cref{thm:SGDnuDifferentiable}.  Again, we only consider asymptotic behavior as $\epsilon \rightarrow 0$ and suppress all other constants. Hence, $ a\lsim b$ means there exists a constant $C$ (not depending on $\epsilon$) such that $ a \leq Cb$. 

Again, our first goal is to identify a $q$ sufficiently large to achieve our target error.  By \cref{thm:exact1} we seek $q$ large enough that 
\[
    \epsilon \geq \frac{8L(\bar \eta + 1)}{\mu} \frac{C_q}{c_q} \epsilonsp^*(q).
\]
\Cref{lma:analytic} shows that there exists an $\omega > 1$ such that
\[
    \epsilonsp^\ast(q) \ \lsim \ \omega^{-2q},
\]
hence it suffice to take $q$ large enough that 
\[
    q^2 \omega^{-2q} \lsim \bar\epsilon.
\]
Solving this equation exactly requires the Lambert-W function.  Instead, we take $q = \frac{\log(1/\bar\epsilon)}{\log \omega}$.  Then, 
\[
    q^2 \omega^{-2q} = \frac{\epsilon ^2 \log ^2\left(\frac{1}{\epsilon }\right)}{\log ^2(\omega)} \lsim \bar\epsilon
\]
for $\bar\epsilon$ sufficiently small.  

Again, to bound the number of iterations, we will need to bound $\| \betaavg - \bbeta_0\|$.  We again will assume that $\bbeta_0 = \mathbf 0$, and first consider bounding $\bar \bbeta$.  Recall, $\bar\bbeta$ is obtained by approximating each component $i$ by the Chebyshev truncation of $\btheta^*(\lambda)$ to degree $q$. Then, 
\begin{align*}
\| \bar\bbeta\|^2  &= \sum_{i=1}^d \sum_{k=1}^q \abs{a_{ik}}^2 
\\ 
& \ \lsim \ 
\sum_{k=1}^q \omega^{-k}
\\
& \ \leq \sum_{k=1}^\infty \omega^{-k}.
\end{align*}
Here, the second to last inequality uses \cite[Theorem 8.1]{trefethen2009approximation}.  Since the last summation is summable, we again conclude that $\| \bar \bbeta \|_2 \lsim 1$.

The proof of \cref{lma:analytic} establishes that $\epsilonsp(\bar\bbeta)  \lsim \omega^{-2q}$.  Since solution path error upper-bounds the stochastic error, we conclude that $F(\bar \bbeta) - F(\betaavg) \lsim \omega^{-2q}$, and by strong convexity, $\| \bar\bbeta - \betaavg\| \lsim \omega^{-q}$.

Putting it together, we have that
\[
    \| \betaavg\| \leq \ \|\betaavg - \bar\bbeta\| + \|\bar\bbeta \|\ \lsim\ \omega^{-q} + 1\ \lsim\ 1.
\]

Substituting into the iteration complexity shows that it suffices to take
\begin{align*}
q^2 \left(\log \| \betaavg \| + \log q\right)
& \lsim 
\log^2(1/\bar\epsilon) \log\log(1/\bar\epsilon)
\end{align*}
iterations, by using our condition on $q$.
\end{proof}

\section{Implementation Details}
\subsection{Calibrating Uniform Discretization}\label{sec:fixScheduleNGS}

To apply uniform discretization in practice one must decide i) the number of grid points to use, and ii) the number of gradient calls to make at each grid point. Loosely speaking, our approach to these two decisions is to first fix a set of desired ``target" solution path errors denoted $\Delta$.  Then, for each $\delta \in \Delta$, we determine the number of grid points and gradient calls to approximately achieve a solution path error of $\delta$.

More specifically, \citet{ndiaye2019safe}
suggests that to achieve a solution path error of $\delta$, we require $O(1/\sqrt{\delta})$ grid points.  %We adopt this guideline for both the one-dimensional example and the two-dimensional example that we consider. For each experiment, we first fix a list of target solution path errors, denoted as $\Delta$. 
Thus, for each $\delta \in \Delta$, we construct a uniform discretization with $1/\sqrt{\delta}$ grid points across every dimension of the hyperparameter space. We denote this grid by $G(\delta)$.  Recall that deterministic gradient descent requires $O(\log(1/\delta))$ steps to achieve an error of $O(\delta)$.  Motivated by this result, we use $c_1 \log(c_2/\delta)$ gradient calls per grid point.  The total number of gradient calls is thus 
 $c_1 \log(c_2/\delta)/\sqrt{\delta}$.

% The decision of (ii) relies on the optimization method we use at each grid. For the weighted binary classification problem, we employ deterministic gradient descent, which converges in $O(\log(1/\delta))$ gradient calls per grid point. For the portfolio allocation problem, we use BFGS, which has a convergence rate of $O(\log^{1/(1+\alpha)}(1/\delta))$. In both cases, we heuristically execute the optimization algorithm for $c_1 \log(c_2/\delta)$ gradient calls.  

The specific values of $c_1$, $c_2$, and $\Delta$ used in our experiments are recorded in \cref{tbl:parameters_for_numerics}.
\begin{table}[hbt!]
\centering
\begin{center}
\begin{tabular}{ |c|c|c|c| } 
 \hline
Weighted Binary Classification & $c_1 = 1$ & $c_2 = 0.5$ & $\Delta = \{2^{-6}, 2^{-6.5}, 2^{-7}, \ldots, 2^{-18}\}$ \\ 
\hline
 % Weighted Binary Classification ($w(\lambda) = \lambda/(1+\lambda)$) & $c_1 = 2$ & $c_2 = 1$ & $\Delta = \{2^{-6}, 2^{-6.5}, 2^{-7}, \ldots, 2^{-18}\}$ \\ 
 % \hline
 Portfolio Allocation & $c_1 = 0.65$ & $c_2 = 1$ & $\Delta = \{4^{-2}, 5^{-2}, \ldots, 19^{-2}\}$ \\ 
 \hline
\end{tabular}
\caption{{\bf Parameters Used in Experiments.}}
\label{tbl:parameters_for_numerics}
\end{center}
\end{table}

We next argue that this procedure with these constants is roughly efficient. Define the \emph{grid pass error} as 
\[
\sup_{\lambda \in G(\delta)} h(\hat{\btheta}(\lambda), \lambda)- h(\btheta^*(\lambda), \lambda).
\]
For comparison, the solution-path error is
\[
\epsilonsp(\hat{\btheta}(\cdot)) = \sup_{\lambda \in \Lambda} h(\hat{\btheta}(\lambda), \lambda)- h(\btheta^*(\lambda), \lambda).
\]
Since $G(\delta) \subseteq \Lambda$, the grid pass error is always less than the solution path error.

\begin{figure}[hbt!]
     \centering
     \begin{subfigure}[b]{0.43\textwidth}
         \centering
         \raisebox{0cm}{
         \includegraphics[width=\textwidth]{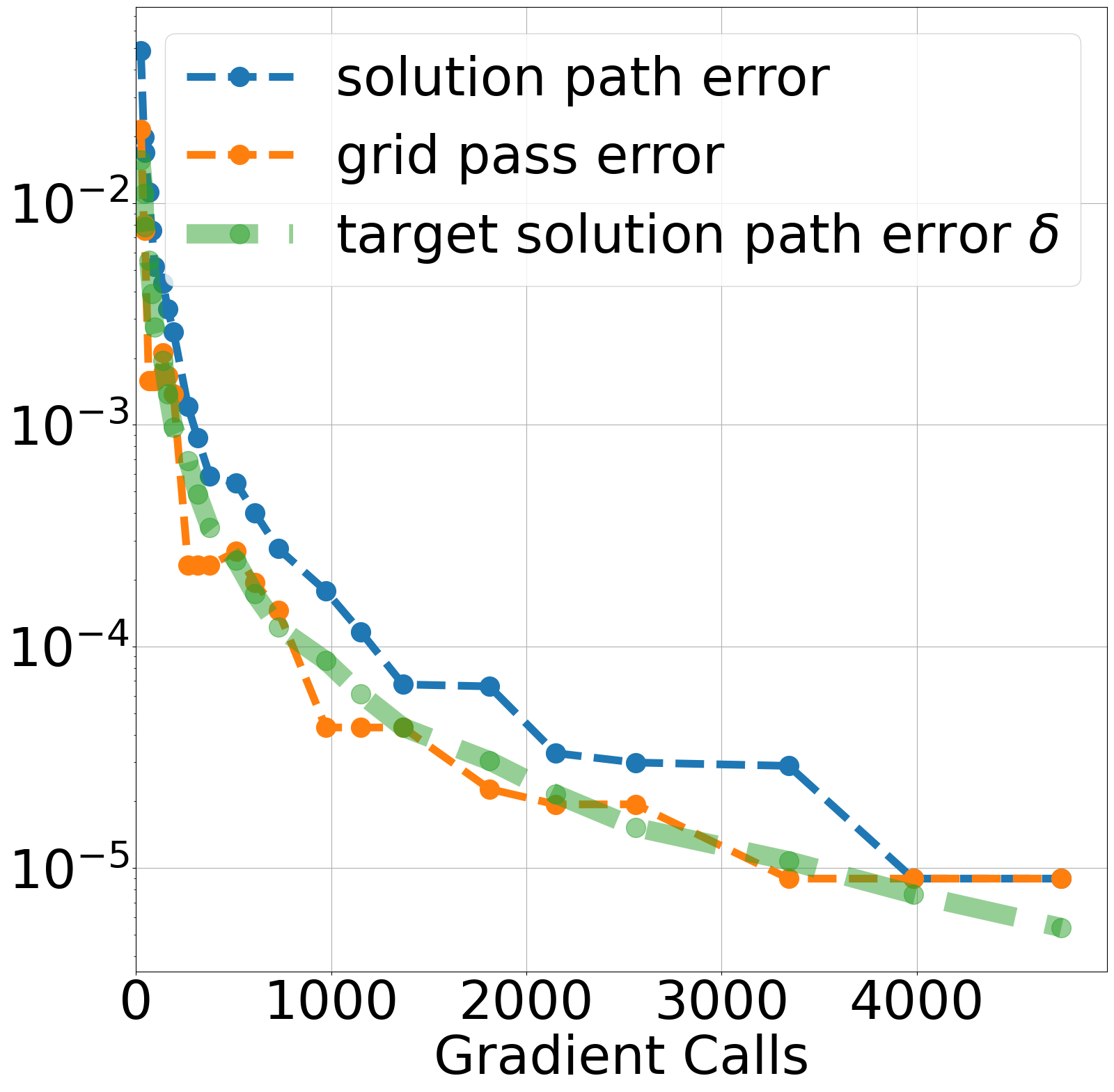}}
         \caption{}
         \label{fig:NGS_stopping_criterion_exact_legendre}
     \end{subfigure}
     \hfill
     \begin{subfigure}[b]{0.47\textwidth}
        % \vspace{1.5cm}
         \centering
         \includegraphics[width=\textwidth]{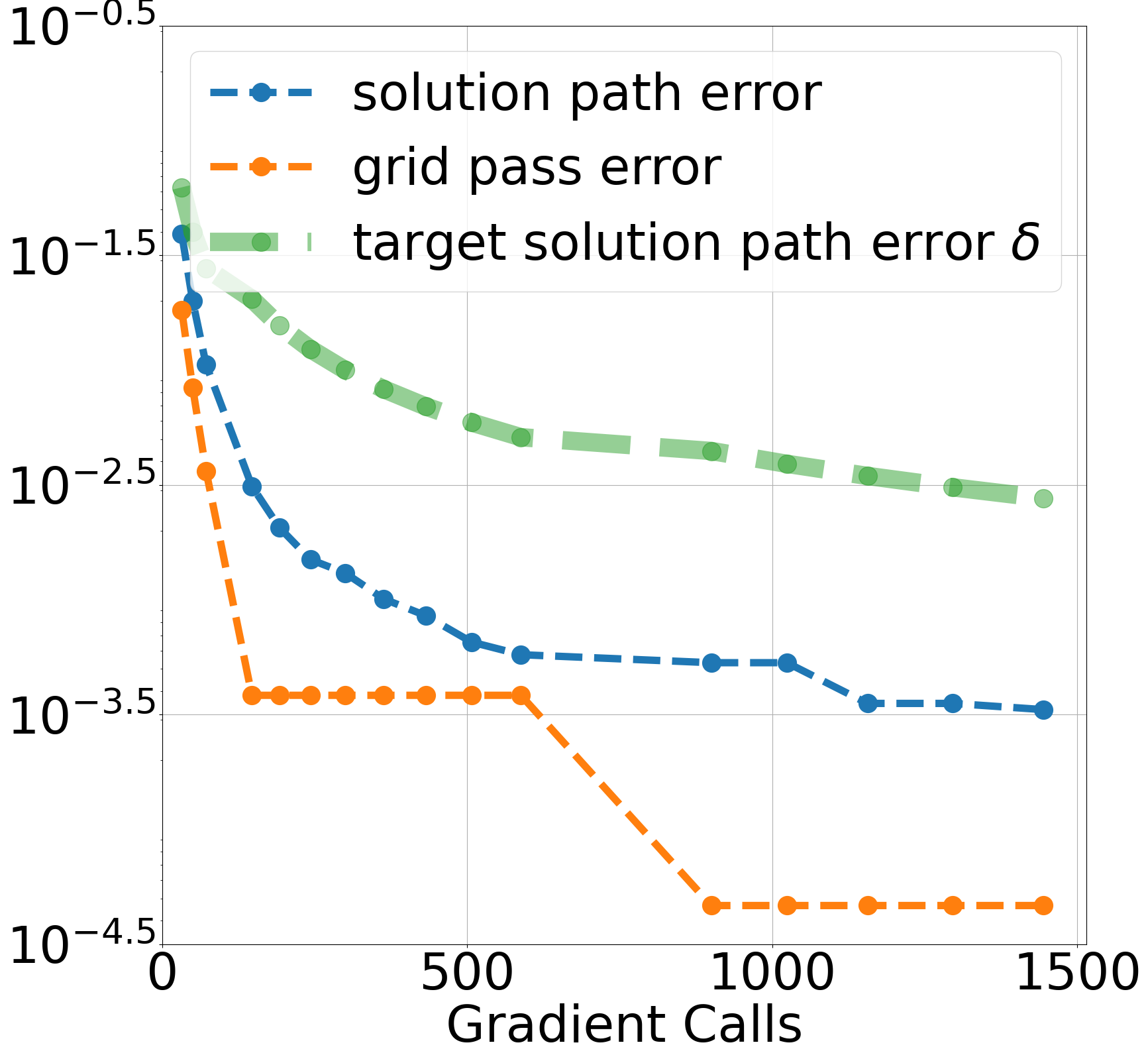}
         \caption{}
         \label{fig:NGS_stopping_criterion_exact_2d}
     \end{subfigure}
        \caption{{\bf Compare: Ideal Stopping Criterion, Grid Pass Error, and Solution Path Error.} (a) weighted binary classification; (b) portfolio allocation. Solution path error and grid pass error are plotted against the corresponding total number of gradient calls incurred in practice. Target solution path error $\delta$ is plotted against $c_1\log(c_2/\delta)/\sqrt{\delta}$, which also equals the total number of gradient calls incurred in practice. Each discrete point in the line plots above corresponds to a discretization grid point $\delta \in G(\delta)$; thus smaller $\delta$ is associated with more gradient calls.}
        % \label{fig:three graphs}
        \label{fig:stoppingCriterionNGS}
\end{figure}

By comparing the grid pass error, the solution path error, and the target solution path error, we can calibrate the amount of work done at each grid point and the total number of grid points.  Specifically, if the grid pass error is much smaller than the solution pass error, it suggests we have allocated too much work to gradient calls at grid points and have insufficient grid points.   On the other hand, if the grid pass error is very far from the target solution path error, it suggests we have not allocated enough work to gradient calls at the grid points and have too many grid points.  Based on this trade-off, we tune the  constants $c_1$ and $c_2$.

% are tuned carefully to achieve two main objectives:
% \begin{enumerate}
%     \item the grid pass error should be close to the solution path error, which implies that we are not doing an excessive amount of computations at each grid;
%     \item both the grid pass error and the solution path error should be close to the target solution path error $\delta$, which indicates that we are performing sufficient computations at each grid.
% \end{enumerate}

\cref{fig:stoppingCriterionNGS} compares the solution path error and grid pass error to the target solution path error for our experiments. \cref{fig:NGS_stopping_criterion_exact_legendre} illustrates that in the weighted binary classification experiment, the discretization scheme performs well, satisfying both objectives as the solution path error and grid pass error closely align with the target solution path error.  \cref{fig:NGS_stopping_criterion_exact_2d} demonstrates that for the portfolio allocation problem, both solution path error and grid pass error are still close to each other, but fall below the target solution path error. This is primarily due to the high precision of L-BFGS, though only very few gradient calls are performed at each grid.

\subsection{Basis for Moderate Dimensional Portfolio Allocation}\label{sec:taylorExpansion}

Recall that this experiment uses the following objective function: 
\[
h(\theta, \lambda) \ = \ 
-\lambda_1 \cdot \mu^\top \theta + \lambda_2 \cdot \theta^\top \Sigma \theta + \|\theta - \lambda_{3:12}\|_2^2 ,
\]
where $\lambda_{3:12}$ represents the current holdings.

Since this is a quadratic objective, let us first directly compute the optimal solution path
\[
\btheta^*(\lambda) \ = \ 
\frac{1}{2} \left( \bm I + \lambda_2 \Sigma \right)^{-1} \left( \lambda_1 \mu + 2 \lambda_{3:12} \right).
\]

Consider the eigendecomposition of $\Sigma$, which takes the form \(\Sigma = P^\top D P\) for some orthonormal matrix $P$ and diagonal matrix $D = \text{diag}(d_1, \ldots, d_{10})$.
Plugging into the above closed form optimal solution path yields
\[
\btheta^*(\lambda) \ = \ 
\frac{1}{2} P^\top 
\text{diag}\left(\frac{1}{1 + \lambda_2 d_1}, \ldots, \frac{1}{1+ \lambda_2 d_{10}} \right) 
P \left( \lambda_1 \mu + 2 \lambda_{3:12} \right).
\]

Using the Taylor series expansion for $\frac{1}{1 + a \lambda_2}$ around $0$
\[
\frac{1}{1 + a \lambda_2} =  \sum_{i=0}^\infty (-1)^i (a\lambda_2)^i, \qquad a\in \bbR,
\]
we conclude that, as $q\to \infty$, each component of the optimal solution path $\btheta^*_i(\lambda)$ is in the span of our chosen basis:
\[
\lambda_{(j\bmod 12)} \cdot \lambda_2^{\lfloor{j / 12}\rfloor},
\qquad
 j = 1, ..., q.
\]

\section{Additional Figures}
\label{sec:AdditionalFigures}
\begin{figure}[H]
     \centering
     \begin{subfigure}[b]{0.46\textwidth}
         \centering
         \raisebox{0cm}{
         \includegraphics[width=\textwidth]{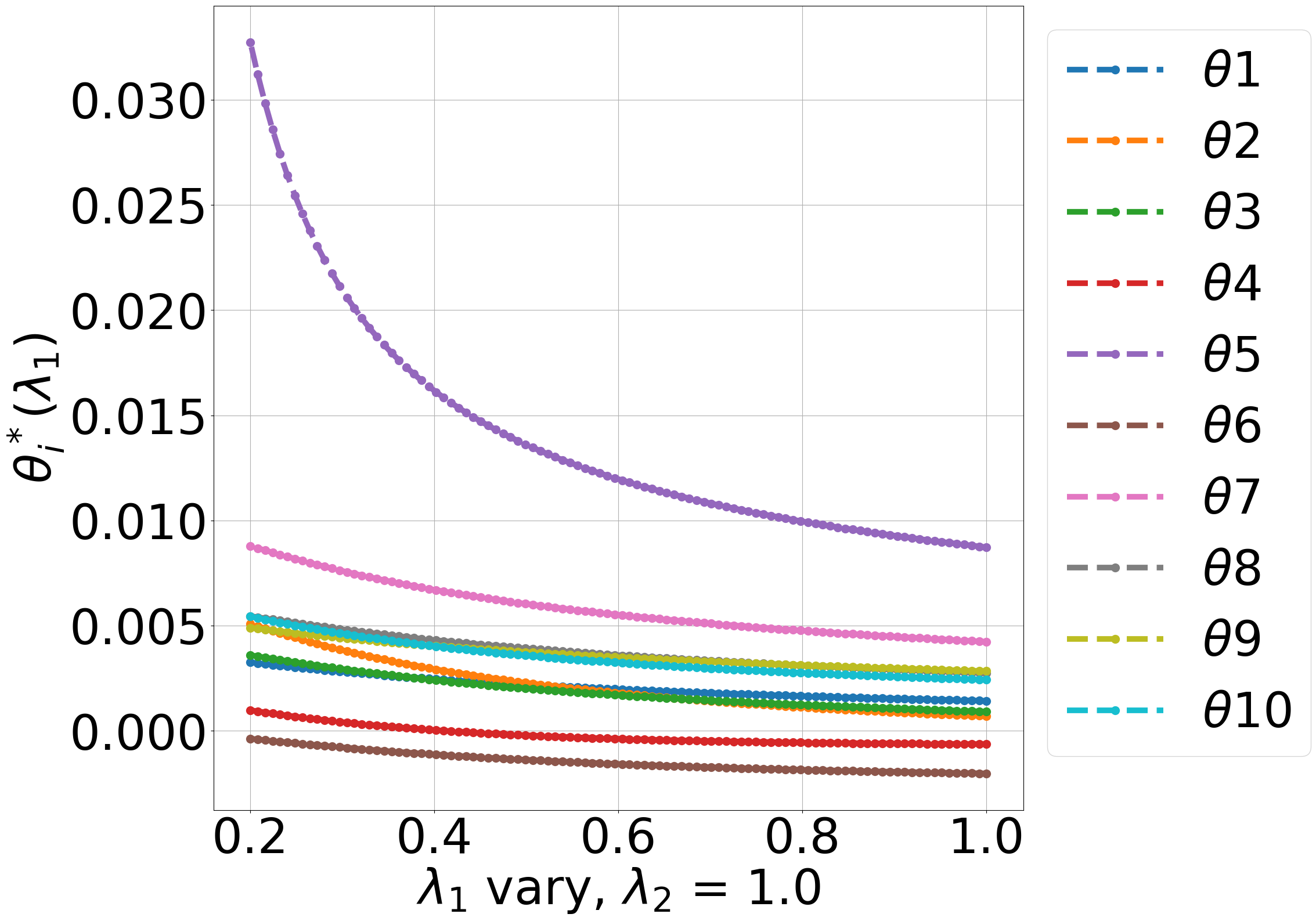}}
         % \caption{}
     \end{subfigure}
\hfill
     \begin{subfigure}[b]{0.44\textwidth}
        % \vspace{1.5cm}
         \centering
         \includegraphics[width=\textwidth]{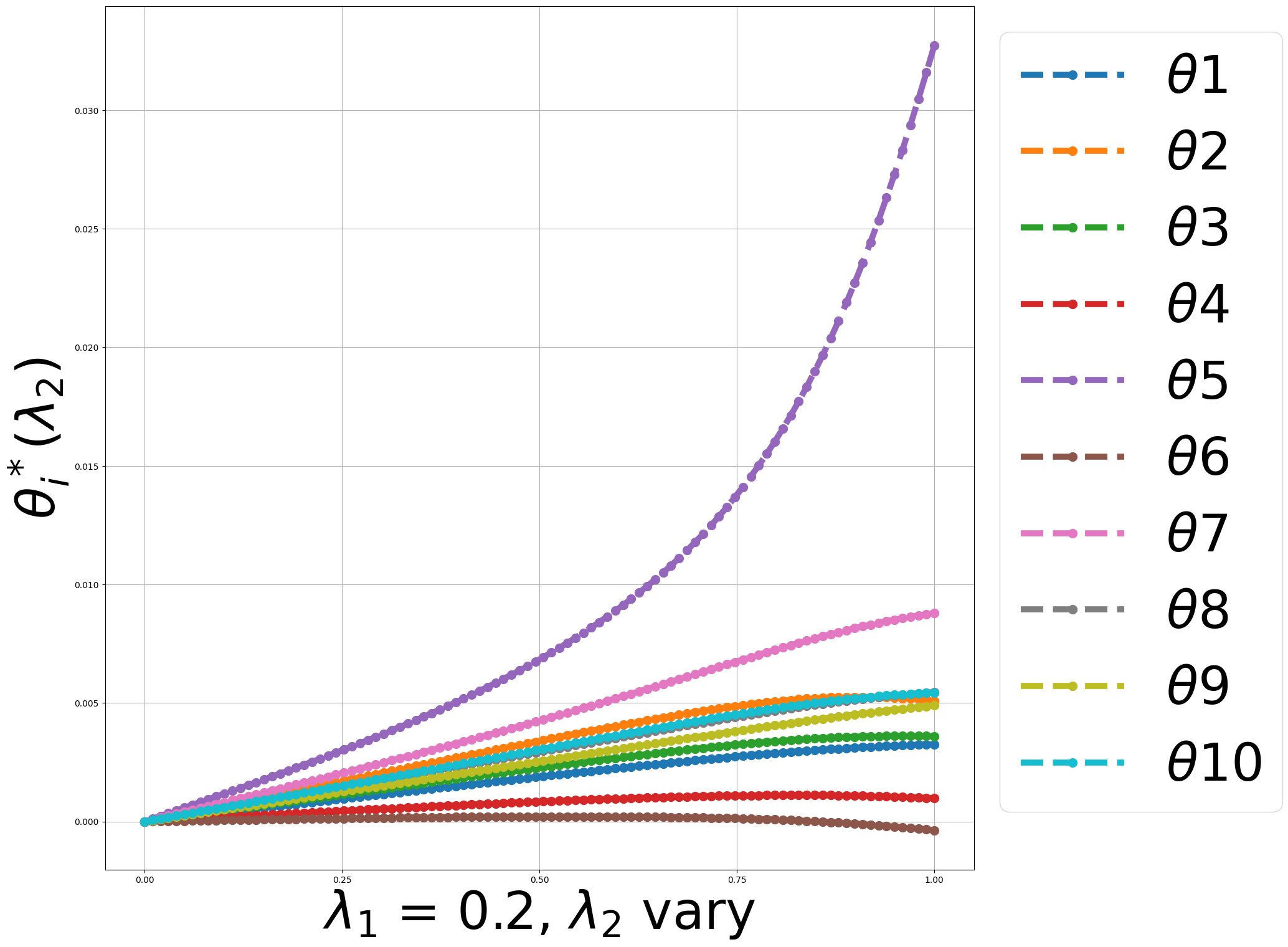}
         % \caption{}
     \end{subfigure}
        \caption{{\bf $\btheta^*(\lambda)$ for Portfolio Allocation from \cref{sec:portfolioAlloc}.} The true solution paths are very smooth in each dimension, suggesting that this problem is highly interpolable by a polynomial basis.}
        % \label{fig:three graphs}
        \label{fig:true_soln_path_2d}
\end{figure}

\begin{figure}[hbt!] 
\begin{center}
\includegraphics[width=.7\textwidth]{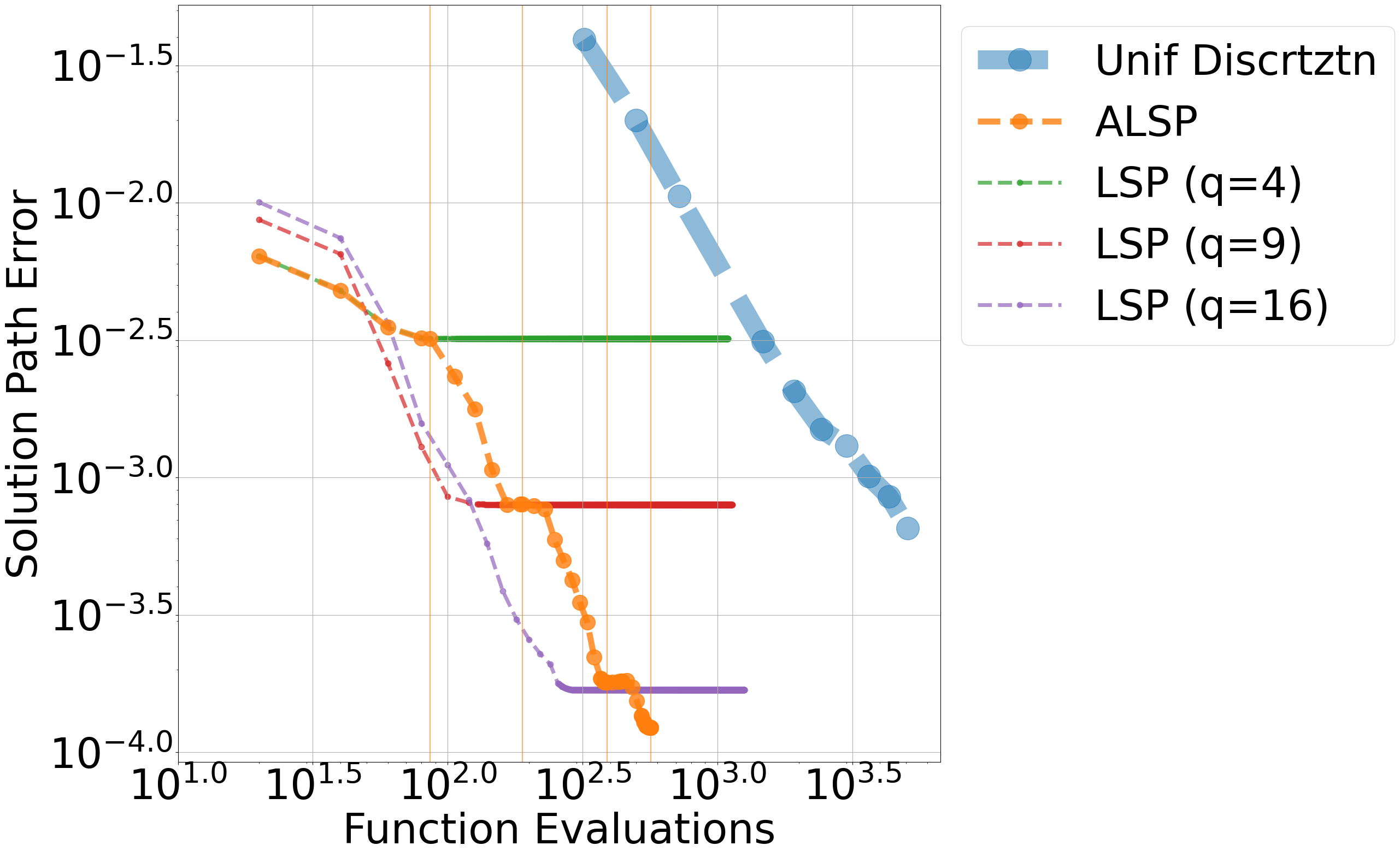}
\end{center}
\caption{{\bf LSP and ALSP for Portfolio Allocation.} Compares methods using the first $q = 4, 9, 16$ bivariate-Legendre polynomials. Differs from \cref{fig:boosted_and_fixed_exact_2d} as the $y$-axis records the number of function evaluations during line-search of L-BFGS instead of gradient calls.
}
\end{figure}

\begin{figure}[hbt!] 
\begin{center}
\includegraphics[width=.7\textwidth]{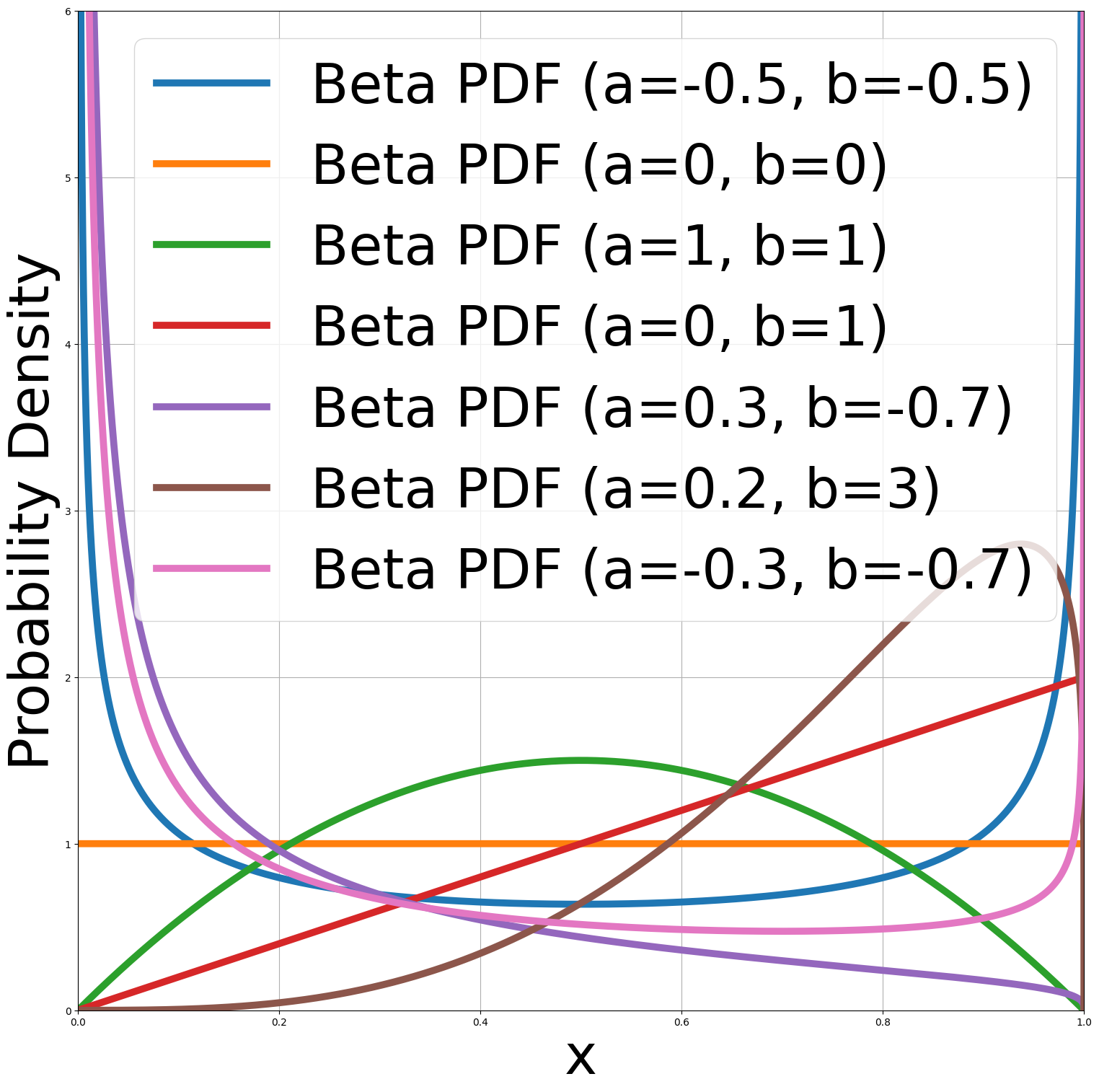}
\end{center}
\caption{{\bf Probability Density Function(PDF) of Beta Distribution over $[0, 1]$ with Various $a$, $b$.}
}
\end{figure}

\end{document}